\journal{Linear Algebra and its Applications}
\newcommand{\inlineitem}[1][]{%
	\ifnum\enit@type=\tw@
	{\descriptionlabel{#1}}
	\hspace{\labelsep}%
	\else
	\ifnum\enit@type=\z@
	\refstepcounter{\@listctr}\fi
	\quad\@itemlabel\hspace{\labelsep}%
	\fi}
\theoremstyle{plain}
\newtheorem{theorem}{Theorem}[section]
\newtheorem{corollary}[theorem]{Corollary}
\newtheorem{proposition}[theorem]{Proposition}
\newtheorem{lemma}[theorem]{Lemma}
\theoremstyle{definition}
\newtheorem{definition}[theorem]{Definition}
\newtheorem{remark}[theorem]{Remark}
\newtheorem{example}[theorem]{Example}
\newcommand{\cU}{\mathcal U}
\newcommand{\cL}{\mathcal L}
\newcommand{\cR}{\mathcal R}
\newcommand{\bx}{\mathbf x}
\newcommand{\by}{\mathbf y}
\newcommand{\bu}{\mathbf u}
\newcommand{\ba}{\mathbf a}
\newcommand{\bb}{\mathbf b}
\newcommand{\bv}{\mathbf v}
\newcommand{\wtA}{\widetilde A}
\newcommand{\wtE}{\widetilde E}
\begin{document}

\begin{frontmatter}

\title{Gram mates, sign changes in singular values, and isomorphism\tnoteref{mytitlenote}}
\tnotetext[mytitlenote]{Funding: This work is supported by a University of Manitoba Graduate Fellowship and by a Discovery Grant from the Natural Sciences and Engineering Research Council of Canada under grant number RGPIN--2019--05408.}


\author{Sooyeong Kim\corref{mycorrespondingauthor}}
\cortext[mycorrespondingauthor]{Corresponding author}
\ead{kims3428@myumanitoba.ca}

\author{Steve Kirkland}
\ead{Stephen.Kirkland@umanitoba.ca}

\address{Department of Mathematics, University of Manitoba, Winnipeg MB, R3T 2N2, Canada}


\begin{abstract}
We study distinct $(0,1)$ matrices $A$ and $B$, called \textit{Gram mates}, such that $AA^T=BB^T$ and $A^TA=B^TB$. We characterize Gram mates where one can be obtained from the other by changing signs of some positive singular values. We classify Gram mates such that the rank of their difference is at most $2$. Among such Gram mates, we further produce equivalent conditions in order that one is obtained from the other by changing signs of at most $2$ positive singular values. Moreover, we provide some tools for constructing Gram mates where the rank of their difference is more than $2$. Finally, we characterize non-isomorphic Gram mates whose difference is of rank $1$ with some extra conditions. 
\end{abstract}

\begin{keyword}
Zero-one matrices\sep Gram mates\sep singular values\sep singular vectors\sep isomorphic
\MSC[2010] 91D30\sep 15A18
\end{keyword}

\end{frontmatter}


\section{Introduction}

The quantitative analysis of social networks is a thriving discipline. A two-mode network in social networks describes the relationships between actors and events. A $(0,1)$ matrix $A$ can represent a two-mode network in a way that rows and columns of $A$ represent actors and events, respectively, with a $1$ in the corresponding position of the matrix if an actor joins an event, and a $0$ otherwise (see \cite{Hanneman:twomode} for introductory material on two-mode networks). One of the basic approaches to the study of two-mode networks is the so-called conversion approach \cite{Borgatti:Conversion}, which analyses the features of A by considering the related (single-mode) matrices $AA^T$ and $A^TA$. However, how can we be assured that the conversion approach retains the information of $A$? In other words, can there be a different $(0,1)$ matrix $B$ such that $AA^T=BB^T$ and $A^TA=B^TB$; further, is it possible that such a $B$ is not obtained from $A$ by permuting rows and columns?

This paper is a study of pairs of $(0,1)$ matrices $A$ and $B$, called \textit{Gram mates}, such that $AA^T=BB^T$, $A^TA=B^TB$, and $A\neq B$. That study arises from a question in \cite{everett:dual} as to whether the conversion approach loses structural features of a two-mode network, and is established mathematically in \cite{Steve:two-mode}. In \cite{everett:dual}, it is shown how to recover a $(0,1)$ matrix $A$ from $AA^T$ and $A^TA$ under certain circumstances, and how non-isomorphic Gram mates $A$ and $B$ can cause `data loss of the information for $A$'. To be clear about our work in this paper, we briefly introduce the way of recovering $A$ from $AA^T$ and $A^TA$, and the speculation in \cite{everett:dual}. For the recovery of $A$ from $AA^T$ and $A^TA$, the singular value decomposition is used as follows: under the assumption that $A$ has distinct positive singular values, for fixed singular vectors of $A$ (which are uniquely determined up to sign), we change some signs of singular values until a $(0,1)$ matrix is obtained from a singular value decomposition, or until no $(0,1)$ matrix is produced. It is speculated in \cite{everett:dual} that there is a very high probability that matrices from this reconstruction are isomorphic.

Kirkland \cite{Steve:two-mode} presents techniques for a systematic study of a pair of Gram mates. One of those techniques is to consider a $(0,1,-1)$ matrix $E$ and to investigate Gram mates $A$ and $A+E$. We use that technique in order to understand the relation between $(0,1)$ matrices $A$ and $B$, where $B$ is obtained from $A$ by changing signs of some positive singular values. Furthermore, regarding the speculation, we provide an infinite family of pairs of non-isomorphic Gram mates with that property.  

In addition to the works motivated by \cite{everett:dual}, we study families of pairs of Gram mates. One of the results of Kirkland \cite{Steve:two-mode} is that given two $(0,1)$ matrices at random of the same large size, the probability that they are Gram mates is `very' small. For that reason, we furnish infinite families of pairs of Gram mates according to the rank of their difference. Moreover, from those families, we give tools to construct other families.

In the present paper, we discuss the following in each section. We characterize matrices from the reconstruction regardless of whether they have all distinct positive singular values in Section \ref{section:Svd} (Theorem \ref{Thm:equivalent conditions} and Corollary \ref{Cor:equivalent conditions for convertible}). Section \ref{section:rank1 and 2} establishes all pairs of Gram mates $A$ and $B$ where the rank of $A-B$ is at most $2$ (Theorems \ref{Thm:Gram mates of rank1}, \ref{Thm:Gram mates rank 21} and \ref{Thm:Grammates Rank22}). Moreover, we provide equivalent conditions for $A$ being obtained from $B$ by changing signs of at most two positive singular values of $A$ (Theorems \ref{Thm:Gram mates of rank1}, \ref{Thm:Gramsingular rank21} and \ref{Thm:Gramsingular rank 22}). In Section \ref{section:construction}, we provide several tools for attaining pairs of Gram mates $A$ and $B$ where the rank of $A-B$ is more than $2$. Section \ref{section:noniso} exhibits families of pairs of non-isomorphic Gram mates $A$ and $B$, with some extra conditions, where the rank of $A-B$ is $1$ (Proposition \ref{Prop:fixable iff row and column sum vectors} and Theorem \ref{Thm:non-isomorphic distinct sv}).

\section{Preliminaries}\label{Subsec:preli Gram}

\begin{definition}\label{Def:GramMates}
	Let $A$ and $B$ be $(0,1)$ matrices. The matrices $A$ and $B$ are \textit{Gram mates} and $A$ is called a \textit{Gram mate to} $B$ if $AA^T=BB^T$, $A^TA=B^TB$ and $A\neq B$.
\end{definition}

\begin{definition}
	Let $A$ and $B$ be $(0,1)$ matrices. The matrices $A$ and $B$ are \textit{isomorphic} if there exist permutation matrices $P$ and $Q$ such that $B=PAQ$.
\end{definition}

It is readily verified that $A$ and $B$ are Gram mates if and only if for any permutation matrices $P$ and $Q$ of the appropriate sizes, $PAQ$ and $PBQ$ are Gram mates; $A$ and $B$ are Gram mates if and only if $A^T$ and $B^T$ are Gram mates. So, those statements are used when we need to simplify some hypotheses of a claim in terms of Gram mates to focus on particular cases without the loss of generality of the claim.

We have a basic observation \cite{Steve:two-mode} that Gram mates $A$ and $B$ must have the same row sum vectors and the same column sum vectors. Thus, both $A-B$ and $B-A$ are $(0,1,-1)$ matrices such that their row sum and column sum vectors are $0$.

\begin{definition}
	Let $E$ be a $(0,1,-1)$ matrix such that $E\mathbf{1}=0$ and $\mathbf{1}^TE=0^T$. The matrix $E$ is said to be \textit{realizable (with respect to Gram mates)} if there is a pair of Gram mates $A$ and $A+E$. We say that $A$ and $B$ are \textit{Gram mates via $E$} if $A$ and $B$ are Gram mates, and either $A-B=E$ or $B-A=E$.
\end{definition}

Evidently, any zero matrix is not realizable. It is easily seen that $E$ is realizable if and only if for any permutation matrices $P$ and $Q$ of the appropriate sizes, $PEQ$ is realizable; $E$ is realizable if and only if $E^T$ is realizable.

We shall introduce some notation in matrix theory. Let $A$ be an $m\times n$ matrix. Let $\alpha\subset\{1,\dots,m\}$ and $\beta\subset\{1,\dots,n\}$. We denote by $A[\alpha,\beta]$ the submatrix of $A$ whose rows and columns are indexed by $\alpha$ and $\beta$, respectively. Let $\alpha^c$ denote the complement of $\alpha$. We denote by $\mathbf{1}_n$ the all ones column vector of size $n$, by $I_n$ the identity matrix of size $n\times n$, and by $J_{n,m}$ the all ones matrix of size $n\times m$. If $k=n=m$, then we denote $J_{n,m}$ by $J_k$. The subscripts of $\mathbf{1}_n$, $I_n$, $J_{n,m}$ and $J_k$ are omitted if their sizes are clear from the context. We also use $\mathbf{0}_n$ to denote the all zeros column vector of size $n$. We write $\mathbf{0}_n$ as $0$ when the meaning is clear from the context. A block diagonal matrix the main diagonal blocks of which are square matrices $A_1,\dots,A_k$ is indicated as $\mathrm{diag}(A_1,\dots,A_k)$. In particular, if all the main diagonal blocks are scalars, it is a diagonal matrix. For a subset $X$ in $\mathbb{R}^n$, we denote the subspace spanned by $X$ as $\mathrm{span}(X)$. We use $\mathrm{Row}(A)$ and $\mathrm{Col}(A)$ to denote the row space of $A$ and the column space of $A$, respectively. The rank of $A$ is denoted as $\mathrm{rank}(A)$. 


The following are used in Section \ref{section:Svd}.

\begin{proposition}\cite{Horn:MatrixAnalysis}\label{Prop:commute with diagonal matrix}
	Let $\ell\geq 1$, and let $D=\mathrm{diag}(d_1I_{k_1},\dots, d_\ell I_{k_\ell})$ where $d_1,\dots,d_\ell$ are distinct and $k_i\geq 1$ for $i=1,\dots,\ell$. Suppose that $A$ commutes with $D$, and $A$ is orthogonal. Then, $A$ is a block diagonal matrix compatible with the partition of $D$ such that each of the main diagonal blocks of $A$ is orthogonal.
\end{proposition}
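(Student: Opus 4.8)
The plan is to exploit the block-scalar structure of $D$ directly. First I would partition $A$ conformally with $D$, writing $A=(A_{ij})$ where each block $A_{ij}$ has size $k_i\times k_j$ and the partition matches that of $D=\mathrm{diag}(d_1 I_{k_1},\dots,d_\ell I_{k_\ell})$. Multiplying on the right by $D$ scales the $j$-th block column by $d_j$, so the $(i,j)$ block of $AD$ is $d_j A_{ij}$; multiplying on the left by $D$ scales the $i$-th block row by $d_i$, so the $(i,j)$ block of $DA$ is $d_i A_{ij}$.

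Next I would invoke the commutation hypothesis $AD=DA$ blockwise. Equating the $(i,j)$ blocks gives $(d_i-d_j)A_{ij}=0$ for all $i$ and $j$. Since the scalars $d_1,\dots,d_\ell$ are distinct, we have $d_i\neq d_j$ whenever $i\neq j$, which forces $A_{ij}=0$ for every off-diagonal pair $i\neq j$. Hence $A=\mathrm{diag}(A_{11},\dots,A_{\ell\ell})$ is block diagonal, compatible with the partition of $D$, as claimed.

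Finally I would use orthogonality. Since $A$ is block diagonal, $A^T A=\mathrm{diag}(A_{11}^T A_{11},\dots,A_{\ell\ell}^T A_{\ell\ell})$. Because $A$ is orthogonal, $A^T A=I$, and comparing diagonal blocks yields $A_{ii}^T A_{ii}=I_{k_i}$ for each $i$. As each $A_{ii}$ is square, this shows that every diagonal block is orthogonal, completing the argument.

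I do not anticipate a serious obstacle here: the entire content is the blockwise commutation computation, and the only point requiring care is the bookkeeping of which scalar ($d_i$ or $d_j$) multiplies a given block when $D$ acts from the left versus from the right. The distinctness of the $d_i$ does all the work in annihilating the off-diagonal blocks, and orthogonality of the full matrix transfers to each block automatically once block diagonality is established.
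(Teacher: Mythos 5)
Your argument is correct: the blockwise computation $(d_i-d_j)A_{ij}=0$ together with distinctness of the $d_i$ forces block diagonality, and orthogonality then passes to each square diagonal block. The paper gives no proof of this proposition---it is cited from Horn and Johnson---and your proof is the standard one that would appear there, so there is nothing to reconcile.
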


\begin{proposition}\cite{Strang:LinearAlgebra}\label{rowcolumnspace}
	Let $A$ and $B$ be matrices of compatible sizes for $AB$ to be defined. Then, 
	\begin{align*}
	&\mathrm{Col}(AA^T)=\mathrm{Row}(AA^T)=\mathrm{Col}(A),\; \mathrm{Col}(A^TA)=\mathrm{Row}(A^TA)=\mathrm{Row}(A),\\&\mathrm{Col}(AB)\subseteq\mathrm{Col}(A).
	\end{align*}
	Furthermore, if $AB$ is of rank $k$, then there are $k$ columns $\ba_1,\dots,\ba_k$ of $A$ that comprise a basis of $\mathrm{Col}(AB)$.
\end{proposition}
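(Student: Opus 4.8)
The plan is to separate the statement into three independent pieces: the column-equals-row identities for the symmetric matrices $AA^T$ and $A^TA$, the inclusion $\mathrm{Col}(AB)\subseteq\mathrm{Col}(A)$, and the basis-extraction claim. The first two are short, and I would reduce the $A^TA$ statement to the $AA^T$ statement by applying the latter to $A^T$ in place of $A$, since $(A^T)(A^T)^T=A^TA$ and $(A^T)^T=A$. The equalities $\mathrm{Col}(M)=\mathrm{Row}(M)$ for $M\in\{AA^T,A^TA\}$ are immediate from $M^T=M$, because the rows of $M$ are precisely the columns of $M^T$.

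So the real content of the first line is $\mathrm{Col}(AA^T)=\mathrm{Col}(A)$. One inclusion is a one-liner: every column of $AA^T=A\cdot A^T$ is $A$ times a column of $A^T$, hence lies in $\mathrm{Col}(A)$; the identical observation gives $\mathrm{Col}(AB)\subseteq\mathrm{Col}(A)$ at the same time. For the reverse inclusion I would argue by dimension, using the standard kernel identity $\ker(AA^T)=\ker(A^T)$: the inclusion $\ker(A^T)\subseteq\ker(AA^T)$ is trivial, and if $AA^T\bx=0$ then $\bx^TAA^T\bx=\|A^T\bx\|^2=0$ forces $A^T\bx=0$. By rank--nullity this yields $\mathrm{rank}(AA^T)=\mathrm{rank}(A^T)=\mathrm{rank}(A)=\dim\mathrm{Col}(A)$, and a subspace contained in $\mathrm{Col}(A)$ of the same dimension must equal it.

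For the final claim I would first note $\mathrm{rank}(A)\geq\mathrm{rank}(AB)=k$, so $A$ has at least $k$ linearly independent columns; extracting a maximal linearly independent subset of the columns of $A$ gives columns $\ba_1,\dots$ forming a basis of $\mathrm{Col}(A)$. Since $\mathrm{Col}(AB)\subseteq\mathrm{Col}(A)$ with $\dim\mathrm{Col}(AB)=k$, in the situation where this basis has exactly $k$ members — equivalently $\mathrm{rank}(A)=k$ — the two column spaces coincide, and then $\ba_1,\dots,\ba_k$ comprise a basis of $\mathrm{Col}(AB)$, as claimed.

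The step I expect to be the main obstacle is precisely this last one: columns of $A$ need not lie in $\mathrm{Col}(AB)$ unless $\mathrm{Col}(AB)=\mathrm{Col}(A)$, so the conclusion genuinely needs $\mathrm{rank}(A)=\mathrm{rank}(AB)=k$ (when $\mathrm{rank}(A)>k$, the product $AB$ spans a strictly smaller space and no $k$ columns of $A$ can sit inside it). I would therefore read the hypothesis as carrying $\mathrm{rank}(A)=\mathrm{rank}(AB)$, which is the regime in which this proposition is invoked; under that reading the extraction argument above closes immediately, and the preceding inclusion $\mathrm{Col}(AB)\subseteq\mathrm{Col}(A)$ together with the dimension count supplies the equality $\mathrm{Col}(AB)=\mathrm{Col}(A)$ needed to make $\ba_1,\dots,\ba_k$ a basis of $\mathrm{Col}(AB)$.
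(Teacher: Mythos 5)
The paper offers no proof of this proposition --- it is quoted from \cite{Strang:LinearAlgebra} --- so there is no internal argument to compare against and your write-up has to stand on its own. For the displayed identities it does: reducing the $A^TA$ claim to the $AA^T$ claim by substituting $A^T$ for $A$, the observation that $\mathrm{Col}(M)=\mathrm{Row}(M)$ for symmetric $M$, the one-line inclusion $\mathrm{Col}(AB)\subseteq\mathrm{Col}(A)$, and the dimension count via $\ker(AA^T)=\ker(A^T)$ are all correct and standard (the kernel identity uses that the matrices are real, which is the setting of the paper).

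You are also right that the ``furthermore'' clause is false as literally written: with $A=I_2$ and $B=\begin{bmatrix}1\\1\end{bmatrix}$ the product $AB$ has rank $1$, yet neither column of $A$ lies in $\mathrm{Col}(AB)=\mathrm{span}\{(1,1)^T\}$. Two caveats about your repair, though. First, the parenthetical claim that the conclusion ``genuinely needs'' $\mathrm{rank}(A)=k$ overstates matters: with $A=I_2$ and $B=\begin{bmatrix}1\\0\end{bmatrix}$ the first column of $A$ is a basis of $\mathrm{Col}(AB)$ even though $\mathrm{rank}(A)=2>1$; your hypothesis is sufficient, not necessary. Second, and more importantly, your reading does not match the regime in which the paper actually invokes the clause. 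In the proof of Lemma \ref{Lemma:rowspace(E) and span of right singular vectors} it is applied with ``$A$'' equal to the $n\times n$ orthogonal matrix $V$ (so of full rank $n$) and ``$AB$'' equal to $E^TE$ of rank $k$, typically with $k<n$, which is exactly the situation your counterexample rules out in general. What saves that application is not a rank hypothesis but extra structure: $E^TE=V\Sigma^T(U_2-U_1)^T(U_2-U_1)\Sigma V^T$, the middle factor is symmetric and block diagonal conformally with the multiplicities of the singular values (by Proposition \ref{Prop:Grammates:SVD decomp}), and within each block the corresponding columns of $V$ may be re-chosen orthonormally, so $V$ can be taken to diagonalize $E^TE$; the columns of $V$ attached to the nonzero eigenvalues are then a basis of $\mathrm{Col}(E^TE)$. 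So your proof correctly settles the proposition under an added hypothesis, but the paper's later use of the ``furthermore'' clause requires this different (and available) justification rather than the one you supply.
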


We briefly introduce the singular value decomposition (the SVD) \cite{Horn:MatrixAnalysis} which is a factorization of a real (or complex) matrix. We also state a basic observation regarding change of signs of some positive singular values. Our interest lies in zero-one matrices, so we assume our matrices to be real. Given an $m\times n$ matrix $A$, there exist $m\times m$ and $n\times n$ orthogonal matrices $U$ and $V$, respectively, such that $A=U\Sigma V^T$ for some $m\times n$ diagonal matrix $\Sigma$ whose entries on the main diagonal are non-negative in non-increasing order. The diagonal entries of $\Sigma$ are called singular values of $A$. The columns of $U$ and the columns of $V$ are called the left and right singular vectors of $A$, respectively. Note that the number of positive singular values of $A$ equals $\mathrm{rank}(A)$.

Suppose that $B$ is obtained from $A$ by changing signs of some positive singular values. Then,
$B$ can be written as $B=US\Sigma V^T$, where $S$ is a diagonal matrix the main diagonal entries of which consist of $r$ $-1$'s and $m-r$ ones, for some number $r$. Let $\widetilde{U}=US$. Clearly, $\widetilde{U}$ is an orthogonal matrix, so $\widetilde{U}\Sigma V^T$ is a singular value decomposition of $B$. Since $S\Sigma \Sigma^T S^T=\Sigma \Sigma^T$ and $\Sigma^T S^TS \Sigma=\Sigma^T \Sigma$, we have $AA^T=BB^T$ and $A^TA=B^TB$. Furthermore, since $A-B=U(\Sigma-S\Sigma)V^T$ and $\Sigma-S\Sigma\neq 0$, we have $A-B\neq 0$.

\begin{remark}\label{Remark:interp of change of signs}
	Continuing with $A$ and $B=US\Sigma V^T$ above, $S$ gives different interpretations for the relationship between $A$ and $B$. For example, if $S$ is given by $S=\mathrm{diag}(-1,-1,1,\dots,1)$, then we can say that $A$ is obtained from $B$ by changing the signs of the first two singular values; or of the first two either left or right singular vectors; or of the first (resp. second) left and the second (resp. first) right singular vectors. In order to avoid confusion from the choices of singular vectors for sign change, we adopt the interpretation `changing signs of singular values' despite the fact that $A$ and $B$ have the same singular values.
\end{remark}

\begin{lemma}\label{Lemma2:change sign implies AAT=BBT}
	Let $A$ be an $m\times n$ $(0,1)$ matrix. If $B$ is obtained from $A$ by changing signs of some positive singular values of $A$, then $AA^T=BB^T$, $A^TA=B^TB$ and $A\neq B$.
\end{lemma}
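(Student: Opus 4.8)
The plan is to translate the phrase ``changing signs of some positive singular values'' into a single clean matrix identity and then read off the three conclusions by direct computation; indeed this is essentially the calculation recorded in the paragraph preceding Remark~\ref{Remark:interp of change of signs}, so the proof amounts to organizing it carefully. First I would fix a singular value decomposition $A=U\Sigma V^T$, with $U$ an $m\times m$ orthogonal matrix, $V$ an $n\times n$ orthogonal matrix, and $\Sigma$ the $m\times n$ diagonal matrix carrying the singular values on its main diagonal. I would then encode the sign changes by a diagonal matrix $S$ of size $m\times m$ whose diagonal entries are $+1$ except for a $-1$ in each position corresponding to a singular value whose sign is flipped, so that $B=US\Sigma V^T=(US)\Sigma V^T$. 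The hypothesis that at least one positive singular value has its sign changed means precisely that $S\neq I$ and, moreover, that every $-1$ of $S$ sits at a position where the matching diagonal entry of $\Sigma$ is strictly positive.

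Next I would verify the two Gram identities. The structural facts I would use are that $S$ is a symmetric involution, so $S=S^T$ and $S^2=I$, and that both $\Sigma\Sigma^T$ and $\Sigma^T\Sigma$ are diagonal. For the column Gram matrix I would compute
\begin{equation*}
BB^T=US\Sigma V^TV\Sigma^TS^TU^T=U\bigl(S(\Sigma\Sigma^T)S\bigr)U^T,
\end{equation*}
and then use that the diagonal matrices $S$ and $\Sigma\Sigma^T$ commute, together with $S^2=I$, to get $S(\Sigma\Sigma^T)S=\Sigma\Sigma^T$, whence $BB^T=U\Sigma\Sigma^TU^T=AA^T$. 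For the row Gram matrix the computation is even shorter, needing only $U^TU=I$ and $S^TS=I$:
\begin{equation*}
B^TB=V\Sigma^TS^TU^TUS\Sigma V^T=V\Sigma^T(S^TS)\Sigma V^T=V\Sigma^T\Sigma V^T=A^TA.
\end{equation*}

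Finally I would establish $A\neq B$. Writing $A-B=U(I-S)\Sigma V^T$, it suffices to show $(I-S)\Sigma\neq 0$, since $U$ and $V^T$ are invertible and hence a nonzero middle factor forces a nonzero product. At any position where $S$ carries a $-1$, the hypothesis guarantees the corresponding diagonal entry $\sigma_i$ of $\Sigma$ is positive, and the matching diagonal entry of $(I-S)\Sigma$ equals $2\sigma_i>0$; thus $(I-S)\Sigma\neq 0$ and $A\neq B$. There is no genuine obstacle here: the only points requiring care are keeping track of the rectangular shape of $\Sigma$ (so that $S$ is taken $m\times m$ and acts on the left) and invoking the hypothesis ``some positive singular values'' in full strength, namely that an actual sign flip occurs at a nonzero singular value, which is exactly what rules out the degenerate possibility $A=B$.
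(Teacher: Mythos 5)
Your proof is correct and follows essentially the same route as the paper, which establishes this lemma via the computation in the paragraph preceding Remark \ref{Remark:interp of change of signs}: writing $B=US\Sigma V^T$, using $S\Sigma\Sigma^TS^T=\Sigma\Sigma^T$ and $\Sigma^TS^TS\Sigma=\Sigma^T\Sigma$ for the Gram identities, and noting $A-B=U(\Sigma-S\Sigma)V^T$ with $\Sigma-S\Sigma\neq 0$ to conclude $A\neq B$. Your write-up merely fills in the small details (commutation of the diagonal factors and positivity of the flipped singular values) more explicitly.
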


\begin{remark}
	For the result of Lemma \ref{Lemma2:change sign implies AAT=BBT}, if $B$ is not a zero-one matrix, then $A$ and $B$ are not Gram mates.
\end{remark}

\section{Gram mates and the SVD}\label{section:Svd}

\begin{proposition}\label{Prop:Grammates:SVD decomp}
	Let $A$ and $B$ be $m\times n$ real matrices, and let $\sigma_1,\dots,\sigma_\ell$ be distinct singular values (not necessarily in non-increasing order) of $A$ where $\ell\geq 1$. Then, $AA^T=BB^T$ and $A^TA=B^TB$ if and only if there exist orthogonal matrices $U_1$, $U_2$ and some orthogonal $V$ such that $A=U_1\Sigma V^T$, $B=U_2\Sigma V^T$, $U_1=U_2\mathrm{diag}(W_1,\dots,W_\ell)$, where for $i=1,\dots,\ell$, $W_i$ is an $m_i\times m_i$ orthogonal matrix, and $m_i$ is the multiplicity of $\sigma_i$ as a singular value. (Here the multiplicity of $0$ as a singular value coincides with that of $0$ as an eigenvalue of $AA^T$.)
\end{proposition}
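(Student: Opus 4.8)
The plan is to prove the two implications separately, with Proposition \ref{Prop:commute with diagonal matrix} doing the essential work in the forward direction. For the (easier) reverse implication, suppose $A = U_1\Sigma V^T$ and $B = U_2\Sigma V^T$ with $U_1 = U_2\,\mathrm{diag}(W_1,\dots,W_\ell)$ as described. Write $W = \mathrm{diag}(W_1,\dots,W_\ell)$ and $D = \Sigma\Sigma^T = \mathrm{diag}(\sigma_1^2 I_{m_1},\dots,\sigma_\ell^2 I_{m_\ell})$. Since $U_1,U_2,V$ are orthogonal, $A^TA = V\Sigma^T\Sigma V^T = B^TB$ is immediate. For $AA^T = BB^T$ I would observe that $W$ is block diagonal conformally with the block-scalar matrix $D$, so $W_i(\sigma_i^2 I_{m_i})W_i^T = \sigma_i^2 I_{m_i}$ on each block and hence $WDW^T = D$; therefore $AA^T = U_1 D U_1^T = U_2 W D W^T U_2^T = U_2 D U_2^T = BB^T$. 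This direction is purely routine.

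For the forward implication, I would first manufacture a right factor common to both matrices. Choose an orthogonal $V$ diagonalizing the symmetric positive semidefinite matrix $A^TA = B^TB$, grouping equal eigenvalues together, and let $\Sigma$ be the $m\times n$ diagonal matrix of singular values arranged so that $\Sigma^T\Sigma = V^T(A^TA)V$ and $\Sigma\Sigma^T = \mathrm{diag}(\sigma_1^2 I_{m_1},\dots,\sigma_\ell^2 I_{m_\ell})$. A singular value decomposition of $A$ compatible with this choice gives $A = U_1\Sigma V^T$ for some orthogonal $U_1$. To obtain $B = U_2\Sigma V^T$ with the \emph{same} $\Sigma$ and $V$, I set $C = BV$ and compute $C^TC = V^TB^TBV = \Sigma^T\Sigma$; thus the columns of $C$ are mutually orthogonal, the $j$-th having squared norm equal to the $j$-th diagonal entry of $\Sigma^T\Sigma$, so $C$ has a zero column wherever the corresponding singular value vanishes. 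Normalizing the columns of $C$ attached to the nonzero singular values yields an orthonormal family of size $\mathrm{rank}(B)$, which I extend to an orthonormal basis of $\mathbb{R}^m$; collecting these vectors as the columns of an orthogonal matrix $U_2$ gives $C = U_2\Sigma$, that is, $B = U_2\Sigma V^T$. Note that only $A^TA = B^TB$ has been used so far.

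It remains to extract the relation between $U_1$ and $U_2$ from the hypothesis $AA^T = BB^T$. With $D = \Sigma\Sigma^T$, the two decompositions give $U_1 D U_1^T = AA^T = BB^T = U_2 D U_2^T$, so the orthogonal matrix $R = U_2^T U_1$ satisfies $R D R^T = D$, equivalently $RD = DR$. Because $\sigma_1,\dots,\sigma_\ell$ are distinct and non-negative, the scalars $\sigma_1^2,\dots,\sigma_\ell^2$ are distinct, so $D$ meets the hypotheses of Proposition \ref{Prop:commute with diagonal matrix}. That proposition forces $R = \mathrm{diag}(W_1,\dots,W_\ell)$ with each $W_i$ an $m_i\times m_i$ orthogonal matrix, and then $U_1 = U_2 R$ is precisely the required factorization.

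The main obstacle I anticipate is bookkeeping rather than any deep idea: one must make the single choice of $V$ and $\Sigma$ serve both $A$ and $B$, and handle the zero singular values and the rectangular shape (when $m\neq n$) correctly when completing the normalized columns of $BV$ to a genuinely orthogonal $U_2$, keeping the multiplicity of $0$ on the $m\times m$ side consistent with the multiplicity of $0$ as an eigenvalue of $AA^T$. Once the common $\Sigma$ and $V$ are in place, the identity $AA^T = BB^T$ collapses cleanly to the commuting relation $RD = DR$, and Proposition \ref{Prop:commute with diagonal matrix} finishes the argument.
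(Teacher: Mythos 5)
Your proposal is correct and follows essentially the same route as the paper: use $A^TA=B^TB$ to produce a common $V$ and $\Sigma$ for both matrices, then use $AA^T=BB^T$ to show $U_2^TU_1$ commutes with $\Sigma\Sigma^T$ and invoke Proposition \ref{Prop:commute with diagonal matrix}. The only difference is that you spell out the construction of $U_2$ from the columns of $BV$ and the verification of the converse, both of which the paper leaves implicit.
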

\begin{proof}
	By the singular value decomposition, we find from $A^TA=B^TB$ that there exists an $n\times n$ orthogonal matrix $V$ such that $A=U_1\Sigma V^T$ and $B=U_2\Sigma V^T$ for some $m\times n$ rectangular diagonal matrix $\Sigma$, and $m\times m$ orthogonal matrices $U_1$ and $U_2$. Since $AA^T=BB^T$, we have $U_2^TU_1\Sigma\Sigma^T=\Sigma\Sigma^TU_2^TU_1$. By Proposition \ref{Prop:commute with diagonal matrix}, our desired conclusion is obtained.
	
	It is straightforward to prove the converse.
\end{proof}

\begin{remark}\label{Remark:fixing U}
	In the proof of Proposition \ref{Prop:Grammates:SVD decomp}, considering $AA^T=BB^T$ first instead of $A^TA=B^TB$, we can fix the same left singular vectors for $A$ and $B$. So, it can be deduced that there exist orthogonal matrices $V_1$, $V_2$ and some orthogonal $U$ such that $A=U\Sigma V_1^T$, $B=U\Sigma V_2^T$, $V_1=V_2\mathrm{diag}(W_1,\dots,W_\ell)$, where $\ell$ is the number of distinct singular values, $W_i$ is an $m_i\times m_i$ orthogonal matrix, and $m_i$ is the multiplicity of $\sigma_i$ as a singular value for $i=1,\dots,\ell$. (Here the multiplicity of $0$ as a singular value coincides with that of $0$ as an eigenvalue of $A^TA$.)
\end{remark}

\begin{proposition}\label{Prop:rowcolumnspaceofE}
	Let $E$ be a realizable matrix, and $(A,A+E)$ be a pair of Gram mates. For $\bx\in\mathrm{Row}(E)$, we have $A\bx\in \mathrm{Col}(E)$, and for $\by\in\mathrm{Col}(E)$, $A^T\by\in \mathrm{Row}(E)$.
\end{proposition}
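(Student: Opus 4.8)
The plan is to reduce both containments to two identities that simply repackage the Gram mate hypotheses. Writing $B=A+E$, expanding $AA^T=BB^T=(A+E)(A+E)^T$ cancels the $AA^T$ terms and leaves $AE^T+EA^T+EE^T=0$; regrouping the last two summands gives the first identity
\[
AE^T=-E(A^T+E^T)=-EB^T.
\]
Symmetrically, expanding $A^TA=B^TB=(A+E)^T(A+E)$ produces $A^TE+E^TA+E^TE=0$, hence the second identity
\[
A^TE=-E^T(A+E)=-E^TB.
\]
The whole point of this rewriting is that in each identity the matrix $E$ (respectively $E^T$) has been isolated as a \emph{left} factor, which is exactly what a membership in $\mathrm{Col}(E)$ (respectively $\mathrm{Col}(E^T)$) requires.

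With these in hand the two statements follow immediately. For the first, recall $\mathrm{Row}(E)=\mathrm{Col}(E^T)$, so any $\bx\in\mathrm{Row}(E)$ may be written $\bx=E^T\bz$; then $A\bx=AE^T\bz=-EB^T\bz\in\mathrm{Col}(E)$. For the second, any $\by\in\mathrm{Col}(E)$ has the form $\by=E\bv$, and $A^T\by=A^TE\bv=-E^TB\bv\in\mathrm{Col}(E^T)=\mathrm{Row}(E)$. Thus each containment is settled by substituting the corresponding derived identity.

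I expect no genuine obstacle here: the argument is a short, direct computation once the Gram mate conditions are expanded. The only step requiring a moment of care is the bookkeeping that identifies $\mathrm{Row}(E)$ with $\mathrm{Col}(E^T)$ so that a generic row-space vector is presented as $E^T\bz$, after which the derived identities deliver the result with no further work. It is worth flagging that each containment consumes exactly one of the two Gram mate equations, so the full strength of the hypothesis $AA^T=BB^T$ together with $A^TA=B^TB$ is used — one equation per direction.
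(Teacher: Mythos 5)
Your proof is correct and follows essentially the same route as the paper: both derive $AE^T=-E(A+E)^T$ and $A^TE=-E^T(A+E)$ from the two Gram mate equations and then read off the containments. The only cosmetic difference is that you parametrize a general element of $\mathrm{Row}(E)$ as $E^T\bz$, whereas the paper verifies the claim on the individual rows of $E$ and leaves the extension by linearity implicit.
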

\begin{proof}
	Since $AA^T=(A+E)(A+E)^T$, we have $AE^T=-E(A^T+E^T)$. It follows that for any $i^\text{th}$ row vector $\bx_i^T$ of $E$, $A\bx_i\in\mathrm{Col}(E)$. Similarly, $A^TE=-E^T(A+E)$ implies that for any $i^\text{th}$ column vector $\by_i$ of $E$, $A^T\by_i\in\mathrm{Row}(E)$. Therefore, we obtain our desired results.
\end{proof}

\begin{lemma}\label{Lemma:rowspace(E) and span of right singular vectors}
	Let $E$ be a realizable matrix of rank $k$, and $(A,A+E)$ be a pair of Gram mates. Then, there exist $k$ positive singular values of $A$ such that the set of their corresponding right (resp. left) singular vectors is a basis of $\mathrm{Row}(E)$ (resp. $\mathrm{Col}(E)$).
\end{lemma}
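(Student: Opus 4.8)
The plan is to realize $\mathrm{Row}(E)$ as the span of a carefully chosen set of right singular vectors of $A$, exploiting that $A^TA$ is symmetric and that $\mathrm{Row}(E)$ is one of its invariant subspaces. First I would pin down where $\mathrm{Row}(E)$ and $\mathrm{Col}(E)$ sit. Writing $B=A+E$, the Gram mate identity $A^TA=B^TB$ together with Proposition \ref{rowcolumnspace} gives $\mathrm{Row}(A)=\mathrm{Row}(A^TA)=\mathrm{Row}(B^TB)=\mathrm{Row}(B)$, and likewise $AA^T=BB^T$ yields $\mathrm{Col}(A)=\mathrm{Col}(B)$. Since every row of $E=B-A$ is a difference of a row of $B$ and a row of $A$, it follows that $\mathrm{Row}(E)\subseteq\mathrm{Row}(A)+\mathrm{Row}(B)=\mathrm{Row}(A)$, and symmetrically $\mathrm{Col}(E)\subseteq\mathrm{Col}(A)$. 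In particular $\mathrm{Row}(E)$ meets $\ker(A)=\mathrm{Row}(A)^\perp$ only in $0$, which will guarantee that all the singular values I produce are positive.

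The central step is to observe that $\mathrm{Row}(E)$ is invariant under $A^TA$. Indeed, Proposition \ref{Prop:rowcolumnspaceofE} says $A$ carries $\mathrm{Row}(E)$ into $\mathrm{Col}(E)$ and $A^T$ carries $\mathrm{Col}(E)$ into $\mathrm{Row}(E)$, so $A^TA\,\mathrm{Row}(E)\subseteq A^T\mathrm{Col}(E)\subseteq\mathrm{Row}(E)$. Now $A^TA$ is a real symmetric matrix whose eigenvectors are exactly the right singular vectors of $A$ and whose eigenvalues are the squares of the singular values. Because $\mathrm{Row}(E)$ is an invariant subspace, the restriction of $A^TA$ to $\mathrm{Row}(E)$ is again self-adjoint, hence admits an orthonormal eigenbasis $\bv_1,\dots,\bv_k$ lying in $\mathrm{Row}(E)$, where $k=\dim\mathrm{Row}(E)=\mathrm{rank}(E)$; each $\bv_j$ is then an eigenvector of the full matrix $A^TA$. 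Extending $\{\bv_1,\dots,\bv_k\}$ to an orthonormal eigenbasis of $A^TA$ produces an orthogonal $V$, and hence a genuine singular value decomposition of $A$ in which $\bv_1,\dots,\bv_k$ appear as right singular vectors. Since $\mathrm{Row}(E)\cap\ker(A)=\{0\}$, the associated singular values $\sigma_{(1)},\dots,\sigma_{(k)}$ are all positive, and by construction $\bv_1,\dots,\bv_k$ form a basis of $\mathrm{Row}(E)$.

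For the left singular vectors I would use the SVD pairing directly: the left singular vector attached to the positive singular value $\sigma_{(j)}$ is $\bu_j=\sigma_{(j)}^{-1}A\bv_j$. Since $\bv_j\in\mathrm{Row}(E)$, Proposition \ref{Prop:rowcolumnspaceofE} gives $A\bv_j\in\mathrm{Col}(E)$, so each $\bu_j\in\mathrm{Col}(E)$. The $\bu_j$ are orthonormal, hence linearly independent, and $\dim\mathrm{Col}(E)=\mathrm{rank}(E)=k$, so they are a basis of $\mathrm{Col}(E)$, completing the argument.

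The step I expect to be the main obstacle is the second one, and it is really a matter of being careful about the non-uniqueness of the SVD: when $A$ has repeated singular values one cannot simply take ``the'' right singular vectors and hope they line up with $\mathrm{Row}(E)$. The correct viewpoint is that $\mathrm{Row}(E)$ is an $A^TA$-invariant subspace avoiding the kernel, so one is free to choose, within each eigenspace of $A^TA$, an orthonormal basis adapted to its intersection with $\mathrm{Row}(E)$; this is exactly the freedom reflected in the ``there exist $k$ positive singular values'' phrasing of the statement. Everything else reduces to the standard fact that an invariant subspace of a symmetric matrix is spanned by eigenvectors of that matrix.
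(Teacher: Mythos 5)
Your proof is correct, but it follows a genuinely different route from the paper's. The paper starts from Proposition \ref{Prop:Grammates:SVD decomp} to fix a simultaneous SVD $A=U_1\Sigma V^T$, $A+E=U_2\Sigma V^T$, rewrites $E^TE=-A^TE-E^TA$ as $V$ times another matrix, and then invokes the last part of Proposition \ref{rowcolumnspace} (a rank-$k$ product $VM$ has $k$ columns of $V$ spanning its column space) together with $\mathrm{Col}(E^TE)=\mathrm{Row}(E)$ to extract the $k$ right singular vectors; positivity of the corresponding singular values comes from $EV=(U_2-U_1)\Sigma$ having rank $k$. You instead bypass the simultaneous SVD entirely: you use Proposition \ref{Prop:rowcolumnspaceofE} to show $\mathrm{Row}(E)$ is $A^TA$-invariant, apply the spectral theorem to choose an orthonormal eigenbasis of $A^TA$ adapted to that invariant subspace, and get positivity from $\mathrm{Row}(E)\subseteq\mathrm{Row}(A)=\ker(A)^\perp$ (which you derive from $A^TA=B^TB$ via Proposition \ref{rowcolumnspace}). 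Both arguments handle the left singular vectors the same way, mapping the $\bv_i$ forward by $A$ and using Proposition \ref{Prop:rowcolumnspaceofE}. Your version is somewhat more conceptual and makes explicit the freedom in choosing singular vectors within repeated-singular-value eigenspaces, which the paper leaves implicit in its choice of $V$; the paper's version has the side benefit of producing singular vectors that are simultaneously right singular vectors of $A$ and of $A+E$, a fact it reuses in later arguments (e.g., Proposition \ref{Prop:change of sign}).
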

\begin{proof}
	By Proposition \ref{Prop:Grammates:SVD decomp}, there exist orthogonal matrices $U_1$, $U_2$ and $V$ such that $A=U_1\Sigma V^T$ and $A+E=U_2\Sigma V^T$ for some rectangular diagonal matrix $\Sigma$. Then, $EV=(U_2-U_1)\Sigma$. Since $A^TA=(A+E)^T(A+E)$, we have $E^TE=-A^TE-E^TA$. Substituting $V\Sigma^T U_1^T$ and $V\Sigma^T (U_2-U_1)^T$ for $A^T$ and $E^T$, respectively, in $-A^TE-E^TA$, we have 
	$$E^TE=V(-\Sigma^TU_1^TE-\Sigma^T(U_2-U_1)^TA).$$
	By Lemma \ref{rowcolumnspace}, $\mathrm{Col}(E^TE)=\mathrm{Row}(E)$. Since $\mathrm{rank}(E)=k$, $\mathrm{rank}(E^TE)=k$. Again by Lemma \ref{rowcolumnspace}, the column space of $V(-\Sigma^TU_1^TE-\Sigma^T(U_2-U_1)^TA)$ is spanned by $k$ columns $\bv_1,\dots, \bv_k$ of $V$; thus, the vectors $\bv_1,\dots, \bv_k$ comprise a basis of $\mathrm{Row}(E)$. Moreover, for any right singular vector $\bv\notin\mathrm{Row}(E)$, $\bv$ is orthogonal to $\mathrm{Row}(E)$. Thus, $E\bv=0$. The rank of $EV$ is $k$, so $E\bv_i\neq 0$ for $i=1,\dots,k$. Considering $EV=(U_2-U_1)\Sigma$, for $i=1,\dots, k$ the singular value corresponding to $\bv_i$ must be positive.
	
	By Proposition \ref{Prop:rowcolumnspaceofE}, $A\bv_i\in\mathrm{Col}(E)$ for $i=1,\dots,k$. Note that $AV=U_1\Sigma$. Since $A\bv_1,\dots, A\bv_k$ are linearly independent and $\mathrm{rank}(E)=k$, the set of $A\bv_1,\dots,A\bv_k$ is a basis of $\mathrm{Col}(E)$.  Our desired conclusion follows.
\end{proof}

Given a $(0,1)$ matrix $A$, let $B$ be a $(0,1)$ matrix obtained from $A$ by changing signs of some positive singular values. Since $A$ and $B$ are $(0,1)$ matrices, by Lemma \ref{Lemma2:change sign implies AAT=BBT} $A$ and $B$ are Gram mates. Suppose that $\bv$ is a right singular vector corresponding to one of those singular values. Then, $A\bv=-B\bv$ and so, $(A+B)\bv=0$. Considering Lemma \ref{Lemma:rowspace(E) and span of right singular vectors}, either $\bv\in\mathrm{Row}(A-B)$ or $\bv\notin\mathrm{Row}(A-B)$. We shall investigate the relation between $\bv$ and $\mathrm{Row}(A-B)$.

\begin{remark}\label{Remark:meaning of sign changes}
	Suppose that $B$ is obtained from $A$ by changing the signs of positive singular values $\sigma_1,\dots,\sigma_k$ of $A$ for some $k\geq 1$. We can find from Remark \ref{Remark:interp of change of signs} that in the context of obtaining $B$ from $A$, converting the signs of $\sigma_1,\dots,\sigma_k$ is equivalent to changing the sign of one of the left and right singular vectors of $A$ corresponding to $\sigma_i$ for $i=1,\dots,k$.
\end{remark}

\begin{proposition}\label{Prop:change of sign}
	Let $E$ be an $m\times n$ realizable matrix of rank $k$, and let $A$ be a $(0,1)$ matrix such that $A+E$ is a $(0,1)$ matrix. Then, the following are equivalent:
	\begin{enumerate}[label=(\alph*)]
		\item\label{enum:a:change of sign} $(A,A+E)$ is a pair of Gram mates and $(2A+E)E^T=0$,
		\item\label{enum:b:change of sign} $A+E$ is obtained from $A$ by changing the signs of some positive singular values.
	\end{enumerate}
	Furthermore, if one of \ref{enum:a:change of sign} and \ref{enum:b:change of sign} holds, then the following are satisfied: 
	\begin{enumerate}[label=(\roman*)]
		\item the number of positive singular values whose signs are changed is $\mathrm{rank}(E)$, 
		\item\label{cond1:change of sign} the positive singular values of $A$ whose signs are changed are the same as the $k$ positive singular values of $-\frac{1}{2}E$, and
		\item\label{cond2:change of sign} the corresponding left (resp. right) singular vectors of $A$ can be obtained from the corresponding left (resp. right) singular vectors of $-\frac{1}{2}E$. This implies that the corresponding left (resp. right) singular vectors of $A$ comprise a basis of $\mathrm{Col}(E)$ (resp. $\mathrm{Row}(E)$).
	\end{enumerate}
\end{proposition}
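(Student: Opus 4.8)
The plan is to set $B=A+E$ throughout and first rewrite the algebraic condition in \ref{enum:a:change of sign} in a usable symmetric form. Since $2A+E=A+B$ and $E=B-A$, we have $(2A+E)E^T=(A+B)(B-A)^T=AB^T-AA^T+BB^T-BA^T$, which under the Gram mate hypothesis $AA^T=BB^T$ collapses to $(2A+E)E^T=AB^T-BA^T$. Thus \ref{enum:a:change of sign} is equivalent to saying that $A$ and $B$ are Gram mates with $AB^T$ symmetric. For the direction \ref{enum:b:change of sign}$\Rightarrow$\ref{enum:a:change of sign}, I would write $A=U\Sigma V^T$ and $B=US\Sigma V^T$ with $S$ a diagonal sign matrix having at least one $-1$, exactly as in the discussion preceding Lemma \ref{Lemma2:change sign implies AAT=BBT}; that lemma already supplies the Gram mate conclusion. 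Then $2A+E=U(I+S)\Sigma V^T$ and $E=U(S-I)\Sigma V^T$, so $(2A+E)E^T=U(I+S)\Sigma\Sigma^T(S-I)U^T$. As $I+S$, $\Sigma\Sigma^T$ and $S-I$ are all diagonal they commute, and $(I+S)(S-I)=S^2-I=0$ because $S^2=I$; hence $(2A+E)E^T=0$.

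The substance is \ref{enum:a:change of sign}$\Rightarrow$\ref{enum:b:change of sign}. Here I would invoke Proposition \ref{Prop:Grammates:SVD decomp} to obtain orthogonal $U_1,U_2,V$ with $A=U_1\Sigma V^T$, $B=U_2\Sigma V^T$ and $U_1=U_2W$, where $W=\mathrm{diag}(W_1,\dots,W_\ell)$ is block diagonal with each $W_i$ orthogonal and matched to the multiplicity of the distinct singular value $\sigma_i$. Writing $D=\Sigma\Sigma^T$, the symmetry $AB^T=BA^T$ becomes $U_1DU_2^T=U_2DU_1^T$; multiplying on the left by $U_2^T$ and on the right by $U_2$ (and using $U_2^TU_1=W$) reduces this to $WD=DW^T$. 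Block by block this reads $d_iW_i=d_iW_i^T$, so for every block with $d_i=\sigma_i^2>0$ we are forced to have $W_i=W_i^T$; each such $W_i$ is therefore a symmetric orthogonal matrix, hence an involution with $W_i^2=I$. The single block belonging to the zero singular value carries no constraint, but it only ever multiplies the zero part of $\Sigma$ and so is harmless.

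The remaining step converts these involutions into literal sign changes. Each positive-singular-value block admits an orthogonal diagonalization $W_i=Q_i\,\mathrm{diag}(\pm1)\,Q_i^T$; assembling the $Q_i$ into a block-diagonal orthogonal $Q$ that respects the singular value multiplicities, $Q$ commutes with $\Sigma$ on each degenerate subspace, so replacing $U_1\mapsto U_1Q$ and $V\mapsto VQ$ produces another SVD of $A$ in which $W$ is replaced by a genuine sign diagonal $S$. This yields $A=U\Sigma V^T$ and $B=US\Sigma V^T$, which is precisely \ref{enum:b:change of sign}; the hypothesis $A\neq B$ (forced by $E\neq 0$) guarantees $S\neq I$. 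I expect this bookkeeping to be the main obstacle: one must check that the re-chosen singular vectors still constitute a valid SVD, i.e.\ that $Q$ can be taken block diagonal so that it commutes with $\Sigma$ on each eigenspace, while the zero block is absorbed without affecting $A$ or $B$.

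Finally, once $A=U\Sigma V^T$ and $B=US\Sigma V^T$ are in hand, conditions \ref{cond1:change of sign} and \ref{cond2:change of sign} together with the rank statement follow by inspection. We have $E=U(S-I)\Sigma V^T$, whose rank equals the number of $-1$'s in $S$, that is, the number of changed signs; this gives the count $\mathrm{rank}(E)$. Moreover $-\tfrac12 E=U\big(\tfrac{I-S}{2}\big)\Sigma V^T$, and $\tfrac{I-S}{2}$ is a $0/1$ diagonal supported exactly on the changed indices, so restricting to its support exhibits a compact SVD of $-\tfrac12 E$ whose positive singular values are precisely the changed singular values of $A$ and whose singular vectors are the corresponding columns of $U$ and $V$. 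That these columns form bases of $\mathrm{Col}(E)$ and $\mathrm{Row}(E)$ then follows because there are $k=\mathrm{rank}(E)$ of them, consistent with Lemma \ref{Lemma:rowspace(E) and span of right singular vectors}.
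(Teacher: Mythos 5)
Your argument is correct, and for the substantive direction \ref{enum:a:change of sign}$\Rightarrow$\ref{enum:b:change of sign} it takes a genuinely different route from the paper. Both proofs start from Proposition \ref{Prop:Grammates:SVD decomp}, but the paper then invokes Lemma \ref{Lemma:rowspace(E) and span of right singular vectors} to produce $k$ right singular vectors $\bv_1,\dots,\bv_k$ spanning $\mathrm{Row}(E)$, applies $(2A+E)E^T=0$ to each to get $E\bv_i=-2\sigma_i\bu_i$, and reads off $U_2-U_1=-2\begin{bmatrix}\bu_1&\cdots&\bu_k&0\end{bmatrix}$ directly; you instead translate the condition into $WD=DW^T$ for $W=U_2^TU_1$ and $D=\Sigma\Sigma^T$, conclude that each block $W_i$ attached to a positive singular value is a symmetric orthogonal involution, and orthogonally diagonalize it into a sign matrix. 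Your route avoids Lemma \ref{Lemma:rowspace(E) and span of right singular vectors} entirely and handles repeated singular values uniformly via the spectral theorem, at the cost of the basis-change bookkeeping you flag: strictly, the right factor should be an $n\times n$ block-diagonal matrix $\widehat{Q}$ (equal to the $Q_i$ on the positive blocks and the identity elsewhere) with $Q\Sigma=\Sigma\widehat{Q}$, so that $A=(U_1Q)\Sigma(V\widehat{Q})^T$ and $B=(U_1Q)S\Sigma(V\widehat{Q})^T$; that is routine to verify. The paper's more hands-on computation has the advantage of exhibiting the changed singular vectors explicitly as those of $-\tfrac{1}{2}E$ in one pass, which is what conclusions \ref{cond1:change of sign} and \ref{cond2:change of sign} require, though your observation that $-\tfrac{1}{2}E=U\bigl(\tfrac{1}{2}(I-S)\bigr)\Sigma V^T$ is a compact SVD recovers the same information. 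Your converse direction, via $(I+S)(S-I)=0$, is a cleaner one-line replacement for the paper's vector-by-vector check.
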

\begin{proof}
	Suppose that $(A,A+E)$ is a pair of Gram mates and $(2A+E)E^T=0$. By Proposition \ref{Prop:Grammates:SVD decomp}, there exist orthogonal matrices $U_1$, $U_2$ and $V$ such that $A=U_1\Sigma V^T$ and $A+E=U_2\Sigma V^T$ for some rectangular diagonal matrix $\Sigma$. Let $k=\mathrm{rank}(E)$. By Lemma \ref{Lemma:rowspace(E) and span of right singular vectors}, there exist right singular vectors $\bv_1,\dots,\bv_k$ of $A$ corresponding to positive singular values $\sigma_1,\dots,\sigma_k$ that form a basis of $\mathrm{Row}(E)$. Since $(2A+E)E^T=0$, we have $(2A+E)\bv_i=0$ for $i=1,\dots,k$. Then, we have $E\bv_i=-2A\bv_i=-2\sigma_i\bu_i$ where $\bu_i$ is a left singular vector of $A$ corresponding to $\sigma_i$. Furthermore, for any right singular vector $\bv\notin\mathrm{Row}(E)$, $E\bv=0$. Then, without loss of generality, we have
	$$EV=-2\begin{bmatrix}
	\bu_1 & \cdots & \bu_k & 0
	\end{bmatrix}\Sigma.$$
	Since $E=(U_2-U_1)\Sigma V^T$, we have $(U_2-U_1)\Sigma=-2\begin{bmatrix}
	\bu_1 & \cdots & \bu_k & 0
	\end{bmatrix}\Sigma$. If $AA^T$ is singular, then we may choose the same left singular vectors corresponding to the singular value $0$ for $A$ and $A+E$. Hence, $U_2-U_1=-2\begin{bmatrix}
	\bu_1 & \cdots & \bu_k & 0
	\end{bmatrix}$. It follows from Remark \ref{Remark:meaning of sign changes} that $A+E$ is obtained from $A$ by changing the signs of $\sigma_1,\dots,\sigma_k$. Furthermore, applying the Gram–Schmidt process to a basis of the orthogonal complement of $\mathrm{Row}(E)$, we obtain an orthonormal basis, say $\{\tilde{\bu}_{k+1},\dots,\tilde{\bu}_m\}$. Then,
	$$-E=\begin{bmatrix}
	\bu_1 & \cdots & \bu_k & \tilde{\bu}_{k+1} & \cdots & \tilde{\bu}_m
	\end{bmatrix}(2\widetilde{\Sigma})V^T$$
	where $\widetilde{\Sigma}=\mathrm{diag}(\sigma_1,\dots,\sigma_k,0,\dots,0)$. Rearranging the diagonal entries of $\widetilde{\Sigma}$ in non-decreasing order, one can obtain a singular value decomposition of $-E$. Therefore, $2\sigma_1,\dots,2\sigma_k$ are the positive singular values of $-E$.
	
	Suppose that $A+E$ is obtained from $A$ by changing the signs of $\ell$ positive singular values for some $\ell>0$. By Lemma \ref{Lemma2:change sign implies AAT=BBT}, $A$ and $A+E$ are Gram mates. By Remark \ref{Remark:meaning of sign changes}, there exist orthogonal matrices $U$, $\widetilde{U}$ and $V$ such that $A=U\Sigma V^T$ and $A+E=\widetilde{U}\Sigma V^T$ for some rectangular diagonal matrix $\Sigma$, where $\widetilde{U}$ is obtained from $U$ by changing the signs of, without loss of generality, the first $\ell$ columns $\bu_1,\dots,\bu_\ell$ of $U$. Let $\sigma_1,\dots,\sigma_\ell$ be the corresponding positive singular values. Then, $E=(\widetilde{U}-U)\Sigma V^T=-2\begin{bmatrix}
	\bu_1 & \cdots & \bu_\ell & 0
	\end{bmatrix}\Sigma V^T$. So, each row of $E$ is a linear combination of the first $\ell$ rows $\bv_1^T,\dots,\bv_\ell^T$ of $V^T$. Hence, $\mathrm{Row}(E)=\mathrm{span}\{\bv_1\,\dots,\bv_\ell\}$, and this implies $\ell=k=\mathrm{rank}(E)$. Moreover, since $EV=(\widetilde{U}-U)\Sigma$, we have $E\bv_i=-2\sigma_i\bu_i=-2A\bv_i$ for $i=1,\dots, k$. So, $(2A+E)\bv_i=0$ for $i=1,\dots,k$. Therefore, $(2A+E)E^T=0$. Furthermore, applying the same argument above for finding the singular value decomposition of $-E$, we can find that \ref{cond1:change of sign} and \ref{cond2:change of sign} hold.
\end{proof}

\begin{remark}\label{Remark:E^T(2A+E)=0}
	By a similar argument as in the proof of Proposition \ref{Prop:change of sign}, one can establish that $(A,A+E)$ is a pair of Gram mates, $E^T(2A+E)=0$, and $k=\mathrm{rank}(E)$ if and only if for a $(0,1)$ matrix $A$, a $(0,1)$ matrix $A+E$ is obtained from $A$ by changing the signs of $k$ positive singular values. For the proof of the converse, one can begin with $A=U\Sigma V^T$ and $A+E=U\Sigma \widetilde{V}^T$ for some orthogonal matrices $U$, $V$, and $\widetilde{V}$, where $\widetilde{V}$ is obtained from $V$ by changing the signs of $k$ columns of $V$ corresponding to the $k$ positive singular values. 
\end{remark}

\begin{remark}
	Let $E$ be an $m\times n$ realizable matrix of rank $k$, and $(A,A+E)$ be a pair of Gram mates. Suppose that $(2A+E)E^T=0$. Then, $\mathrm{rank}(2A+E)\leq n-k$. There are $k$ right singular vectors of $A$ that correspond to positive singular values and comprise a basis of $\mathrm{Row}(E)$. So, we have $E\bv=0$ for any right singular vector $\bv\notin\mathrm{Row}(E)$. It follows that $E\bx=0$ for $\bx\notin\mathrm{Row}(E)$. Therefore, $A^TA$ is singular if and only if  $\mathrm{rank}(2A+E)<n-k$. Furthermore, $\mathrm{rank}(2A+E)=n-k-l$ where $l$ is the nullity of $A^TA$.
\end{remark}

\begin{theorem}\label{Thm:equivalent conditions}
	Let $E$ be a realizable matrix of rank $k$, and let $A$ be a $(0,1)$ matrix such that $A+E$ is a $(0,1)$ matrix. Then, the following are equivalent:
	\begin{enumerate}[label=(\roman*)]
		\item\label{c0} $(A,A+E)$ is a pair of Gram mates and $(2A+E)E^T=0$.
		\item \label{c1} $(A,A+E)$ is a pair of Gram mates and $E^T(2A+E)=0$.
		\item\label{c2} $A+E$ is obtained from $A$ by changing the signs of $k$ positive singular values of $A$. (Here the $k$ positive singular values are the same as those of $-\frac{1}{2}E$.)
		\item\label{c3} There exist $k$ right singular vectors $\bv_1,\dots,\bv_k$ of $A$ corresponding to positive singular values such that the vectors $\bv_1,\dots,\bv_k$ form a basis of $\mathrm{Row}(E)$ and $\bv_i$ is a null vector of $2A+E$ for $i=1,\dots,k$. (Here $\bv_1,\dots,\bv_k$ can be obtained from right singular vectors corresponding to the positive singular values of $-\frac{1}{2}E$.)
		\item\label{c4} There exist $k$ left singular vectors $\bu_1,\dots,\bu_k$ of $A$ corresponding to positive singular values such that the vectors $\bu_1,\dots,\bu_k$ form a basis of $\mathrm{Col}(E)$ and $\bu_i$ is a null vector of $(2A+E)^T$ for $i=1,\dots,k$. (Here $\bu_1,\dots,\bu_k$ can be obtained from left singular vectors corresponding to the positive singular values of $-\frac{1}{2}E$.)
		\item\label{c5} $(A,A+E)$ is a pair of Gram mates and $AE^T$ is symmetric.
		\item\label{c6} $(A,A+E)$ is a pair of Gram mates and $A^TE$ is symmetric.
	\end{enumerate}
\end{theorem}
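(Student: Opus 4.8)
The plan is to take as given the two-way equivalence \ref{c0} $\Leftrightarrow$ \ref{c2} furnished by Proposition \ref{Prop:change of sign} together with the equivalence \ref{c1} $\Leftrightarrow$ \ref{c2} recorded in Remark \ref{Remark:E^T(2A+E)=0}; these already render \ref{c0}, \ref{c1} and \ref{c2} mutually equivalent. It then remains only to attach the two symmetry conditions \ref{c5}, \ref{c6} and the two singular-vector conditions \ref{c3}, \ref{c4} to this existing class, which I would do by a single short implication in each direction, choosing in each case the representative condition that makes the bookkeeping lightest.

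For the symmetry conditions I would work straight from the Gram mate relations. Expanding $AA^T=(A+E)(A+E)^T$ gives $AE^T+EA^T+EE^T=0$, and expanding $A^TA=(A+E)^T(A+E)$ gives $A^TE+E^TA+E^TE=0$. Using the first identity and the fact that $(2A+E)E^T=2AE^T+EE^T$, the condition $(2A+E)E^T=0$ becomes $EE^T=-2AE^T$; substituting this into $AE^T+EA^T+EE^T=0$ collapses it to $EA^T-AE^T=0$, i.e.\ $AE^T=(AE^T)^T$. Reading the same computation backwards gives the reverse direction, so (under the Gram mate hypothesis shared by both) \ref{c0} $\Leftrightarrow$ \ref{c5}. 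The identical manipulation applied to the second identity, via $E^T(2A+E)=2E^TA+E^TE$, yields \ref{c1} $\Leftrightarrow$ \ref{c6}.

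For the singular-vector conditions I would invoke Proposition \ref{Prop:change of sign} in both directions. Since \ref{c2} is equivalent to both \ref{c0} and \ref{c1}, the identities $(2A+E)E^T=0$ and $E^T(2A+E)=0$ both hold; the ``furthermore'' part of Proposition \ref{Prop:change of sign} supplies $k$ right (resp.\ left) singular vectors corresponding to positive singular values that span $\mathrm{Row}(E)$ (resp.\ $\mathrm{Col}(E)$), and those identities force such vectors to lie in the null space of $2A+E$ (resp.\ of $(2A+E)^T$), giving \ref{c2} $\Rightarrow$ \ref{c3} and \ref{c2} $\Rightarrow$ \ref{c4}. For the converse \ref{c3} $\Rightarrow$ \ref{c2}, write $A\bv_i=\sigma_i\bu_i$ with $\sigma_i>0$; the hypothesis $(2A+E)\bv_i=0$ gives $E\bv_i=-2A\bv_i$, whence $(A+E)\bv_i=-\sigma_i\bu_i$, while for any right singular vector $\bv$ orthogonal to $\mathrm{Row}(E)=\mathrm{span}\{\bv_1,\dots,\bv_k\}$ one has $E\bv=0$ and hence $(A+E)\bv=A\bv$. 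Extending $\bv_1,\dots,\bv_k$ to a full orthonormal system of right singular vectors then exhibits $A+E$ as having the same SVD as $A$ with the left singular vectors $\bu_1,\dots,\bu_k$ negated, which is exactly a change of sign of $\sigma_1,\dots,\sigma_k$. The implication \ref{c4} $\Rightarrow$ \ref{c2} is the transpose of this: from $(2A+E)^T\bu_i=0$ one gets $E^T\bu_i=-2\sigma_i\bv_i$ and $(A+E)^T\bu_i=-\sigma_i\bv_i$, exhibiting the same change of sign by negating the right singular vectors $\bv_1,\dots,\bv_k$ instead.

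The point needing the most care — and the closest thing to an obstacle — is that conditions \ref{c3} and \ref{c4} do not presuppose that $(A,A+E)$ is a pair of Gram mates, whereas \ref{c0}, \ref{c1}, \ref{c5} and \ref{c6} all do. Consequently the converse arguments above may not simply quote the Gram mate relations: they must first produce the explicit change-of-sign description and only then invoke Lemma \ref{Lemma2:change sign implies AAT=BBT} to conclude that $A$ and $A+E$ are Gram mates. Once the change-of-sign structure is established this step is immediate, and the proof closes by assembling the implications into the single equivalence class \ref{c0} $\Leftrightarrow$ \ref{c1} $\Leftrightarrow$ \ref{c2} $\Leftrightarrow$ \ref{c3} $\Leftrightarrow$ \ref{c4} $\Leftrightarrow$ \ref{c5} $\Leftrightarrow$ \ref{c6}.
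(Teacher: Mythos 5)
Your proposal is correct and follows essentially the same route as the paper: conditions (i)--(iii) are identified via Proposition \ref{Prop:change of sign} and Remark \ref{Remark:E^T(2A+E)=0}, the symmetry conditions are attached by expanding $AA^T=(A+E)(A+E)^T$ to $AE^T+EA^T+EE^T=0$ (and its transpose analogue), and the singular-vector conditions follow from Lemma \ref{Lemma:rowspace(E) and span of right singular vectors} together with the change-of-sign description. Your explicit attention to the fact that \ref{c3} and \ref{c4} do not presuppose the Gram mate hypothesis, so that Lemma \ref{Lemma2:change sign implies AAT=BBT} must be invoked after reconstructing the sign-changed SVD, fills in a step the paper leaves as ``straightforward'' but does not alter the approach.
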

\begin{proof}
	\ref{c0} $\Leftrightarrow$ \ref{c1} $\Leftrightarrow$ \ref{c2}: It is clear from Proposition \ref{Prop:change of sign} and Remark \ref{Remark:E^T(2A+E)=0}.
	
	\ref{c0} $\Leftrightarrow$ \ref{c3} and \ref{c1} $\Leftrightarrow$ \ref{c4}: Using Lemma \ref{Lemma:rowspace(E) and span of right singular vectors} and Proposition \ref{Prop:change of sign}, the proof is straightforward.
	
	\ref{c0} $\Leftrightarrow$ \ref{c5} and \ref{c1} $\Leftrightarrow$ \ref{c6}: Since $AA^T=(A+E)(A+E)^T$, we have $AE^T+EA^T+EE^T=0$. Hence, $(2A+E)E^T=0$ implies $EA^T=AE^T$, and vice versa. Similarly, from $A^TA=(A+E)^T(A+E)$, we find \ref{c1} $\Leftrightarrow$ \ref{c6}.
\end{proof}

Theorem \ref{Thm:equivalent conditions} can be recast with respect to Gram mates $A$ and $B$.

\begin{corollary}\label{Cor:equivalent conditions for convertible}
	Let $A$ and $B$ be $(0,1)$ matrices with $A\neq B$, and let $k=\mathrm{rank}(A-B)$. Then, the following are equivalent:
	\begin{enumerate}[label=(\roman*)]
		\item\label{cond:convertible 1} $(A,B)$ is a pair of Gram mates and $(A+B)(A-B)^T=0$.
		\item $(A,B)$ is a pair of Gram mates and $(A-B)^T(A+B)=0$.
		\item\label{d2} $B$ is obtained from $A$ by changing the signs of $k$ positive singular values. (Here the $k$ positive singular values are the same as those of $\frac{1}{2}(A-B)$.)
		\item\label{d3} There exist $k$ right singular vectors $\bv_1,\dots,\bv_k$ of $A$ corresponding to positive singular values such that the vectors $\bv_1,\dots,\bv_k$ form a basis of $\mathrm{Row}(A-B)$ and $\bv_i$ is a null vector of $A+B$ for $i=1,\dots,k$. (Here $\bv_1,\dots,\bv_k$ are obtained from right singular vectors corresponding to the positive singular values of $\frac{1}{2}(A-B)$.)
		\item\label{d4} There exist $k$ left singular vectors $\bu_1,\dots,\bu_k$ of $A$ corresponding to positive singular values such that the vectors $\bu_1,\dots,\bu_k$ form a basis of $\mathrm{Col}(A-B)$ and $\bu_i$ is a null vector of $(A+B)^T$ for $i=1,\dots,k$. (Here $\bu_1,\dots,\bu_k$ are obtained from left singular vectors corresponding to the positive singular values of $\frac{1}{2}(A-B)$.)
		\item\label{cond:convertible 6} $(A,B)$ is a pair of Gram mates and $A(A-B)^T$ is symmetric.
		\item $(A,B)$ is a pair of Gram mates and $A^T(A-B)$ is symmetric.
	\end{enumerate}
\end{corollary}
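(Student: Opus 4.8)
The plan is to obtain the corollary as a direct transcription of Theorem \ref{Thm:equivalent conditions} under the substitution $E = B - A$. With this choice $A + E = B$ is a $(0,1)$ matrix, $E \neq 0$ because $A \neq B$, and $\mathrm{rank}(E) = \mathrm{rank}(A - B) = k$, so the numerical hypotheses match. First I would record the elementary dictionary relating the two lists: $2A + E = A + B$, $(A - B)^T = -E^T$, $\tfrac12(A - B) = -\tfrac12 E$ (an equality of matrices, so they share all singular values and singular vectors), $\mathrm{Row}(A - B) = \mathrm{Row}(E)$, and $\mathrm{Col}(A - B) = \mathrm{Col}(E)$.

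With this dictionary in hand, each condition of the corollary is exactly one of the conditions of the theorem. For instance, $(A + B)(A - B)^T = -(2A + E)E^T$, so \ref{cond:convertible 1} coincides with \ref{c0} and its transpose version with \ref{c1}; since $\tfrac12(A - B) = -\tfrac12 E$, the phrases ``positive singular values of $\tfrac12(A - B)$'' and ``positive singular values of $-\tfrac12 E$'' name the same numbers and the same vectors, which pairs \ref{d2}, \ref{d3}, \ref{d4} with \ref{c2}, \ref{c3}, \ref{c4}; and $A(A - B)^T = -AE^T$, $A^T(A - B) = -A^TE$, so the symmetry clauses match \ref{c5} and \ref{c6}. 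None of these steps needs more than the bookkeeping above.

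The one hypothesis of the theorem that is not automatically present is the realizability of $E$. Realizability of $E = B - A$ is equivalent to $(A, B)$ being a pair of Gram mates (take $(A, A + E)$ as the witnessing pair), so it suffices to verify that each condition already guarantees Gram-mateness. Four of the conditions assert it outright, and \ref{d2} forces it by Lemma \ref{Lemma2:change sign implies AAT=BBT}. For the two singular-vector conditions \ref{d3} and \ref{d4}, which mention no Gram mates, I would give a short direct argument: take an orthonormal basis of right singular vectors of $A$ in which $\bv_1, \dots, \bv_k$ span $\mathrm{Row}(A - B)$ and the remaining ones span its orthogonal complement. The relations $(A + B)\bv_i = 0$ for $i \le k$ and $(A - B)\bv = 0$ for each remaining $\bv$ give $B\bv = \pm A\bv$ on the whole basis, and expanding the dyadic forms of $AA^T$ and $A^TA$ shows the sign flips square away, yielding $AA^T = BB^T$ and $A^TA = B^TB$; equivalently, $B$ is exhibited as obtained from $A$ by sign changes, so \ref{d2} holds and Lemma \ref{Lemma2:change sign implies AAT=BBT} applies once more.

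The main obstacle is precisely this last point: ensuring that conditions \ref{d3} and \ref{d4}, phrased purely in terms of singular vectors of $A$, cannot hold outside the Gram-mate setting, since only there is $E$ realizable and the theorem available. Once realizability is secured from every condition, Theorem \ref{Thm:equivalent conditions} delivers all seven equivalences simultaneously, and the corollary follows.
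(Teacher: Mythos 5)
Your proposal is correct and follows essentially the paper's own route: the paper offers no separate proof of the corollary, merely observing that Theorem \ref{Thm:equivalent conditions} is recast via the substitution $E=B-A$, which is exactly the dictionary you set up. Your additional verification that conditions \ref{d3} and \ref{d4} already force $(A,B)$ to be Gram mates (so that $E$ is realizable and the theorem is applicable) fills in a step the paper leaves implicit, and your argument for it --- sign flips on an orthonormal right singular basis leave $AA^T$ and $A^TA$ unchanged --- is sound.
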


\begin{definition}\label{Definition:convertible}
	Let $A$ be a rectangular $(0,1)$ matrix. The matrix $A$ is said to be \textit{convertible} (to $B$) if there exists a $(0,1)$ matrix $B$ obtained from $A$ by changing the signs of $k$ positive singular values (possibly with repetition) for some $k\geq 1$. Such $A$ and $B$ are called \textit{convertible Gram mates}. Let $\bv_1,\dots,\bv_k$ be right singular vectors of $A$ corresponding to the $k$ singular values whose signs are changed. We say that the $k$ positive singular values are the \textit{Gram singular values} of Gram mates $A$ and $B$ (corresponding to $\bv_1,\dots,\bv_k$). The matrix $A$ is said to \textit{have Gram singular values} if $A$ is convertible. 
\end{definition}

\begin{remark}
	In order to clarify Definition \ref{Definition:convertible}, consider a $(0,1)$ matrix $A$ convertible to $B$. Even though the signs of $k$ positive singular values of $A$ for some $k\geq 1$ are changed for obtaining $B$, by Remark \ref{Remark:meaning of sign changes} $A$ and $B$ have the same singular values. By converting the signs of the $k$ singular values of $B$, we can obtain $A$ from $B$. So, $B$ is convertible to $A$. Hence, we may use the term `convertible Gram mates'.
	
	Let $A=U\Sigma V^T$ where $U$ and $V$ are orthogonal matrices and $\Sigma$ is a rectangular diagonal matrix. Suppose that there exist two ways of obtaining $B$ from $A$ by changing the signs of $k$ positive diagonal entries of $\Sigma$, say $B=U\Sigma_1 V^T$ and $B=U\Sigma_2 V^T$. Clearly, $\Sigma_1=\Sigma_2$. Thus, if $A$ is convertible to $B$, then the $k$ positive singular values of $A$ whose signs are changed and their corresponding singular vectors are uniquely determined. Thus, we may use the term `the Gram singular values of $A$ and $B$'. Furthermore, if there are repeated values among the Gram singular values, then we need to indicate which positions on the main diagonal of $\Sigma$ corresponding to the repeated values are chosen for the sign changes. Therefore, if the Gram singular values are not distinct, then we need to specify corresponding right singular vectors (or left singular vectors).
\end{remark}

\begin{remark}\label{Remark3:convertible}	
	Let $A$ and $B$ be $(0,1)$ matrices with $A\neq B$. If one of the conditions in Corollary \ref{Cor:equivalent conditions for convertible} holds, then $A$ and $B$ are convertible Gram mates; furthermore, all singular vectors of $A$ corresponding to the Gram singular values of $A$ and $B$ can be obtained from those corresponding to all positive singular values of $\frac{1}{2}(A-B)$. One can establish analogous results with respect to Gram mates $A$ and $A+E$ via a realizable matrix $E$ by using Theorem \ref{Thm:equivalent conditions}.
\end{remark}

\begin{example}
	Suppose that a $(0,1)$ matrix $Q$ is a Gram mate to the identity matrix $I$. Clearly, $Q\neq I$ and $Q$ is a permutation matrix. By \ref{cond:convertible 6} of Corollary \ref{Cor:equivalent conditions for convertible}, $Q$ is convertible to $I$ if and only if $I(I-Q)^T$ is symmetric, \textit{i.e.,} $Q$ is symmetric. Therefore, any non-convertible Gram mate to $I$ is a non-symmetric permutation matrix.
\end{example}

\begin{example}\label{Example:Gram mates A1 A2; A2 A1}
	Let $A=\begin{bmatrix}
	A_1 & A_2\\
	A_2 & A_1
	\end{bmatrix}$ be a $(0,1)$ matrix where $A_1$ and $A_2$ have the same size and $A_1\neq A_2$. It can be checked that $B=\begin{bmatrix}
	A_2 & A_1\\
	A_1 & A_2
	\end{bmatrix}$ is a Gram mate to $A$. Furthermore, one can verify that $(A+B)(A-B)^T=0$. Since the condition \ref{cond:convertible 1} of Corollary \ref{Cor:equivalent conditions for convertible} holds, $A$ and $B$ are convertible Gram mates. Furthermore, it follows from the structure of $A-B$ that the Gram singular values of $A$ and $B$ can be obtained from the $k$ positive singular values of $A_1-A_2$ where $k=\mathrm{rank}(A_1-A_2)$.
\end{example}

\section{Gram mates via realizable matrices of rank 1 and 2}\label{section:rank1 and 2}

In this section, we shall completely characterize Gram mates $A$ and $B$ where the rank of $A-B$ is $1$ or $2$. We also investigate convertible Gram mates $A$ and $B$, their Gram singular values, and corresponding singular vectors.

Recall that given a realizable matrix $E$ and a pair of Gram mates $(A,A+E)$, $PAQ$ and $P(A+E)Q$ are Gram mates for any appropriately sized permutation matrices $P$ and $Q$. Hence, we may consider a $(0,1,-1)$ matrix $E=\begin{bmatrix}
\widetilde{E} & 0\\
0 & 0
\end{bmatrix}$ such that $E\mathbf{1}=0$ and $\mathbf{1}^TE=0^T$.

\begin{proposition}[{\cite[Lemma 2.1]{Steve:two-mode}}]\label{Prop:Grammates for E from tilde of E}
	Let $E=\begin{bmatrix}
	\widetilde{E} & 0 \\
	0 & 0
	\end{bmatrix}$ be realizable, and let $A=\begin{bmatrix}
	\widetilde{A} & X_1 \\
	X_2 & X_3
	\end{bmatrix}$ be compatible with the partition of $E$. Then, $A$ and $A+E$ are Gram mates if and only if $(\widetilde{A},\widetilde{A}+\widetilde{E})$ is a pair of Gram mates, $\widetilde{E}X_2^T=0$ and $\widetilde{E}^TX_1=0$.
\end{proposition}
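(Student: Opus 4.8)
The plan is to expand the two defining matrix equalities of Gram mates directly in the given $2\times 2$ block form and to read off the conditions block by block. First I would record the elementary reformulation already used in the proof of Theorem \ref{Thm:equivalent conditions}: for $(0,1)$ matrices, $A$ and $A+E$ satisfy $AA^T=(A+E)(A+E)^T$ if and only if $AE^T+EA^T+EE^T=0$, and they satisfy $A^TA=(A+E)^T(A+E)$ if and only if $A^TE+E^TA+E^TE=0$. Since $E$ is realizable we have $E\neq 0$, hence $\widetilde{E}\neq 0$ and $A\neq A+E$; moreover $E$ vanishes outside its leading block, so $A+E$ is a $(0,1)$ matrix precisely when $\widetilde{A}+\widetilde{E}$ is. Thus the real content of the claim is the equivalence of the two displayed sum-identities with the three stated matrix conditions.

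Next I would carry out the block multiplication for the first identity. With $E=\begin{bmatrix}\widetilde{E}&0\\0&0\end{bmatrix}$ and $A=\begin{bmatrix}\widetilde{A}&X_1\\X_2&X_3\end{bmatrix}$, the matrix $AE^T+EA^T+EE^T$ is a block matrix whose $(2,2)$ block is identically $0$, whose $(1,1)$ block equals $\widetilde{A}\widetilde{E}^T+\widetilde{E}\widetilde{A}^T+\widetilde{E}\widetilde{E}^T$, and whose $(1,2)$ and $(2,1)$ blocks are $\widetilde{E}X_2^T$ and $X_2\widetilde{E}^T$. Hence $AE^T+EA^T+EE^T=0$ holds exactly when $\widetilde{A}\widetilde{E}^T+\widetilde{E}\widetilde{A}^T+\widetilde{E}\widetilde{E}^T=0$ and $\widetilde{E}X_2^T=0$, the $(2,1)$ block being the transpose of the $(1,2)$ block. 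By the reformulation above, the first of these is exactly $\widetilde{A}\widetilde{A}^T=(\widetilde{A}+\widetilde{E})(\widetilde{A}+\widetilde{E})^T$.

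An entirely parallel computation handles the second identity: the off-diagonal blocks of $A^TE+E^TA+E^TE$ are $\widetilde{E}^TX_1$ and its transpose $X_1^T\widetilde{E}$, while the leading block is $\widetilde{A}^T\widetilde{E}+\widetilde{E}^T\widetilde{A}+\widetilde{E}^T\widetilde{E}$, so this identity is equivalent to $\widetilde{E}^TX_1=0$ together with $\widetilde{A}^T\widetilde{A}=(\widetilde{A}+\widetilde{E})^T(\widetilde{A}+\widetilde{E})$. Assembling the two computations, $A$ and $A+E$ are Gram mates if and only if both Gram-matrix equalities hold for $(\widetilde{A},\widetilde{A}+\widetilde{E})$ and $\widetilde{E}X_2^T=0$, $\widetilde{E}^TX_1=0$; combined with $\widetilde{E}\neq 0$, this is precisely the statement that $(\widetilde{A},\widetilde{A}+\widetilde{E})$ is a pair of Gram mates together with the two orthogonality conditions. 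The argument is pure linear algebra once the reformulation is in place, so there is no serious obstacle; the only point demanding care is the bookkeeping that keeps the $(0,1)$-valued and nonzero requirements aligned on the two sides of the equivalence, which is dispatched by the observations in the first paragraph.
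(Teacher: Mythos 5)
Your argument is correct and complete: expanding $AA^T=(A+E)(A+E)^T$ into $AE^T+EA^T+EE^T=0$ and reading off the $(1,1)$, $(1,2)$ and $(2,1)$ blocks (the $(2,2)$ block vanishing identically) gives exactly $\widetilde{A}\widetilde{A}^T=(\widetilde{A}+\widetilde{E})(\widetilde{A}+\widetilde{E})^T$ together with $\widetilde{E}X_2^T=0$, and the transposed computation gives the column-space half; your observations that realizability forces $\widetilde{E}\neq 0$ and that $A+E$ is a $(0,1)$ matrix exactly when $\widetilde{A}+\widetilde{E}$ is take care of the non-equality and $(0,1)$ requirements in the definition of Gram mates. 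Note that the paper itself supplies no proof here --- the statement is imported verbatim as Lemma 2.1 of \cite{Steve:two-mode} --- so there is nothing to compare against; your block computation is the natural argument and would serve as a self-contained proof.
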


\begin{remark}\label{Remark:left right null vectors for E}
	Note that $X_1^T\widetilde{E}=0$ and $\widetilde{E}X_2^T=0$ if and only if columns of $X_1$ are $(0,1)$ left null vectors of $\wtE$, and rows of $X_2$ are $(0,1)$ right null vectors of $\wtE$.
\end{remark}

\begin{proposition}\label{Prop4: E realizable iff wtE is realizable}
	Let $E=\begin{bmatrix}
	\widetilde{E} & 0\\
	0 & 0
	\end{bmatrix}$ be a $(0,1,-1)$ matrix such that $E\mathbf{1}=0$ and $\mathbf{1}^TE=0^T$. Then, $E$ is realizable if and only if $\widetilde{E}$ is realizable.
\end{proposition}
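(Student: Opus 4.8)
The plan is to prove both implications by exploiting the block structure $E=\begin{bmatrix}\widetilde{E} & 0\\0 & 0\end{bmatrix}$ together with Proposition \ref{Prop:Grammates for E from tilde of E}, which ties Gram mates via $E$ to Gram mates via $\widetilde{E}$. The only care needed is that that proposition takes the realizability of $E$ as a standing hypothesis, so it can be invoked directly for one implication but must be replaced by an explicit construction for the other.

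For the forward implication, I would assume $E$ is realizable and fix a witnessing pair of Gram mates $A$ and $A+E$. Partitioning $A=\begin{bmatrix}\widetilde{A} & X_1\\X_2 & X_3\end{bmatrix}$ compatibly with $E$, the block $\widetilde{A}$ is a $(0,1)$ submatrix of $A$ and $\widetilde{A}+\widetilde{E}$ is a $(0,1)$ submatrix of $A+E$. Since $E$ is realizable, Proposition \ref{Prop:Grammates for E from tilde of E} applies, and because $A$ and $A+E$ are in fact Gram mates, the stated equivalence forces $(\widetilde{A},\widetilde{A}+\widetilde{E})$ to be a pair of Gram mates (the auxiliary conditions $\widetilde{E}X_2^T=0$ and $\widetilde{E}^TX_1=0$ come along but are not needed here). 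A realizable matrix is nonzero, so $E\neq 0$, hence $\widetilde{E}\neq 0$ and $\widetilde{A}\neq\widetilde{A}+\widetilde{E}$; therefore $\widetilde{E}$ is realizable.

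For the backward implication, I would avoid Proposition \ref{Prop:Grammates for E from tilde of E}, since its hypothesis is precisely the realizability of $E$ that we are trying to establish, and instead give a direct zero-padding construction. Assume $\widetilde{E}$ is realizable, witnessed by Gram mates $\widetilde{A}$ and $\widetilde{A}+\widetilde{E}$, and set $A=\begin{bmatrix}\widetilde{A} & 0\\0 & 0\end{bmatrix}$ with zero blocks sized so that $A$ has the dimensions of $E$; then $A$ and $A+E=\begin{bmatrix}\widetilde{A}+\widetilde{E} & 0\\0 & 0\end{bmatrix}$ are $(0,1)$ matrices. A routine block multiplication gives $AA^T=\begin{bmatrix}\widetilde{A}\widetilde{A}^T & 0\\0 & 0\end{bmatrix}$ and $(A+E)(A+E)^T=\begin{bmatrix}(\widetilde{A}+\widetilde{E})(\widetilde{A}+\widetilde{E})^T & 0\\0 & 0\end{bmatrix}$, which agree because $\widetilde{A}\widetilde{A}^T=(\widetilde{A}+\widetilde{E})(\widetilde{A}+\widetilde{E})^T$; the identity $A^TA=(A+E)^T(A+E)$ follows in the same way. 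Since $\widetilde{E}\neq 0$ gives $\widetilde{A}\neq\widetilde{A}+\widetilde{E}$, we get $A\neq A+E$, so $A$ and $A+E$ are Gram mates and $E$ is realizable.

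The substantive point to flag is this asymmetry: one direction is an immediate corollary of Proposition \ref{Prop:Grammates for E from tilde of E}, but that proposition cannot be applied in the other direction without circularity, so the construction with zero padding is what carries the argument there. Beyond that, the proof is essentially bookkeeping — confirming that the padded matrices remain $(0,1)$, that the off-diagonal Gram conditions hold trivially, and that nonzeroness of $\widetilde{E}$ guarantees genuine distinctness so both pairs qualify as Gram mates rather than coinciding.
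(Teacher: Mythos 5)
Your proof is correct and follows essentially the same route as the paper: the forward direction is read off from Proposition \ref{Prop:Grammates for E from tilde of E}, and the converse is handled by the same zero-padding construction $A=\begin{bmatrix}\widetilde{A} & 0\\ 0 & 0\end{bmatrix}$. Your added remarks on avoiding circularity and on verifying $A\neq A+E$ are sound but do not change the argument.
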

\begin{proof}
	It follows from Proposition \ref{Prop:Grammates for E from tilde of E} that if $E$ is realizable, so is $\widetilde{E}$. Conversely, suppose that $\widetilde{E}$ is realizable. Let $(\widetilde{A},\widetilde{A}+\widetilde{E})$ be a pair of Gram mates. Then, $\begin{bmatrix}
	\widetilde{A} & 0 \\
	0 & 0
	\end{bmatrix}$ and $\begin{bmatrix}
	\widetilde{A}+\widetilde{E} & 0 \\
	0 & 0
	\end{bmatrix}$ that are compatible with the partition of $E$ are Gram mates. Hence, $E$ is realizable.
\end{proof}

\begin{proposition}\label{Prop: Gramsingular A and tilde A}
	Let $E=\begin{bmatrix}
	\widetilde{E} & 0 \\
	0 & 0
	\end{bmatrix}$ be realizable, and let $A=\begin{bmatrix}
	\widetilde{A} & X_1 \\
	X_2 & X_3
	\end{bmatrix}$ be compatible with the partition of $E$. Suppose that $A$ and $A+E$ are Gram mates. Then, $AE^T=EA^T$ if and only if $\widetilde{A}\widetilde{E}^T=\widetilde{E}\widetilde{A}^T$. This implies that if $A$ is convertible to $A+E$, so is $\widetilde{A}$ to $\widetilde{A}+\widetilde{E}$, and vice versa.
\end{proposition}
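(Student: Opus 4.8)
The plan is to reduce the matrix identity $AE^T=EA^T$ to a block-by-block comparison, and then to observe that the Gram-mate hypothesis makes all but the leading block vanish automatically, so that the full identity collapses to the single condition on $\widetilde A$ and $\widetilde E$.

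First I would write out the two products using the given partitions. With
$$E^T=\begin{bmatrix}\widetilde{E}^T & 0\\ 0 & 0\end{bmatrix}\quad\text{and}\quad A^T=\begin{bmatrix}\widetilde{A}^T & X_2^T\\ X_1^T & X_3^T\end{bmatrix},$$
a direct block multiplication gives
$$AE^T=\begin{bmatrix}\widetilde{A}\widetilde{E}^T & 0\\ X_2\widetilde{E}^T & 0\end{bmatrix},\qquad EA^T=\begin{bmatrix}\widetilde{E}\widetilde{A}^T & \widetilde{E}X_2^T\\ 0 & 0\end{bmatrix}.$$
Thus $AE^T=EA^T$ amounts to the three conditions $\widetilde{A}\widetilde{E}^T=\widetilde{E}\widetilde{A}^T$, $\widetilde{E}X_2^T=0$, and $X_2\widetilde{E}^T=0$, the last being merely the transpose of the second.

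The key point is that the two off-diagonal conditions hold for free. Since $A$ and $A+E$ are Gram mates, Proposition \ref{Prop:Grammates for E from tilde of E} yields $\widetilde{E}X_2^T=0$, and transposing gives $X_2\widetilde{E}^T=0$. Hence the only surviving requirement is equality of the leading blocks, which establishes that $AE^T=EA^T$ if and only if $\widetilde{A}\widetilde{E}^T=\widetilde{E}\widetilde{A}^T$.

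For the convertibility claim I would combine this equivalence with Theorem \ref{Thm:equivalent conditions}. By condition \ref{c5} of that theorem, for the Gram mates $A$ and $A+E$ convertibility is equivalent to $AE^T$ being symmetric, that is, to $AE^T=EA^T$; likewise, since Proposition \ref{Prop:Grammates for E from tilde of E} guarantees that $(\widetilde{A},\widetilde{A}+\widetilde{E})$ is itself a pair of Gram mates, convertibility of $\widetilde{A}$ to $\widetilde{A}+\widetilde{E}$ is equivalent to $\widetilde{A}\widetilde{E}^T=\widetilde{E}\widetilde{A}^T$. The equivalence of these two symmetry conditions, just established, then delivers the stated implication in both directions. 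The argument is essentially bookkeeping; the one point requiring care is to invoke the Gram-mate hypothesis via Proposition \ref{Prop:Grammates for E from tilde of E} to discard the off-diagonal blocks, rather than treating them as additional constraints. I do not anticipate a genuine obstacle.
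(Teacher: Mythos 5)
Your proposal is correct and follows essentially the same route as the paper: the block computation showing that the off-diagonal blocks of $AE^T$ and $EA^T$ are $X_2\widetilde{E}^T$ and $\widetilde{E}X_2^T$, which vanish by Proposition \ref{Prop:Grammates for E from tilde of E}, is exactly the "readily checked computation" the paper alludes to, and the convertibility claim is handled by the same symmetry characterization (condition \ref{c5} of Theorem \ref{Thm:equivalent conditions}, which the paper invokes via Remark \ref{Remark3:convertible}). No gaps.
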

\begin{proof}
	From Proposition \ref{Prop:Grammates for E from tilde of E}, we have $\widetilde{E}X_2^T=0$. It can be readily checked from computation that $AE^T=EA^T$ if and only if $\widetilde{A}\widetilde{E}^T=\widetilde{E}\widetilde{A}^T$. By Remark \ref{Remark3:convertible}, the desired conclusion follows.
\end{proof}

\begin{proposition}\label{Prop:singular vecotrs of E and tilde of E}
	Let $E=\begin{bmatrix}
	\widetilde{E} & 0 \\
	0 & 0
	\end{bmatrix}$ be realizable, and let $A=\begin{bmatrix}
	\widetilde{A} & X_1 \\
	X_2 & X_3
	\end{bmatrix}$ be compatible with the partition of $E$. Suppose that $A$ and $A+E$ are Gram mates. We may assume (by Lemma \ref{Lemma:rowspace(E) and span of right singular vectors}) that vectors $\tilde{\bv}_1,\dots,\tilde{\bv}_k$ form a basis of $\mathrm{Row}(\widetilde{E})$, where $\tilde{\bv}_i$ is a right singular vector associated to a positive singular value $\sigma_i$ of $\widetilde{A}$ for $i=1,\dots,k$. Furthermore, suppose that for $i=1,\dots,k$, $\tilde{\bu}_i$ is the corresponding left singular vector. Then, $\begin{bmatrix} \tilde{\bv}_i \\ 0\end{bmatrix}$ $\left(\text{resp.} \begin{bmatrix} \tilde{\bu}_i \\ 0\end{bmatrix}\right)$ is a right (resp. left) singular vector corresponding to $\sigma_i$ of $A$ for $i=1,\dots,k$.
\end{proposition}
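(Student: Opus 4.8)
The plan is to verify directly that the appended vectors satisfy the two defining relations of a singular-vector pair for $A$. Recall that for a matrix $M$ with singular value $\sigma$, a unit right singular vector $\bv$ and its paired unit left singular vector $\bu$ satisfy $M\bv=\sigma\bu$ and $M^T\bu=\sigma\bv$. Since $\tilde{\bv}_i$ and $\tilde{\bu}_i$ are already a singular-vector pair of $\widetilde{A}$ for $\sigma_i$ (so $\widetilde{A}\tilde{\bv}_i=\sigma_i\tilde{\bu}_i$ and $\widetilde{A}^T\tilde{\bu}_i=\sigma_i\tilde{\bv}_i$), and appending zeros preserves Euclidean norm, it suffices to show that $A\begin{bmatrix}\tilde{\bv}_i\\0\end{bmatrix}=\sigma_i\begin{bmatrix}\tilde{\bu}_i\\0\end{bmatrix}$ and $A^T\begin{bmatrix}\tilde{\bu}_i\\0\end{bmatrix}=\sigma_i\begin{bmatrix}\tilde{\bv}_i\\0\end{bmatrix}$ for each $i=1,\dots,k$.

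Carrying out the block multiplication with $A=\begin{bmatrix}\widetilde{A}&X_1\\X_2&X_3\end{bmatrix}$ gives $A\begin{bmatrix}\tilde{\bv}_i\\0\end{bmatrix}=\begin{bmatrix}\widetilde{A}\tilde{\bv}_i\\X_2\tilde{\bv}_i\end{bmatrix}=\begin{bmatrix}\sigma_i\tilde{\bu}_i\\X_2\tilde{\bv}_i\end{bmatrix}$ and, using $A^T=\begin{bmatrix}\widetilde{A}^T&X_2^T\\X_1^T&X_3^T\end{bmatrix}$, $A^T\begin{bmatrix}\tilde{\bu}_i\\0\end{bmatrix}=\begin{bmatrix}\widetilde{A}^T\tilde{\bu}_i\\X_1^T\tilde{\bu}_i\end{bmatrix}=\begin{bmatrix}\sigma_i\tilde{\bv}_i\\X_1^T\tilde{\bu}_i\end{bmatrix}$. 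Thus the whole claim reduces to the two vanishing identities $X_2\tilde{\bv}_i=0$ and $X_1^T\tilde{\bu}_i=0$, and the heart of the argument is to establish these.

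To obtain them I would invoke Proposition \ref{Prop:Grammates for E from tilde of E}, which gives $\widetilde{E}X_2^T=0$ and $\widetilde{E}^TX_1=0$. Transposing the first yields $X_2\widetilde{E}^T=0$, i.e.\ every row of $X_2$ is orthogonal to every row of $\widetilde{E}$, so each row of $X_2$ is orthogonal to $\mathrm{Row}(\widetilde{E})$; likewise $\widetilde{E}^TX_1=0$ means each column of $X_1$ is orthogonal to $\mathrm{Col}(\widetilde{E})$. Now, by the hypothesis together with Lemma \ref{Lemma:rowspace(E) and span of right singular vectors}, the $\tilde{\bv}_i$ form a basis of $\mathrm{Row}(\widetilde{E})$ and their paired left singular vectors $\tilde{\bu}_i$ form a basis of $\mathrm{Col}(\widetilde{E})$; in particular $\tilde{\bv}_i\in\mathrm{Row}(\widetilde{E})$ and $\tilde{\bu}_i\in\mathrm{Col}(\widetilde{E})$. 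Combining these membership facts with the orthogonality just derived gives $X_2\tilde{\bv}_i=0$ and $X_1^T\tilde{\bu}_i=0$, which is exactly what the block computation required.

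I do not expect a genuine obstacle here: the only real content is correctly translating the matrix identities $\widetilde{E}X_2^T=0$ and $\widetilde{E}^TX_1=0$ into the orthogonality statements ``rows of $X_2\perp\mathrm{Row}(\widetilde{E})$'' and ``columns of $X_1\perp\mathrm{Col}(\widetilde{E})$'' and then pairing them against the subspace memberships supplied by Lemma \ref{Lemma:rowspace(E) and span of right singular vectors}. The remaining bookkeeping---checking that the appended zero blocks have the sizes dictated by the partition of $A$, and that passing from $\tilde{\bv}_i,\tilde{\bu}_i$ to $\begin{bmatrix}\tilde{\bv}_i\\0\end{bmatrix},\begin{bmatrix}\tilde{\bu}_i\\0\end{bmatrix}$ preserves unit length---is routine, so the singular-vector pair relations for $A$ follow immediately.
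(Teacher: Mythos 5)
Your proposal is correct and follows essentially the same route as the paper's proof: block-multiply, reduce to the identities $X_2\tilde{\bv}_i=0$ and $X_1^T\tilde{\bu}_i=0$, and obtain these from $\widetilde{E}X_2^T=0$, $\widetilde{E}^TX_1=0$ together with $\tilde{\bv}_i\in\mathrm{Row}(\widetilde{E})$ and $\tilde{\bu}_i\in\mathrm{Col}(\widetilde{E})$. The only cosmetic difference is that the paper justifies $\tilde{\bu}_i\in\mathrm{Col}(\widetilde{E})$ via Proposition \ref{Prop:rowcolumnspaceofE} (since $\sigma_i\tilde{\bu}_i=\widetilde{A}\tilde{\bv}_i$), whereas you cite the parallel clause of Lemma \ref{Lemma:rowspace(E) and span of right singular vectors}; both are valid.
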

\begin{proof}
	Using Proposition \ref{Prop:Grammates for E from tilde of E}, we have $\widetilde{E}X_2^T=0$ and $\widetilde{E}^TX_1=0$, \textit{i.e.}, $\mathrm{Row}(\widetilde{E})$ and $\mathrm{Col}(\widetilde{E})$ are orthogonal to $\mathrm{Row}(X_2)$ and $\mathrm{Col}(X_1)$, respectively. Let $\tilde{\bv}$ be a right singular vector  corresponding to a positive singular value $\sigma$ of $\widetilde{A}$ such that $\tilde{\bv}\in\mathrm{Row}(\widetilde{E})$. Since $\widetilde{E}X_2^T=0$, we have $X_2\tilde{\bv}=0$. For $\sigma\tilde{\bu}=\widetilde{A}\tilde{\bv}$ where $\tilde{\bu}$ is the corresponding left singular vector of $\widetilde{A}$, we have $A\begin{bmatrix} \tilde{\bv} \\ 0\end{bmatrix}=\begin{bmatrix} \widetilde{A}\tilde{\bv} \\ X_2\tilde{\bv}\end{bmatrix}=\sigma\begin{bmatrix} \tilde{\bu} \\ 0\end{bmatrix}$. Since $\tilde{\bv}\in\mathrm{Row}(\widetilde{E})$, by Proposition \ref{Prop:rowcolumnspaceofE} we obtain $\sigma\tilde{\bu}=\widetilde{A}\tilde{\bv}\in\mathrm{Col}(\widetilde{E})$. We find from $\widetilde{E}^TX_1=0$ that $X_1^T(\sigma\tilde{\bu})=X_1^T\widetilde{A}\tilde{\bv}=0$. Note that $\widetilde{A}^T\tilde{\bu}=\sigma\tilde{\bv}$. Hence, $A^T\begin{bmatrix}\tilde{\bu} \\ 0\end{bmatrix}=\begin{bmatrix} \widetilde{A}^T\tilde{\bu} \\ X_1^T\tilde{\bu}\end{bmatrix}=\sigma\begin{bmatrix} \tilde{\bv} \\ 0\end{bmatrix}$. Therefore, our desired result is obtained.
\end{proof}

\begin{corollary}\label{Cor:A and tilde of A has Gram singular values}
	Let $E=\begin{bmatrix}
	\widetilde{E} & 0 \\
	0 & 0
	\end{bmatrix}$ be realizable, and let $A=\begin{bmatrix}
	\widetilde{A} & X_1 \\
	X_2 & X_3
	\end{bmatrix}$ be compatible with the partition of $E$. Suppose that $A$ and $A+E$ are Gram mates. If $\widetilde{A}$ is convertible to $\widetilde{A}+\widetilde{E}$, then $A$ is convertible to $A+E$; the Gram singular values of $A$ and $A+E$ are the same as those of $\widetilde{A}$ and $\widetilde{A}+\widetilde{E}$; and the corresponding singular vectors of $A$ are obtained from those of $\widetilde{A}$ by adjoining a column of zeros.
\end{corollary}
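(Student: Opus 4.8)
The plan is to assemble the corollary from the three preceding results, since most of the work is already in place. For the convertibility claim, I would simply invoke the ``vice versa'' direction of Proposition \ref{Prop: Gramsingular A and tilde A}: because $A$ and $A+E$ are assumed to be Gram mates, that proposition gives $AE^T = EA^T$ if and only if $\widetilde A \widetilde E^T = \widetilde E \widetilde A^T$, and hence $\widetilde A$ is convertible to $\widetilde A + \widetilde E$ exactly when $A$ is convertible to $A+E$. Unwinding the mechanism, convertibility of $\widetilde A$ is equivalent (by condition \ref{cond:convertible 6} of Corollary \ref{Cor:equivalent conditions for convertible}, applied to $\widetilde A$) to the symmetry of $\widetilde A \widetilde E^T$; Proposition \ref{Prop: Gramsingular A and tilde A} transfers this to the symmetry of $A E^T$; and condition \ref{c5} of Theorem \ref{Thm:equivalent conditions} then yields that $A+E$ is obtained from $A$ by changing the signs of $\mathrm{rank}(E)$ positive singular values.

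For the equality of Gram singular values, I would use condition \ref{c2} of Theorem \ref{Thm:equivalent conditions}, which identifies the Gram singular values of $A$ and $A+E$ with the positive singular values of $-\frac{1}{2}E$, and likewise those of $\widetilde A$ and $\widetilde A + \widetilde E$ with the positive singular values of $-\frac{1}{2}\widetilde E$. Since $E = \begin{bmatrix} \widetilde E & 0 \\ 0 & 0 \end{bmatrix}$ is obtained from $\widetilde E$ by bordering with zero blocks, the two matrices have the same nonzero singular values; hence the Gram singular values coincide, and their common number equals $\mathrm{rank}(\widetilde E) = \mathrm{rank}(E)$.

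For the singular vectors, I would apply Proposition \ref{Prop:singular vecotrs of E and tilde of E}, which already shows that appending a block of zeros to a Gram singular vector $\widetilde{\bv}_i$ (resp. $\widetilde{\bu}_i$) of $\widetilde A$ yields a right (resp. left) singular vector of $A$ associated with the same singular value $\sigma_i$. What remains is to certify that these bordered vectors are genuinely the Gram singular vectors of $A$, rather than merely some singular vectors attached to $\sigma_i$. By conditions \ref{c3} and \ref{c4} of Theorem \ref{Thm:equivalent conditions} this reduces to two checks: first, that the vectors $\begin{bmatrix} \widetilde{\bv}_i \\ 0 \end{bmatrix}$ form a basis of $\mathrm{Row}(E)$ and the $\begin{bmatrix} \widetilde{\bu}_i \\ 0 \end{bmatrix}$ a basis of $\mathrm{Col}(E)$, which is immediate from the block form of $E$ once $\{\widetilde{\bv}_i\}$ and $\{\widetilde{\bu}_i\}$ are bases of $\mathrm{Row}(\widetilde E)$ and $\mathrm{Col}(\widetilde E)$; and second, that each bordered vector lies in the null space of $2A+E$ (resp. $(2A+E)^T$). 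The latter is the only real computation: writing $2A+E$ in block form and applying it to $\begin{bmatrix} \widetilde{\bv}_i \\ 0 \end{bmatrix}$ gives the block vector with components $(2\widetilde A + \widetilde E)\widetilde{\bv}_i$ and $2X_2\widetilde{\bv}_i$, both of which vanish, the first because $\widetilde{\bv}_i$ is a Gram singular vector of $\widetilde A$, and the second because $\widetilde E X_2^T = 0$ (Proposition \ref{Prop:Grammates for E from tilde of E}) forces $X_2 \widetilde{\bv}_i = 0$ for $\widetilde{\bv}_i \in \mathrm{Row}(\widetilde E)$.

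I expect no serious obstacle, as the corollary is largely bookkeeping built on Propositions \ref{Prop: Gramsingular A and tilde A} and \ref{Prop:singular vecotrs of E and tilde of E}. The one point demanding care is the identification step just described: verifying that the bordered singular vectors satisfy the null-vector condition of Theorem \ref{Thm:equivalent conditions}, so that they are recognized as the Gram singular vectors and not arbitrary singular vectors for a possibly repeated singular value. The orthogonality relations $\widetilde E X_2^T = 0$ and $\widetilde E^T X_1 = 0$ furnished by the Gram-mate hypothesis are precisely what make this go through.
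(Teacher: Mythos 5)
Your proposal is correct and follows essentially the same route as the paper, whose proof is the one-line observation that the corollary follows by combining Propositions \ref{Prop: Gramsingular A and tilde A} and \ref{Prop:singular vecotrs of E and tilde of E}. Your additional verification that the bordered vectors satisfy the null-vector conditions of Theorem \ref{Thm:equivalent conditions} (using $\widetilde{E}X_2^T=0$) is a worthwhile filling-in of a detail the paper leaves implicit, but it is not a different argument.
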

\begin{proof}
	Combining Propositions \ref{Prop: Gramsingular A and tilde A} and \ref{Prop:singular vecotrs of E and tilde of E}, the conclusion is straightforward.
\end{proof}

Summarizing Propositions \ref{Prop:Grammates for E from tilde of E}--\ref{Prop:singular vecotrs of E and tilde of E} and Corollary \ref{Cor:A and tilde of A has Gram singular values}, given a realizable matrix $E=\begin{bmatrix}
\widetilde{E} & 0 \\
0 & 0
\end{bmatrix}$, characterizing realizability of $\widetilde{E}$, Gram mates via $\widetilde{E}$, convertible Gram mates via $\widetilde{E}$, and their Gram singular values and corresponding singular vectors endows $E$ with the same properties except that we need the extra conditions in Proposition \ref{Prop:Grammates for E from tilde of E} to find Gram mates via $E$.

\subsection{Gram mates via matrices of rank $1$}\label{Subsec:realizable rank 1}
Suppose that a realizable matrix $E$ is of rank $1$. Without loss of generality,
$$
E=\begin{bmatrix}
J_{k_1,k_2} & -J_{k_1,k_2} & 0\\
-J_{k_1,k_2} & J_{k_1,k_2} & 0\\
0 & 0 & 0
\end{bmatrix}
$$
for some $k_1, k_2>0$. Let $\wtE=\begin{bmatrix}
J_{k_1,k_2} & -J_{k_1,k_2}\\
-J_{k_1,k_2} & J_{k_1,k_2}
\end{bmatrix}$. It is straightforward that $\wtA=\begin{bmatrix}
0 & J_{k_1,k_2}\\
J_{k_1,k_2} & 0
\end{bmatrix}$ and $\wtA+\wtE$ are the only pair of Gram mates via $E$. Thus, $\wtE$ is realizable. Furthermore, $\wtA\wtE^T=\wtE\wtA^T$. This implies that $\wtA$ is convertible to $\wtA+\wtE$. Since $\mathrm{rank}(\wtE)=1$, there is only one positive singular value of $-\frac{1}{2}\wtE$, which is the Gram singular value of $\wtA$ and $\wtA+\wtE$. One can find that the positive singular value of $-\frac{1}{2}\wtE$ is $\sqrt{k_1k_2}$ and the corresponding left and the corresponding right singular vector are $\frac{1}{\sqrt{2k_1}}\begin{bmatrix} -\mathbf{1}_{k_1}\\\mathbf{1}_{k_1}\end{bmatrix}$ and $\frac{1}{\sqrt{2k_2}}\begin{bmatrix} \mathbf{1}_{k_2}\\-\mathbf{1}_{k_2}\end{bmatrix}$ up to sign, respectively.

\begin{theorem}\label{Thm:Gram mates of rank1}
	Suppose that $E$ is a realizable matrix of rank $1$: 
	$$
	E=\begin{bmatrix}
	J_{k_1,k_2} & -J_{k_1,k_2} & 0\\
	-J_{k_1,k_2} & J_{k_1,k_2} & 0\\
	0 & 0 & 0
	\end{bmatrix}
	$$
	for some $k_1, k_2>0$. Let $A=\begin{bmatrix}
	0 & J_{k_1,k_2} & X_1\\
	J_{k_1,k_2} & 0 & X_2\\
	X_3 & X_4 & Y
	\end{bmatrix}$ be a $(0,1)$ matrix compatible with the partition of $E$. Then, we have the following:
	\begin{enumerate}[label=(\roman*)]
		\item $A$ and $A+E$ are Gram mates if and only if $\mathbf{1}^TX_1=\mathbf{1}^TX_2$ and $X_3\mathbf{1}=X_4\mathbf{1}$.
		\item $E$ is realizable.
		\item For any Gram mates via $E$, they are convertible each other.
		\item For any Gram mates via $E$, their Gram singular value is $\sqrt{k_1k_2}$, and the corresponding left and right singular vectors are (up to sign) $\frac{1}{\sqrt{2k_1}}\begin{bmatrix} -\mathbf{1}_{k_1}\\\mathbf{1}_{k_1}\\0\end{bmatrix}$ and $\frac{1}{\sqrt{2k_2}}\begin{bmatrix} \mathbf{1}_{k_2}\\-\mathbf{1}_{k_2}\\0\end{bmatrix}$, respectively.
	\end{enumerate}
\end{theorem}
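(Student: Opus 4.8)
The plan is to read every part off the block-reduction results developed just before the statement, applied to the specific data $\widetilde E = \begin{bmatrix} J_{k_1,k_2} & -J_{k_1,k_2} \\ -J_{k_1,k_2} & J_{k_1,k_2} \end{bmatrix}$ and $\widetilde A = \begin{bmatrix} 0 & J_{k_1,k_2} \\ J_{k_1,k_2} & 0 \end{bmatrix}$, whose relevant features (realizability of $\widetilde E$, uniqueness of the mate $\widetilde A$, convertibility of $\widetilde A$ to $\widetilde A + \widetilde E$, and the single Gram singular value $\sqrt{k_1 k_2}$ with its two listed singular vectors) were already computed in the paragraph preceding the theorem. The first move is to match the given $A$ to the shape required by Proposition \ref{Prop:Grammates for E from tilde of E} by setting $X_1' = \begin{bmatrix} X_1 \\ X_2 \end{bmatrix}$, $X_2' = \begin{bmatrix} X_3 & X_4 \end{bmatrix}$, and $X_3' = Y$, so that the top-left block of $A$ is exactly $\widetilde A$.

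For part (i) I would invoke Proposition \ref{Prop:Grammates for E from tilde of E}: since $(\widetilde A, \widetilde A + \widetilde E)$ is already known to be a pair of Gram mates, $(A, A+E)$ is a pair of Gram mates if and only if $\widetilde E (X_2')^T = 0$ and $\widetilde E^T X_1' = 0$. The only genuine computation is to unwind these two equations, and this is where I expect the bookkeeping to live. Because every block of $\widetilde E$ is $\pm J_{k_1,k_2} = \pm \mathbf{1}\mathbf{1}^T$, the product $\widetilde E^T \begin{bmatrix} X_1 \\ X_2 \end{bmatrix}$ collapses to a multiple of $\mathbf{1}_{k_2}(\mathbf{1}^T X_1 - \mathbf{1}^T X_2)$, which vanishes precisely when $\mathbf{1}^T X_1 = \mathbf{1}^T X_2$; symmetrically, $\widetilde E \begin{bmatrix} X_3^T \\ X_4^T \end{bmatrix}$ collapses to a multiple of $\mathbf{1}_{k_1}(X_3 \mathbf{1} - X_4 \mathbf{1})^T$, giving the condition $X_3 \mathbf{1} = X_4 \mathbf{1}$. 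This rank-one collapse is the only real content of the theorem; the rest is assembly.

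Parts (ii)--(iv) then follow as corollaries. Realizability of $E$ comes either from Proposition \ref{Prop4: E realizable iff wtE is realizable} together with the already-established realizability of $\widetilde E$, or directly by taking the Gram pair from part (i) with the free blocks set to zero. For (iii) and (iv), I would first note that every $A$ forming a Gram pair via $E$ necessarily has top-left block $\widetilde A$, since Proposition \ref{Prop:Grammates for E from tilde of E} forces that block to be a Gram mate of $\widetilde A + \widetilde E$ and such a mate is unique (as recorded before the statement); hence the displayed form is general. The convertibility of $\widetilde A$ to $\widetilde A + \widetilde E$, established in the preamble via $\widetilde A \widetilde E^T = \widetilde E \widetilde A^T$, then upgrades to convertibility of $A$ to $A+E$ by Corollary \ref{Cor:A and tilde of A has Gram singular values}, which simultaneously transports the Gram singular value and its singular vectors. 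Finally, since the lone positive singular value of $-\tfrac{1}{2}\widetilde E$ is $\sqrt{k_1 k_2}$ with the two-block singular vectors listed in the preamble, the same corollary yields the value $\sqrt{k_1 k_2}$ and produces the singular vectors of $A$ by adjoining a zero block, matching the displayed vectors $\frac{1}{\sqrt{2k_1}}\begin{bmatrix} -\mathbf{1}_{k_1}\\ \mathbf{1}_{k_1}\\ 0\end{bmatrix}$ and $\frac{1}{\sqrt{2k_2}}\begin{bmatrix} \mathbf{1}_{k_2}\\ -\mathbf{1}_{k_2}\\ 0\end{bmatrix}$.
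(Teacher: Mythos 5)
Your proposal is correct and follows essentially the same route as the paper: reduce to $\widetilde E$ and $\widetilde A$ via Proposition \ref{Prop:Grammates for E from tilde of E}, collapse the two orthogonality conditions using the rank-one all-ones block structure to get (i), and then pull (ii)--(iv) through Propositions \ref{Prop4: E realizable iff wtE is realizable}--\ref{Prop:singular vecotrs of E and tilde of E} and Corollary \ref{Cor:A and tilde of A has Gram singular values} together with the computations in the preamble. The only difference is that you spell out the uniqueness of the mate $\widetilde A$ and the block collapse more explicitly than the paper does.
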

\begin{proof}
	Applying Proposition \ref{Prop:Grammates for E from tilde of E}, we have 
	\begin{align*}
	\begin{bmatrix}
	J_{k_2,k_1} & -J_{k_2,k_1}\\
	-J_{k_2,k_1} & J_{k_2,k_1}
	\end{bmatrix}\begin{bmatrix} X_1 \\ X_2\end{bmatrix}=0,\;\begin{bmatrix}
	X_3 & X_4
	\end{bmatrix}\begin{bmatrix}
	J_{k_2,k_1} & -J_{k_2,k_1}\\
	-J_{k_2,k_1} & J_{k_2,k_1}
	\end{bmatrix}=0.
	\end{align*}
	Since $J_{k_1,k_2}$ is an all ones matrix, we have $\mathbf{1}^TX_1=\mathbf{1}^TX_2$ and $X_3\mathbf{1}=X_4\mathbf{1}$. Applying Propositions \ref{Prop4: E realizable iff wtE is realizable}--\ref{Prop:singular vecotrs of E and tilde of E} and Corollary \ref{Cor:A and tilde of A has Gram singular values} with the argument immediately preceding this theorem, the desired conclusions follow.
\end{proof}

\begin{remark}
	Consider $A$ and $A+E$ in Theorem \ref{Thm:Gram mates of rank1}. Let $B=A+E$ and $k=k_1=k_2$. One can check that $AB=BA$ if and only if $A$ and $B$ are Gram mates. Suppose that $A$ and $B$ are diagonalizable. It is known (see \cite{Horn:MatrixAnalysis}) that $A$ and $B$ are simultaneously diagonalizable. Since $\mathrm{rank}(A-B)=1$, we have $|\sigma(A)-\sigma(B)|=1$ where $\sigma(A)$ and $\sigma(B)$ are multisets of eigenvalues of $A$ and $B$, respectively. It follows that $\sigma(A)-\sigma(B)=\{-k\}$ and $\sigma(B)-\sigma(A)=\{k\}$.
\end{remark}

\begin{example}\label{Example:rank1 Gram mates}
	Let $$A=\left[\begin{array}{cc|cc|ccc}
	0 & 0 & 1 & 1 & 1 & 1 & 0\\
	0 & 0 & 1 & 1 & 0 & 0 & 1\\\hline
	1 & 1 & 0 & 0 & 1 & 1 & 1\\
	1 & 1 & 0 & 0 & 0 & 0 & 0\\\hline
	1 & 0 & 1 & 0 & 1 & 1 & 1\\
	1 & 0 & 1 & 0 & 1 & 1 & 1\\
	0 & 1 & 1 & 0 & 1 & 1 & 1
	\end{array}\right],\;E=\left[\begin{array}{cc|cc|ccc}
	1 & 1 & -1 & -1 & 0 & 0 & 0\\
	1 & 1 & -1 & -1 & 0 & 0 & 0\\\hline
	-1 & -1 & 1 & 1 & 0 & 0 & 0\\
	-1 & -1 & 1 & 1 & 0 & 0 & 0\\\hline
	0 & 0 & 0 & 0 & 0 & 0 & 0\\
	0 & 0 & 0 & 0 & 0 & 0 & 0\\
	0 & 0 & 0 & 0 & 0 & 0 & 0
	\end{array}\right].$$ 
	Clearly, $E$ has rank $1$. By Theorem \ref{Thm:Gram mates of rank1}, $A$ and $A+E$ are Gram mates, and the Gram singular value is $2$. Moreover, $\frac{1}{2}\begin{bmatrix} \mathbf{1}_{2}\\-\mathbf{1}_{2}\\ \mathbf{0}_3\end{bmatrix}$ is a corresponding right singular vector.
\end{example}

\subsection{Gram mates via matrix of rank $2$}
We first show that given a $(0,1,-1)$ matrix $E=\begin{bmatrix}
\widetilde{E} & 0\\
0 & 0
\end{bmatrix}$ of rank $2$ with $E\mathbf{1}=0$ and $\mathbf{1}^TE=0^T$, $\widetilde{E}$ is in one of forms \ref{index m1}--\ref{index m5}. As done in Subsection \ref{Subsec:realizable rank 1}, we examine properties related to Gram mates via $\widetilde{E}$ for types \ref{index m4} and \ref{index m5}. We also show that the types \ref{index m1}--\ref{index m3} inherit the same properties from the type \ref{index m4}. For ease of exposition, we only present an interpretation from the viewpoint of the null space in order to find Gram mates via $E$ by using Gram mates via $\wtE$ with Proposition \ref{Prop:Grammates for E from tilde of E}.

Unless stated otherwise, we assume that a $(0,1,-1)$ matrix $E$ has neither a row of zeros nor a column of zeros. Suppose that a $(0,1,-1)$ matrix $E$ with $E\mathbf{1}=0$ and $\mathbf{1}^TE=0^T$ is of rank $2$. Then, there are two $(0,1,-1)$ rows $\bx_1^T$ and $\bx_2^T$ of $E$ that form a basis of $\mathrm{Row}(E)$. Without loss of generality,
\begin{align*}
\bx_1^T&=\left[\begin{array}{cccc|cccc}
\mathbf{1}_{\alpha_1}^T & \mathbf{1}_{\alpha_2}^T & -\mathbf{1}_{\alpha_3}^T & -\mathbf{1}_{\alpha_4}^T & \mathbf{1}_{\beta_1}^T & -\mathbf{1}_{\beta_2}^T & \mathbf{0}_{\beta_3}^T & \mathbf{0}_{\beta_4}^T \end{array}\right],\\
\bx_2^T&=\left[\begin{array}{cccc|cccc}
\mathbf{1}_{\alpha_1}^T & -\mathbf{1}_{\alpha_2}^T & \mathbf{1}_{\alpha_3}^T & -\mathbf{1}_{\alpha_4}^T & \mathbf{0}_{\beta_1}^T & \mathbf{0}_{\beta_2}^T & \mathbf{1}_{\beta_3}^T & -\mathbf{1}_{\beta_4}^T
\end{array}\right]
\end{align*}
where $\alpha_1+\alpha_2+\beta_1=\alpha_3+\alpha_4+\beta_2>0$, $\alpha_1+\alpha_3+\beta_3=\alpha_2+\alpha_4+\beta_4>0$, $\alpha_i,\beta_i\geq 0$ for $i=1,2,3, 4$.

Consider further conditions for the indices and pairs $(\alpha,\beta)$ such that $\alpha\bx_1+\beta\bx_2$ is a $(0,1,-1)$ vector for the following three cases: \begin{enumerate*}[label=(\roman*)]
	\item $\beta_1+\beta_2>0$ and $\beta_3+\beta_4>0$, \item either $\beta_1+\beta_2=0$ or $\beta_3+\beta_4=0$, \item $\beta_i=0$ for $i=1,\dots,4$.
\end{enumerate*}

\smallskip

\begin{itemize}[wide=0pt]
	\item Suppose that $\beta_1+\beta_2>0$ and $\beta_3+\beta_4>0$. Evidently, $\bx_1$ and $\bx_2$ are linearly independent. Since $\alpha\bx_1+\beta\bx_2$ is a $(0,1,-1)$ vector, we have $\alpha,\beta\in\{0,1,-1\}$. Considering $\bx_1^T\mathbf{1}=\bx_2^T\mathbf{1}=0$ and possible $(0,1,-1)$ linear combinations of $\bx_1$ and $\bx_2$, we have three subcases:
	\begin{enumerate}[label=(C\arabic*)]
		\item Suppose that $\alpha_1+\alpha_4>0$ and $\alpha_2+\alpha_3>0$. If $\alpha,\beta\in\{1,-1\}$, then $\alpha\bx_1+\beta\bx_2$ is not a $(0,1,-1)$ vector. So, $(\alpha,\beta)\in\{(\pm 1,0),(0,\pm 1)\}$. Moreover, we have $\alpha_1+\alpha_2+\beta_1=\alpha_3+\alpha_4+\beta_2$, $\alpha_1+\alpha_3+\beta_3=\alpha_2+\alpha_4+\beta_4$.
		\item If without loss of generality $\alpha_1+\alpha_4>0$ and $\alpha_2+\alpha_3=0$, then $\alpha_2=\alpha_3=0$, $\alpha_1+\beta_1=\alpha_4+\beta_2$, $\alpha_1+\beta_3=\alpha_4+\beta_4$ and $(\alpha,\beta)\in\{(\pm 1,0),(0,\pm 1),(\pm 1,\mp 1)\}$.
		\item If $\alpha_1+\alpha_4=0$ and $\alpha_2+\alpha_3=0$, then $\alpha_i=0$ for $i=1,\dots,4$, $\beta_1=\beta_2$, $\beta_3=\beta_4$ and $(\alpha,\beta)\in\{(\pm 1,0),(0,\pm 1),(\pm 1,\pm 1),(\pm 1,\mp 1)\}$.
	\end{enumerate}
	
	\smallskip
	
	\item Suppose that either $\beta_1+\beta_2=0$ or $\beta_3+\beta_4=0$. Without loss of generality, $\beta_1+\beta_2>0$ and $\beta_3+\beta_4=0$. For $\bx_1$ and $\bx_2$ to be linearly independent, $\alpha_1+\alpha_2+\alpha_3+\alpha_4>0$. Considering $\bx_1^T\mathbf{1}=\bx_2^T\mathbf{1}=0$ and possible $(0,1,-1)$ linear combinations of $\bx_1$ and $\bx_2$, we have two subcases:
	\begin{enumerate}[label=(C\arabic*)]\addtocounter{enumi}{+3}
		\item Let $\alpha_1+\alpha_4>0$ and $\alpha_2+\alpha_3>0$. By an analogous argument as in \ref{index c1}, $(\alpha,\beta)\in\{(\pm 1,0),(0,\pm 1)\}$. Furthermore, $\alpha_1+\alpha_2+\beta_1=\alpha_3+\alpha_4+\beta_2$, $\alpha_1+\alpha_3=\alpha_2+\alpha_4$.
		\item If without loss of generality, $\alpha_1+\alpha_4>0$ and $\alpha_2+\alpha_3=0$, then $\alpha_2=\alpha_3=0$, $\alpha_1=\alpha_4$, $\beta_1=\beta_2$ and $(\alpha,\beta)\in\{(\pm 1,0),(0,\pm 1),(\pm 1,\mp 1)\}$.
	\end{enumerate}
	\item Assume that $\beta_i=0$ for $i=1,\dots,4$. Then, we have a single subcase:
	\begin{enumerate}[label=(C\arabic*)]\addtocounter{enumi}{+5}
		\item  Obviously, $\alpha_1=\alpha_4$ and $\alpha_2=\alpha_3$. Since $\bx_1$ and $\bx_2$ are linearly independent, $\alpha_i>0$ for $i=1,\dots,4$. Moreover, $$(\alpha,\beta)\in\left\{(\pm 1, 0),(0,\pm 1),\left(\pm \frac{1}{2},\pm \frac{1}{2}\right),\left(\pm \frac{1}{2},\mp \frac{1}{2}\right)\right\}.$$
	\end{enumerate}
\end{itemize}

Summarizing the conditions for the indices and pairs $(\alpha,\beta)$ in each of the six subcases, they can be recast as:
\begin{enumerate}[label=(C\arabic*)]
	\item\label{index c1} $\alpha_1+\alpha_4>0$, $\alpha_2+\alpha_3>0$, $\beta_1+\beta_2>0$, $\beta_3+\beta_4>0$, $\alpha_1+\alpha_2+\beta_1=\alpha_3+\alpha_4+\beta_2$, $\alpha_1+\alpha_3+\beta_3=\alpha_2+\alpha_4+\beta_4$ and $(\alpha,\beta)\in\{(\pm 1,0),(0,\pm 1)\}$;
	\item\label{index c2} $\alpha_1+\alpha_4>0$, $\alpha_2=\alpha_3=0$, $\beta_1+\beta_2>0$, $\beta_3+\beta_4>0$, $\alpha_1+\beta_1=\alpha_4+\beta_2$, $\alpha_1+\beta_3=\alpha_4+\beta_4$, and $(\alpha,\beta)\in\{(\pm 1,0),(0,\pm 1),(\pm 1,\mp 1)\}$;
	\item\label{index c3} $\alpha_i=0$ for $i=1,\dots,4$, $\beta_1=\beta_2>0$, $\beta_3=\beta_4>0$, and $(\alpha,\beta)\in\{(\pm 1,0),\linebreak(0,\pm 1),(\pm 1,\pm 1),(\pm 1,\mp 1)\}$;
	\item\label{index c4} $\alpha_1+\alpha_4>0$, $\alpha_2+\alpha_3>0$,  $\beta_1+\beta_2>0$, $\beta_3=\beta_4=0$,  $\alpha_1+\alpha_2+\beta_1=\alpha_3+\alpha_4+\beta_2$, $\alpha_1+\alpha_3=\alpha_2+\alpha_4$, and $(\alpha,\beta)\in\{(\pm 1,0),(0,\pm 1)\}$;	
	\item\label{index c5} $\alpha_1=\alpha_4>0$, $\alpha_2=\alpha_3=0$, $\beta_1=\beta_2>0$, $\beta_3=\beta_4=0$, and $(\alpha,\beta)\in\{(\pm 1,0),(0,\pm 1),(\pm 1,\mp 1)\}$;
	\item\label{index c6} $\alpha_1=\alpha_4>0$, $\alpha_2=\alpha_3>0$, $\beta_i=0$ for $i=1,\dots,4$, and $(\alpha,\beta)\in\{(\pm 1, 0),\linebreak(0,\pm 1),(\pm \frac{1}{2},\pm \frac{1}{2}),(\pm \frac{1}{2},\mp \frac{1}{2})\}$.	
\end{enumerate}

Now, we shall see that any $(0,1,-1)$ matrix $E$ with each of the conditions \ref{index c1}--\ref{index c6} corresponds to one of the following types \ref{index m1}--\ref{index m5} (up to transposition and permutation of rows and columns). Unless stated otherwise, we assume that all the indices of each block in matrices of types \ref{index m1}--\ref{index m5} are nonnegative and each of their row and column sum vectors is zero.
\begin{enumerate}[label=(M\arabic*)]
	\item\label{index m1}$\begin{bmatrix}
	J_{k,a} & J_{k,b} & -J_{k,b} & -J_{k,a} \\
	-J_{k,a} & -J_{k,b} & J_{k,b} & J_{k,a} \\
	J_{l,a} & -J_{l,b} & J_{l,b} & -J_{l,a} \\
	-J_{l,a} & J_{l,b} & -J_{l,b} & J_{l,a}
	\end{bmatrix}$ where all indices of each block are positive.\vspace{0.3cm}
	\item\label{index m2}$\begin{bmatrix}
	J_{k,e} & -J_{k,f} & 0 & 0 \\
	-J_{k,e} & J_{k,f} & 0 & 0 \\
	0 & 0 & J_{l,g} & -J_{l,h} \\
	0 & 0 & -J_{l,g} & J_{l,h} 
	\end{bmatrix}$ where all indices of each block are positive.\vspace{0.3cm} 
	\item\label{index m3}$\begin{bmatrix}
	J_{k,a} & J_{k,b} & -J_{k,c} & -J_{k,d} & J_{k,e} & -J_{k,f} \\
	-J_{k,a} & -J_{k,b} & J_{k,c} & J_{k,d} & -J_{k,e} & J_{k,f}\\
	J_{l,a} & -J_{l,b} & J_{l,c} & -J_{l,d} & 0 & 0 \\
	-J_{l,a} & J_{l,b} & -J_{l,c} & J_{l,d} & 0 & 0 \\
	\end{bmatrix}$ where $k,l>0$, $e+f>0$ and $a+b+c+d>0$.\vspace{0.3cm}
	\item\label{index m4}$\begin{bmatrix}
	J_{k,a} & J_{k,b} & -J_{k,c} & -J_{k,d} & J_{k,e} & -J_{k,f} & 0 & 0 \\
	-J_{k,a} & -J_{k,b} & J_{k,c} & J_{k,d} & -J_{k,e} & J_{k,f} & 0 & 0 \\
	J_{l,a} & -J_{l,b} & J_{l,c} & -J_{l,d} & 0 & 0 & J_{l,g} & -J_{l,h} \\
	-J_{l,a} & J_{l,b} & -J_{l,c} & J_{l,d} & 0 & 0 & -J_{l,g} & J_{l,h} \\
	\end{bmatrix}$ where $k,l>0$, $a+b+c+d>0$, $e+f>0$, and $g+h>0$.\vspace{0.3cm}
	\item\label{index m5}$\begin{bmatrix}
	J_{k,a} & -J_{k,b}  & J_{k,c} & -J_{k,d} & 0 & 0 \\
	-J_{l,a} & J_{l,b} & -J_{l,c} & J_{l,d} & 0 & 0 \\
	J_{p,a} & -J_{p,b}  & 0 & 0 & J_{p,e} & -J_{p,f} \\
	-J_{q,a} & J_{q,b} & 0 & 0 & -J_{q,e} & J_{q,f} \\
	0 & 0 & J_{r,c} & -J_{r,d} & -J_{r,e} & J_{r,f} \\
	0 & 0 & -J_{s,c} & J_{s,d} & J_{s,e} & -J_{s,f} \\
	\end{bmatrix}$ where $a+b$, $c+d$, $e+f$, $k+l$, $p+q$ and $r+s$ are positive.\vspace{0.3cm}
\end{enumerate}

Let us consider $\bx_1^T$ and $\bx_2^T$ where the condition \ref{index c6} holds. Suppose that $\alpha\bx^T_1+\beta\bx^T_2$ is a $(0,1,-1)$ vector for some $\alpha$ and $\beta$. Since $$(\alpha,\beta)\in\left\{(\pm 1, 0),(0,\pm 1),\left(\pm \frac{1}{2},\pm \frac{1}{2}\right),\left(\pm \frac{1}{2},\mp \frac{1}{2}\right)\right\},$$ we have four distinct $(0,1,-1)$ rows up to sign as follows:
\begin{align*}
\bx_1^T=\left[\begin{array}{cccc}
\mathbf{1}_{\alpha_1}^T  & \mathbf{1}_{\alpha_2}^T & -\mathbf{1}_{\alpha_2}^T & -\mathbf{1}_{\alpha_1}^T \end{array}\right]&,\;\frac{1}{2}(\bx_1+\bx_2)^T=\left[\begin{array}{cccc}
\mathbf{1}_{\alpha_1}^T & \mathbf{0}_{\alpha_2}^T & \mathbf{0}_{\alpha_2}^T & -\mathbf{1}_{\alpha_1}^T \end{array}\right],\\
\bx_2^T=\left[\begin{array}{cccc}
\mathbf{1}_{\alpha_1}^T & -\mathbf{1}_{\alpha_2}^T & \mathbf{1}_{\alpha_2}^T & -\mathbf{1}_{\alpha_1}^T \end{array}\right]&,\;\frac{1}{2}(\bx_1-\bx_2)^T=\left[\begin{array}{cccc}
\mathbf{0}_{\alpha_1}^T & \mathbf{1}_{\alpha_2}^T & -\mathbf{1}_{\alpha_2}^T & \mathbf{0}_{\alpha_1}^T \end{array}\right].
\end{align*}
Consider all possible combinations of the four row vectors that span the row space of a $(0,1,-1)$ matrix $E$ such that $\mathrm{rank}(E)=2$, $E\mathbf{1}=0$ and $\mathbf{1}^TE=0^T$. Suppose that $\bx_1^T$ and $\frac{1}{2}(\bx_1+\bx_2)^T$ are the only distinct rows in $E$ up to sign. Then, we generate $E'$ from $E$ by permuting rows as follows:
$$E'=\begin{bmatrix}
J_{\gamma_1,\alpha_1} & J_{\gamma_1,\alpha_2} & -J_{\gamma_1,\alpha_2} & -J_{\gamma_1,\alpha_1} \\
-J_{\gamma_2,\alpha_1} & -J_{\gamma_2,\alpha_2} & J_{\gamma_2,\alpha_2} & J_{\gamma_2,\alpha_1} \\
J_{\gamma_3,\alpha_1} & 0 & 0 & -J_{\gamma_3,\alpha_1} \\
-J_{\gamma_4,\alpha_1} & 0 & 0 & J_{\gamma_4,\alpha_1}
\end{bmatrix}$$
for some $\gamma_i\geq 0$ for $i=1,\dots,4$. Since $\mathbf{1}^TE'=0^T$, we have $\gamma_1=\gamma_2$ and $\gamma_3=\gamma_4$. By $\mathrm{rank}(E')=2$, $\gamma_i>0$ for $i=1,\dots,4$. Taking the transpose of $E'$ and permuting rows of $(E')^T$, we find that the resulting matrix is of type \ref{index m3}. Similarly, one can check that for the other choices among the four rows, $E$ must be of one of types \ref{index m1}--\ref{index m4} (up to transposition and permutation of rows and columns). 

Given $\bx_1^T$ and $\bx_2^T$ with \ref{index c2}, we have $(\alpha,\beta)\in\{(\pm 1,0),(0,\pm 1),(\pm 1,\mp 1)\}$ so that there are three distinct rows up to sign:
\begin{align*}
\bx_1^T&=\left[\begin{array}{cccccc}
\mathbf{1}_{\alpha_1}^T  & -\mathbf{1}_{\alpha_4}^T & \mathbf{1}_{\beta_1}^T & -\mathbf{1}_{\beta_2}^T & \mathbf{0}_{\beta_3}^T & \mathbf{0}_{\beta_4}^T \end{array}\right],\\
\bx_2^T&=\left[\begin{array}{cccccc}
\mathbf{1}_{\alpha_1}^T  & -\mathbf{1}_{\alpha_4}^T & \mathbf{0}_{\beta_1}^T & \mathbf{0}_{\beta_2}^T & \mathbf{1}_{\beta_3}^T & -\mathbf{1}_{\beta_4}^T \end{array}\right],\\
(\bx_1-\bx_2)^T&=\left[\begin{array}{cccccc}
\mathbf{0}_{\alpha_1}^T  & \mathbf{0}_{\alpha_4}^T & \mathbf{1}_{\beta_1}^T & -\mathbf{1}_{\beta_2}^T & -\mathbf{1}_{\beta_3}^T & \mathbf{1}_{\beta_4}^T \end{array}\right].
\end{align*}
Then, if the rows of $E$ consists of rows $\pm\bx_1^T$ and $\pm\bx_2^T$, then $E$ is of form \ref{index m4}; if $E$ consists of rows $\pm\bx_1^T$, $\pm\bx_2^T$ and $\pm(\bx_1-\bx_2)^T$, then $E$ is of form \ref{index m5}.

In this manner, one can verify that any $(0,1,-1)$ matrix $E$ whose either row space or column space spanned by $\bx_1$ and $\bx_2$ with \ref{index c1} is in type \ref{index m4}; $E$ with \ref{index c3} is in one of types\ref{index m1}--\ref{index m4}; $E$ with \ref{index c4} is in type \ref{index m3}; and $E$ with \ref{index c5} is in one of types \ref{index m2}--\ref{index m4}.

Here is a useful lemma for characterizing Gram mates via $E$ of rank $2$.

\begin{lemma}\label{Lemma:JY+XJ}
	Let $X_1$, $X_2$, $Y_1$ and $Y_2$ be matrices of sizes $k\times a$, $k\times b$, $l\times c$ and $l\times d$, respectively. Then, $J_{k,c}Y_1^T+J_{k,d}Y_2^T+X_1J_{a,l}+X_2J_{b,l}=\alpha J_{k,l}$ if and only if $X_1\mathbf{1}_a+X_2\mathbf{1}_b=x\mathbf{1}_k$ and $Y_1\mathbf{1}_c+Y_2\mathbf{1}_d=y\mathbf{1}_l$, where $x+y=\alpha$.
\end{lemma}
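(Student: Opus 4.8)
The plan is to exploit the rank-one structure hidden in every summand. Writing $J_{n,m}=\mathbf{1}_n\mathbf{1}_m^T$, I would first rewrite each of the four terms on the left-hand side as an outer product: $J_{k,c}Y_1^T=\mathbf{1}_k(Y_1\mathbf{1}_c)^T$, $J_{k,d}Y_2^T=\mathbf{1}_k(Y_2\mathbf{1}_d)^T$, $X_1J_{a,l}=(X_1\mathbf{1}_a)\mathbf{1}_l^T$, and $X_2J_{b,l}=(X_2\mathbf{1}_b)\mathbf{1}_l^T$. Setting $\mathbf{q}=X_1\mathbf{1}_a+X_2\mathbf{1}_b\in\mathbb{R}^k$ and $\mathbf{p}=Y_1\mathbf{1}_c+Y_2\mathbf{1}_d\in\mathbb{R}^l$, the whole left-hand side collapses to $\mathbf{1}_k\mathbf{p}^T+\mathbf{q}\mathbf{1}_l^T$, so the asserted matrix identity is equivalent to $\mathbf{1}_k\mathbf{p}^T+\mathbf{q}\mathbf{1}_l^T=\alpha\,\mathbf{1}_k\mathbf{1}_l^T$.

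Next I would read this off entrywise: the $(i,j)$ entry of the last identity is $p_j+q_i=\alpha$, required for every $i\in\{1,\dots,k\}$ and $j\in\{1,\dots,l\}$. For the forward direction, fixing any single column index $j$ forces $q_i=\alpha-p_j$ to be independent of $i$, so $\mathbf{q}=x\mathbf{1}_k$ for a scalar $x$; symmetrically $\mathbf{p}=y\mathbf{1}_l$ for a scalar $y$; and substituting back into $p_j+q_i=\alpha$ gives $x+y=\alpha$. Unwinding the definitions of $\mathbf{p}$ and $\mathbf{q}$ recovers exactly $X_1\mathbf{1}_a+X_2\mathbf{1}_b=x\mathbf{1}_k$ and $Y_1\mathbf{1}_c+Y_2\mathbf{1}_d=y\mathbf{1}_l$ with $x+y=\alpha$.

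For the converse, I would simply substitute $\mathbf{q}=x\mathbf{1}_k$ and $\mathbf{p}=y\mathbf{1}_l$ into $\mathbf{1}_k\mathbf{p}^T+\mathbf{q}\mathbf{1}_l^T$, obtaining $(x+y)\mathbf{1}_k\mathbf{1}_l^T=\alpha J_{k,l}$. There is no real obstacle here beyond bookkeeping; the only point demanding care is the separation-of-variables step, where one must note that $p_j+q_i=\alpha$ holding for all $i,j$ (with $k,l\geq1$) is precisely what forces both $\mathbf{p}$ and $\mathbf{q}$ to be constant vectors rather than merely constraining their sum. The rank-one rewriting is what makes everything transparent, and getting those four factorizations and their dimensions straight is the bulk of the work.
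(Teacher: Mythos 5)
Your proposal is correct and follows essentially the same route as the paper: both reduce the left-hand side to a sum of a constant-row matrix $\mathbf{1}_k\mathbf{p}^T$ and a constant-column matrix $\mathbf{q}\mathbf{1}_l^T$ (the paper writes these as stacked rows/repeated columns rather than outer products) and then argue entrywise that $p_j+q_i=\alpha$ for all $i,j$ forces $\mathbf{p}$ and $\mathbf{q}$ to be constant vectors with $x+y=\alpha$. The outer-product bookkeeping and the separation-of-variables step are exactly the content of the paper's argument, and your converse by direct substitution matches the paper's ``the converse is straightforward.''
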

\begin{proof}
	Consider the sufficiency of the statements. Suppose that $\bx_1$, $\bx_2$, $\by_1$ and $\by_2$ are row sum vectors of $X_1$, $X_2$, $Y_1$ and $Y_2$, respectively. Then, we have $\begin{bmatrix}
	\by_1^T+\by_2^T\\
	\vdots\\
	\by_1^T+\by_2^T
	\end{bmatrix}+\begin{bmatrix}
	\bx_1+\bx_2 & \cdots & \bx_1+\bx_2
	\end{bmatrix}= \alpha J$. Let $(\bx)_i$ be the $i^{\text{th}}$ component of $\bx$. Considering the $j^\text{th}$ columns of the both sides, $\bx_1+\bx_2+(\by_1+\by_2)_j\mathbf{1}=\alpha\mathbf{1}$. Then, $\bx_1+\bx_2=x\mathbf{1}$ for some $x$. Similarly, from the rows of both sides, it can be deduced that $\by_1+\by_2=y\mathbf{1}$ for some $y$. Hence, the equation $J_{k,c}Y_1^T+J_{k,d}Y_2^T+X_1J_{a,l}+X_2J_{b,l}=\alpha J_{k,l}$ can be recast as $yJ+xJ=\alpha J$, and so $x+y=\alpha$. 
	
	The converse is straightforward.
\end{proof}

\subsubsection{Realizable matrices in the form \ref{index m1}--\ref{index m4}}

In this subsection, we first focus on Gram mates via a matrix of the form \ref{index m4}. It is shown that the cases \ref{index m1}--\ref{index m3} are special cases of \ref{index m4}.

\begin{lemma}\label{Lemma:Grammates A+B=J}
	Let $A$ and $B$ be $m\times n$ $(0,1)$ matrices such that $A+B=J$. Then, $AA^T=BB^T$ if and only if $n$ is even, and $A\mathbf{1}=B\mathbf{1}=\frac{n}{2}\mathbf{1}$. Similarly, $A^TA=B^TB$ if and only if $m$ is even, and $\mathbf{1}^TA=\mathbf{1}^TB=\frac{m}{2}\mathbf{1}m^T$.
\end{lemma}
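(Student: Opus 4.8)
The plan is to eliminate $B$ via the hypothesis $A+B=J$, expand the Gram matrix $BB^T$, and compare it term by term with $AA^T$. Writing $B=J-A$ with $J=J_{m,n}=\mathbf{1}_m\mathbf{1}_n^T$, I would compute
$$BB^T=(J-A)(J-A)^T=JJ^T-JA^T-AJ^T+AA^T.$$
Introducing the row-sum vector $\mathbf{r}=A\mathbf{1}_n$, a direct check of the entries shows $JJ^T=nJ_m$, $\,AJ^T=\mathbf{r}\mathbf{1}_m^T$, and $JA^T=\mathbf{1}_m\mathbf{r}^T$. Consequently $AA^T=BB^T$ if and only if the remaining rank-two correction vanishes, that is,
$$nJ_m=\mathbf{1}_m\mathbf{r}^T+\mathbf{r}\mathbf{1}_m^T.$$

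The heart of the argument is then to read this matrix identity off entrywise. Its $(i,k)$ entry gives $r_i+r_k=n$ for all $i,k$; taking $i=k$ forces $2r_i=n$, so every row sum equals $n/2$. In particular $n$ must be even. Conversely, if $n$ is even and $\mathbf{r}=\tfrac{n}{2}\mathbf{1}_m$, then substituting back gives $\mathbf{1}_m\mathbf{r}^T+\mathbf{r}\mathbf{1}_m^T=\tfrac{n}{2}J_m+\tfrac{n}{2}J_m=nJ_m$, so $AA^T=BB^T$. Finally, since $B\mathbf{1}_n=(J-A)\mathbf{1}_n=n\mathbf{1}_m-\mathbf{r}$, the condition $\mathbf{r}=\tfrac{n}{2}\mathbf{1}_m$ yields $A\mathbf{1}_n=B\mathbf{1}_n=\tfrac{n}{2}\mathbf{1}_m$, which completes the first equivalence.

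For the second equivalence I would simply invoke the first statement applied to $A^T$ and $B^T$. Since $A^T+B^T=J^T=J_{n,m}$, the already-established result (with the roles of $m$ and $n$ interchanged) says that $A^TA=(A^T)(A^T)^T=(B^T)(B^T)^T=B^TB$ holds if and only if $m$ is even and $A^T\mathbf{1}_m=\tfrac{m}{2}\mathbf{1}_n$, i.e. $\mathbf{1}_m^TA=\mathbf{1}_m^TB=\tfrac{m}{2}\mathbf{1}_n^T$. I do not expect any serious obstacle in this lemma; the only point requiring care is the bookkeeping of matrix sizes in the products $JJ^T$, $JA^T$, $AJ^T$ so that the reduction to the single identity $nJ_m=\mathbf{1}_m\mathbf{r}^T+\mathbf{r}\mathbf{1}_m^T$ is carried out correctly.
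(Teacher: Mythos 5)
Your proposal is correct and follows essentially the same route as the paper: the key step in both is the expansion $(J-A)(J-A)^T=JJ^T-JA^T-AJ^T+AA^T$ together with the observation that the row sums must all equal $n/2$ (your entrywise identity $r_i+r_k=n$ at $i=k$ is exactly the paper's comparison of diagonal entries, i.e.\ of row sum vectors of the $(0,1)$ matrices). The transposition argument for the second equivalence likewise matches the paper's ``similarly'' step.
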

\begin{proof}
	Suppose that $AA^T=BB^T$ and $A+B=J$. Since $(0,1)$ matrices $A$ and $B$ have the same row sum vector, $n$ is even and $A\mathbf{1}=B\mathbf{1}=\frac{n}{2}\mathbf{1}$. Conversely, assume that $n$ is even, and $A\mathbf{1}=B\mathbf{1}=\frac{n}{2}$. Then,
	\begin{align*}
	AA^T&=(J-B)(J-B)^T\\
	&=JJ^T-BJ^T-JB^T+BB^T=nJ-\frac{n}{2}J-\frac{n}{2}J+BB^T=BB^T.
	\end{align*}
	Similarly, one can deduce the remaining conclusions.
\end{proof}

\begin{remark}\label{Remark:Grammates M1}
	Suppose that $E$ is of form \ref{index m1}. By Lemma \ref{Lemma:Grammates A+B=J}, $(A,B)=(\frac{1}{2}(J+E),\frac{1}{2}(J-E))$ is a pair of Gram mates. Moreover, it is the only pair of Gram mates via $E$.
\end{remark}

\begin{lemma}\label{Lemma:supporting theorem of rank 21}
	Let $A=\begin{bmatrix}
	A_1 & A_2 & X\\
	A_3 & Y & A_4
	\end{bmatrix}$ and $B=\begin{bmatrix}
	B_1 & B_2 & X\\
	B_3 & Y & B_4
	\end{bmatrix}$ be $(0,1)$ matrices. Suppose that $(A-B)\mathbf{1}=0$, and $\mathbf{1}^TA_i=\mathbf{1}^TB_i$, $A_i+B_i=J$ for $i=1,\dots 4$. Then, $A$ and $B$ are Gram mates if and only if 
	\begin{align}\label{rank2:conditions1}
	&(B_2-A_2)Y^T+X(B_4^T-A_4^T)=A_1J^T+JA_3^T-JJ^T\\\label{rank2:conditions2}
	&(B_1^T-A_1^T)X=0,\\\label{rank2:conditions3}
	&(B_3^T-A_3^T)Y=0,\\\label{rank2:conditions4}
	&(B_2^T-A_2^T)X+Y^T(B_4-A_4)=0.
	\end{align}
\end{lemma}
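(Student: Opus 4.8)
The plan is to expand both Gram conditions $AA^T=BB^T$ and $A^TA=B^TB$ into the block forms dictated by the partitions of $A$ and $B$, and then to show that every diagonal block holds automatically under the hypotheses while the off-diagonal blocks are precisely the four displayed equations. Write $m_1,m_2$ for the row-block sizes and $n_1,n_2,n_3$ for the column-block sizes. Before expanding, I would record two consequences of the hypotheses. Combining $A_i+B_i=J$ with $\mathbf{1}^TA_i=\mathbf{1}^TB_i$ yields $2\mathbf{1}^TA_i=\mathbf{1}^TJ$, so each $A_i$ and $B_i$ has every column sum equal to half its number of rows; and after cancelling the shared blocks, $(A-B)\mathbf{1}=0$ splits into $A_1\mathbf{1}+A_2\mathbf{1}=B_1\mathbf{1}+B_2\mathbf{1}$ and $A_3\mathbf{1}+A_4\mathbf{1}=B_3\mathbf{1}+B_4\mathbf{1}$.

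For $AA^T=BB^T$, the shared block $X$ (resp. $Y$) cancels in the $(1,1)$ (resp. $(2,2)$) diagonal block, leaving $A_1A_1^T+A_2A_2^T=B_1B_1^T+B_2B_2^T$ and the analogous identity for $A_3,A_4$. Setting $P=[A_1\ A_2]$ and $Q=[B_1\ B_2]$, one has $P+Q=J$, and the row-sum identity above gives $P\mathbf{1}=Q\mathbf{1}$; hence $P\mathbf{1}=Q\mathbf{1}=\tfrac12 J\mathbf{1}$, and Lemma~\ref{Lemma:Grammates A+B=J} forces $PP^T=QQ^T$. The $(2,2)$ block is disposed of identically. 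Thus $AA^T=BB^T$ reduces to its single off-diagonal block $A_1A_3^T+A_2Y^T+XA_4^T=B_1B_3^T+B_2Y^T+XB_4^T$; substituting $B_i=J-A_i$, cancelling $A_1A_3^T$, and rearranging turns this into exactly \eqref{rank2:conditions1}.

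For $A^TA=B^TB$, the three diagonal blocks are handled the same way: after cancelling $X^TX$ or $Y^TY$ one reads, for instance, $A_1^TA_1+A_3^TA_3=B_1^TB_1+B_3^TB_3$, and applying the transpose form of Lemma~\ref{Lemma:Grammates A+B=J} to the column-stacked pair formed from $A_1,A_3$ and from $B_1,B_3$ (whose sum is $J$ and whose equal column sums are half the row count) gives the equality for free. Three off-diagonal blocks remain. Substituting $B_i=J-A_i$ into the $(2,3)$ block gives $(A_2^T-B_2^T)X+Y^T(A_4-B_4)=0$, which is \eqref{rank2:conditions4}. For the $(1,2)$ and $(1,3)$ blocks the substitution produces, besides the terms of \eqref{rank2:conditions3} and \eqref{rank2:conditions2}, residual all-ones combinations $J^TJ-J^TA_2-A_1^TJ$ and $J^TJ-J^TA_4-A_3^TJ$. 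The decisive point is that these residuals vanish: because every column sum of $A_1,A_2$ equals $\tfrac{m_1}{2}$, we get $J^TA_2=A_1^TJ=\tfrac{m_1}{2}J$ while $J^TJ=m_1J$, so the first residual is $0$, and the column sums $\tfrac{m_2}{2}$ for $A_3,A_4$ kill the second. What survives is $(A_3^T-B_3^T)Y=0$ and $(A_1^T-B_1^T)X=0$, i.e. \eqref{rank2:conditions3} and \eqref{rank2:conditions2}.

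The remaining lower-triangular blocks of each Gram product are transposes of blocks already treated, and $A_i+B_i=J$ with $(0,1)$ entries forces $A_i\neq B_i$, so $A\neq B$; collecting the equivalences shows $A$ and $B$ are Gram mates exactly when \eqref{rank2:conditions1}--\eqref{rank2:conditions4} all hold. I expect the main obstacle to be precisely the residual computation in the last paragraph: verifying that the all-ones terms in the $(1,2)$ and $(1,3)$ blocks of $A^TA=B^TB$ cancel is where the column-sum normalization $\mathbf{1}^TA_i=\tfrac12\mathbf{1}^TJ$ (derived from $\mathbf{1}^TA_i=\mathbf{1}^TB_i$ together with $A_i+B_i=J$) must be invoked, and keeping the block bookkeeping straight so that each surviving equation matches the intended displayed condition.
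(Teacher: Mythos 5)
Your proposal is correct and follows essentially the same route as the paper: expand both Gram products blockwise, dispose of the diagonal blocks via Lemma \ref{Lemma:Grammates A+B=J}, and show the residual all-ones terms $J^TJ-A_1^TJ-J^TA_2$ and $J^TJ-A_3^TJ-J^TA_4$ vanish from the column-sum normalization, leaving exactly \eqref{rank2:conditions1}--\eqref{rank2:conditions4}. The only cosmetic difference is that the paper applies the lemma to each pair $(A_i,B_i)$ individually for the diagonal blocks of $A^TA$ rather than to the column-stacked pairs, which is an equivalent bookkeeping choice.
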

\begin{proof}
	Since $(A-B)\mathbf{1}=0$, we have $$\begin{bmatrix}
	A_1 & A_2
	\end{bmatrix}\mathbf{1}=\begin{bmatrix}
	B_1 & B_2
	\end{bmatrix}\mathbf{1}\;\text{and}\;\begin{bmatrix}
	A_3 & A_4
	\end{bmatrix}\mathbf{1}=\begin{bmatrix}
	B_3 & B_4
	\end{bmatrix}\mathbf{1}.$$ 
	Applying Lemma \ref{Lemma:Grammates A+B=J} to pairs $(\begin{bmatrix}
	A_1 & A_2
	\end{bmatrix},\begin{bmatrix}
	B_1 & B_2
	\end{bmatrix})$ and $(\begin{bmatrix}
	A_3 & A_4
	\end{bmatrix},\begin{bmatrix}
	B_3 & B_4
	\end{bmatrix})$, we obtain $A_1A_1^T+A_2A_2^T=B_1B_1^T+B_2B_2^T$ and $A_3A_3^T+A_4A_4^T=B_3B_3^T+B_4B_4^T$. Let the numbers of rows of $A_1$ and $A_3$ be $k$ and $l$, respectively. Since $\mathbf{1}^TA_i=\mathbf{1}^TB_i$ and $A_i+B_i=J$ for $i=1,\dots 4$, the numbers $k$ and $l$ are even, $\mathbf{1}^TA_1=\mathbf{1}^TA_2=\frac{k}{2}\mathbf{1}^T$ and $\mathbf{1}^TA_3=\mathbf{1}^TA_4=\frac{l}{2}\mathbf{1}^T$. From $B_i=J-A_i$ for $i=1,\dots 4$, we have
	\begin{align}\label{temp:lemma4:1}
	B_1B_3^T-A_1A_3^T&=(J-A_1)(J-A_3)^T-A_1A_3^T=JJ^T-A_1J^T-JA_3^T,\\\label{temp:lemma4:2}
	B_1^TB_2-A_1^TA_2&=(J-A_1)^T(J-A_2)-A_1^TA_2=J^TJ-A_1^TJ-J^TA_2=0,\\\label{temp:lemma4:3}
	B_3^TB_4-A_3^TA_4&=(J-A_3)^T(J-A_4)-A_3^TA_4=J^TJ-A_3^TJ-J^TA_4=0.
	\end{align}
	Furthermore, using Lemma \ref{Lemma:Grammates A+B=J} for each pair $(A_i,B_i)$ for $i=1,\dots,4$, we obtain $A_i^TA_i=B_i^TB_i$. 
	
	One can check that
	$AA^T=BB^T$ if and only if
	\begin{align*}
	A_1A_1^T+A_2A_2^T+XX^T&=B_1B_1^T+B_2B_2^T+XX^T,\\
	A_1A_3^T+A_2Y^T+XA_4^T&=B_1B_3^T+B_2Y^T+XB_4^T,\\
	A_3A_3^T+A_4A_4^T+YY^T&=B_3B_3^T+B_4B_4^T+YY^T.
	\end{align*}
	Using \eqref{temp:lemma4:1}, $A_1A_1^T+A_2A_2^T=B_1B_1^T+B_2B_2^T$ and $A_3A_3^T+A_4A_4^T=B_3B_3^T+B_4B_4^T$, we find that $AA^T=BB^T$ if and only if $(B_2-A_2)Y^T+X(B_4^T-A_4^T)=A_1J^T+JA_3^T-JJ^T$.
	
	One can verify that $A^TA=B^TB$ if and only if
	\begin{align*}
	&A_1^TA_1+A_3^TA_3=B_1^TB_1+B_3^TB_3,\;A_2^TA_2+Y^TY=B_2^TB_2+Y^TY, \\
	&A_4^TA_4+X^TX=B_4^TB_4+X^TX,\; A_1^TA_2+A_3^TY=B_1^TB_2+B_3^TY, \\
	&A_1^TX+A_3^TA_4=B_1^TX+B_3^TB_4,\; A_2^TX+Y^TA_4=B_2^TX+Y^TB_4.
	\end{align*}
	By \eqref{temp:lemma4:2}, \eqref{temp:lemma4:3} and the fact that $A_i^TA_i=B_i^TB_i$ for $i=1,\dots,4$, the desired conclusion follows.
\end{proof}

\begin{remark}\label{Remark:types M2 and M3}
	Let $C$ and $D$ be Gram mates, and let $\alpha$ and $\beta$ be sets of some row and column indices, respectively. Then, $C[\alpha,\beta]$ and $D[\alpha,\beta]$ are not necessarily Gram mates. That is, the submatrices do not necessarily inherit properties of being Gram mates from $C$ and $D$. However, the matrices with the hypothesis in Lemma \ref{Lemma:supporting theorem of rank 21} yield the following submatrices with inherited properties in the matrices.
	
	Let $A=\begin{bmatrix}
	A_1 & A_2\\
	A_3 & Y
	\end{bmatrix}$ and $B=\begin{bmatrix}
	B_1 & B_2\\
	B_3 & Y 
	\end{bmatrix}$. Suppose that $(A-B)\mathbf{1}=0$, and $\mathbf{1}^TA_i=\mathbf{1}^TB_i$, $A_i+B_i=J$ for $i=1,2,3$. It can be found from Definition \ref{Def:GramMates} that $A$ and $B$ are Gram mates if and only if $(B_2-A_2)Y^T=A_1J^T+JA_3^T-JJ^T$ and $(B_3^T-A_3^T)Y=0$. Then, the equivalent condition for Gram mates $A$ and $B$ is the same as that obtained from the conditions \eqref{rank2:conditions1}--\eqref{rank2:conditions4} in Lemma \ref{Lemma:supporting theorem of rank 21} by removing the terms containing $A_4$, $B_4$ or $X$. 
	
	Similarly, given $A=\begin{bmatrix}
	A_2 & X\\
	Y & A_4
	\end{bmatrix}$ and $B=\begin{bmatrix}
	B_2 & X\\
	Y & B_4
	\end{bmatrix}$ where $A_i\mathbf{1}=B_i\mathbf{1}$, $\mathbf{1}^TA_i=\mathbf{1}^TB_i$ and $A_i+B_i=J$ for $i=2,4$, we can find from Definition \ref{Def:GramMates} that $A$ and $B$ are Gram mates if and only if $(B_2-A_2)Y^T+X(B_4^T-A_4^T)=0$ and $(B_2^T-A_2^T)X+Y^T(B_4-A_4)=0$. Then, the equivalent condition for Gram mates $A$ and $B$ can be also obtained by annihilating the terms having $A_1$, $A_3$, $B_1$ or $B_3$ in the conditions \eqref{rank2:conditions1}--\eqref{rank2:conditions4} in Lemma \ref{Lemma:supporting theorem of rank 21}.
	
	Therefore, equivalent conditions for Gram mates via matrices of forms \ref{index m2} and \ref{index m3} can be induced by those for Gram mates via matrices of form \ref{index m4}.
\end{remark}

\begin{theorem}\label{Thm:Gram mates rank 21}
	Let $$E=\begin{bmatrix}
	J_{k,a} & J_{k,b} & -J_{k,c} & -J_{k,d} & J_{k,e} & -J_{k,f} & 0 & 0 \\
	-J_{k,a} & -J_{k,b} & J_{k,c} & J_{k,d} & -J_{k,e} & J_{k,f} & 0 & 0 \\
	J_{l,a} & -J_{l,b} & J_{l,c} & -J_{l,d} & 0 & 0 & J_{l,g} & -J_{l,h} \\
	-J_{l,a} & J_{l,b} & -J_{l,c} & J_{l,d} & 0 & 0 & -J_{l,g} & J_{l,h} \\
	\end{bmatrix}$$
	where any column index in each block is a nonnegative integer, and $k,l>0$, $a+b+c+d>0$, $e+f>0$, $g+h>0$, $E\mathbf{1}=0$ and $\mathbf{1}^TE=0^T$. Let 
	\begin{align}\label{matrix A regarding M3}
	A=\begin{bmatrix}
	0 & 0 & J_{k,c} & J_{k,d} & 0 & J_{k,f} & X_{11} & X_{12} \\
	J_{k,a} & J_{k,b} & 0 & 0 & J_{k,e} & 0 & X_{21} & X_{22} \\
	0 & J_{l,b} & 0 & J_{l,d} & Y_{11} & Y_{12} & 0 & J_{l,h} \\
	J_{l,a} & 0 & J_{l,c} & 0 & Y_{21} & Y_{22} & J_{l,g} & 0 \\
	\end{bmatrix}
	\end{align}
	where each block of $A$ is a $(0,1)$ matrix. Then, $A$ and $A+E$ are Gram mates if and only if the following conditions are satisfied: 
	\begin{enumerate}[label=(\roman*)]
		\item \label{tempc1} $\mathbf{1}^TX_{1i}=\mathbf{1}^TX_{2i}$ and $\mathbf{1}^TY_{1i}=\mathbf{1}^TY_{2i}$ for $i=1,2$;
		\item \label{tempc2}$\begin{bmatrix}
		X_{11} & X_{12}\\
		X_{21} & X_{22}
		\end{bmatrix}\begin{bmatrix}
		\mathbf{1}\\
		-\mathbf{1}
		\end{bmatrix}=\frac{g-h}{2}\begin{bmatrix}
		\mathbf{1}\\
		\mathbf{1}
		\end{bmatrix}$ and $\begin{bmatrix}
		Y_{11} & Y_{12}\\
		Y_{21} & Y_{22}
		\end{bmatrix}\begin{bmatrix}
		\mathbf{1}\\
		-\mathbf{1}
		\end{bmatrix}=\frac{e-f}{2}\begin{bmatrix}
		\mathbf{1}\\
		\mathbf{1}
		\end{bmatrix}$ where $g-h$ and $e-f$ are even.
	\end{enumerate}
\end{theorem}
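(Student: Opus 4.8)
The plan is to recognize the hypotheses of this theorem as a direct instance of Lemma~\ref{Lemma:supporting theorem of rank 21}, after regrouping the four row-bands (of sizes $k,k,l,l$) into a top band $R_1\cup R_2$ and a bottom band $R_3\cup R_4$, and the eight column-bands into the three groups indexed by $\{a,b,c,d\}$, by $\{e,f\}$, and by $\{g,h\}$. Writing $B=A+E$, this identifies
$$A_1=\begin{bmatrix}0&0&J_{k,c}&J_{k,d}\\ J_{k,a}&J_{k,b}&0&0\end{bmatrix},\quad A_2=\begin{bmatrix}0&J_{k,f}\\ J_{k,e}&0\end{bmatrix},\quad X=\begin{bmatrix}X_{11}&X_{12}\\ X_{21}&X_{22}\end{bmatrix},$$
together with the analogous $A_3,Y,A_4$ read off the bottom band. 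My first task is to verify the standing hypotheses of Lemma~\ref{Lemma:supporting theorem of rank 21}: that $(A-B)\mathbf{1}=-E\mathbf{1}=0$, and that $A_i+B_i=J$ and $\mathbf{1}^TA_i=\mathbf{1}^TB_i$ for $i=1,\dots,4$. Each of these is a routine block computation from the explicit forms of $A$ and $E$ (e.g. $A_1+B_1=J$ and $\mathbf{1}^TA_1=\mathbf{1}^TB_1=k\mathbf{1}^T$), so the lemma applies and reduces Gram-mateness to the four identities \eqref{rank2:conditions1}--\eqref{rank2:conditions4}.

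Next I would unwind conditions \eqref{rank2:conditions2} and \eqref{rank2:conditions3}. Since $B_1-A_1$ and $B_3-A_3$ are the $(0,1,-1)$ blocks of $E$, multiplying $(B_1^T-A_1^T)X$ and $(B_3^T-A_3^T)Y$ collapses, via the identity $J_{m,k}X_{ij}=\mathbf{1}_m(\mathbf{1}^TX_{ij})$, to statements purely about the column sums $\mathbf{1}^TX_{ij}$ and $\mathbf{1}^TY_{ij}$. I expect this to produce exactly condition~\ref{tempc1}, namely $\mathbf{1}^TX_{1i}=\mathbf{1}^TX_{2i}$ and $\mathbf{1}^TY_{1i}=\mathbf{1}^TY_{2i}$. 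The same column-sum equalities annihilate both summands of the cross term $(B_2^T-A_2^T)X+Y^T(B_4-A_4)$, so that \eqref{rank2:conditions4} is automatically subsumed by~\ref{tempc1}.

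The crux is the remaining equation~\eqref{rank2:conditions1}, which I would read off one $k\times l$ block at a time. Evaluating the right-hand side $A_1J^T+JA_3^T-JJ^T$ via the row sums of $A_1$ and $A_3$ gives the four blocks $(d-a)J$, $(c-b)J$, $(b-c)J$, $(a-d)J$, while each block of the left-hand side has precisely the shape treated by Lemma~\ref{Lemma:JY+XJ}, e.g. $J_{k,e}Y_{11}^T-J_{k,f}Y_{12}^T+X_{11}J_{g,l}-X_{12}J_{h,l}=(d-a)J$. Applying that lemma block-by-block shows the equation holds iff the row-sum differences $X_{11}\mathbf{1}-X_{12}\mathbf{1}$, $X_{21}\mathbf{1}-X_{22}\mathbf{1}$, $Y_{11}\mathbf{1}-Y_{12}\mathbf{1}$, $Y_{21}\mathbf{1}-Y_{22}\mathbf{1}$ are each constant multiples of $\mathbf{1}$, with scalars $\hat x_1,\hat x_3,\hat y_1,\hat y_2$ obeying the linear system $\hat x_1+\hat y_1=d-a$, $-\hat x_1+\hat y_2=c-b$, $\hat x_3-\hat y_1=b-c$, $\hat x_3+\hat y_2=d-a$. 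The main obstacle is that this $4\times4$ system is singular and so does not by itself determine the scalars. The resolution is to bring in condition~\ref{tempc1}: summing each constant row-sum difference over all rows expresses $k\hat x_1$ and $k\hat x_3$ as differences of the total numbers of ones in $(X_{11},X_{12})$ and in $(X_{21},X_{22})$, and \ref{tempc1} forces these totals to agree, giving $\hat x_1=\hat x_3$, and likewise $\hat y_1=\hat y_2$. Feeding these equalities back into the system and invoking the constraints $e-f=c+d-a-b$ and $g-h=b+d-a-c$ forced by $E\mathbf{1}=0$, the scalars collapse to $\hat x_1=\hat x_3=\tfrac{g-h}{2}$ and $\hat y_1=\hat y_2=\tfrac{e-f}{2}$, which is exactly condition~\ref{tempc2}.

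For the converse I would run this backwards. Condition~\ref{tempc1} immediately yields \eqref{rank2:conditions2}, \eqref{rank2:conditions3}, and \eqref{rank2:conditions4}, while condition~\ref{tempc2} supplies the constant row-sum differences required by the ``if'' direction of Lemma~\ref{Lemma:JY+XJ}; a short check using $e-f=c+d-a-b$ and $g-h=b+d-a-c$ confirms that the resulting block constants are precisely $(d-a)$, $(c-b)$, $(b-c)$, $(a-d)$, so that \eqref{rank2:conditions1} holds. Lemma~\ref{Lemma:supporting theorem of rank 21} then gives that $A$ and $A+E$ are Gram mates, completing the equivalence.
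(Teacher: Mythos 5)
Your proposal is correct and follows essentially the same route as the paper: reduce to Lemma \ref{Lemma:supporting theorem of rank 21}, read \eqref{rank2:conditions2}--\eqref{rank2:conditions4} as the column-sum equalities of \ref{tempc1}, apply Lemma \ref{Lemma:JY+XJ} blockwise to \eqref{rank2:conditions1}, and resolve the resulting underdetermined system by summing against $\mathbf{1}^T$ and invoking $E\mathbf{1}=0$ (the paper's $x_1=x_2$, $y_1=y_2$ step). The only detail you leave implicit is that the scalars are integers (being differences of row sums of $(0,1)$ matrices), which is what forces $g-h$ and $e-f$ to be even.
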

\begin{proof}
	Let $A_1=\begin{bmatrix}
	0 & 0 & J_{k,c} & J_{k,d} \\
	J_{k,a} & J_{k,b} & 0 & 0 
	\end{bmatrix}$, $A_3=\begin{bmatrix}
	0 & J_{l,b} & 0 & J_{l,d} \\
	J_{l,a} & 0 & J_{l,c} & 0
	\end{bmatrix}$, $A_2=\begin{bmatrix}
	0 & J_{k,f}\\
	J_{k,e} & 0
	\end{bmatrix}$ and $A_4=\begin{bmatrix}
	0 & J_{l,h}\\
	J_{l,g} & 0
	\end{bmatrix}$.	Set $X=\begin{bmatrix}
	X_{11} & X_{12}\\
	X_{21} & X_{22}
	\end{bmatrix}$, $Y=\begin{bmatrix}
	Y_{11} & Y_{12}\\
	Y_{21} & Y_{22}
	\end{bmatrix}$ and $B_i=J-A_i$ for $i=1,\dots,4$. 
	
	Applying Lemma \ref{Lemma:supporting theorem of rank 21} to our setup, it is enough to show that our desired conditions \ref{tempc1} and \ref{tempc2} are deduced from the conditions \eqref{rank2:conditions1}--\eqref{rank2:conditions4}. From the condition \eqref{rank2:conditions2}, which is $(B_1^T-A_1^T)X=0$, we have $$\begin{bmatrix}
	J_{k,a} & J_{k,b} & -J_{k,c} & -J_{k,a} \\
	-J_{k,a} & -J_{k,b} & J_{k,c} & J_{k,d} 
	\end{bmatrix}^T\begin{bmatrix}
	X_{11} & X_{12}\\
	X_{21} & X_{22}
	\end{bmatrix}=0.$$ 
	Since $a+b+c+d>0$, there is at least one row in $B_1^T-A_1^T$ that is $\begin{bmatrix}
	\mathbf{1}^T & -\mathbf{1}^T
	\end{bmatrix}$ or $\begin{bmatrix}
	-\mathbf{1}^T & \mathbf{1}^T
	\end{bmatrix}$. Hence, $\mathbf{1}^TX_{1i}=\mathbf{1}^TX_{2i}$ for $i=1,2$. Conversely, $\mathbf{1}^TX_{1i}=\mathbf{1}^TX_{2i}$ for $i=1,2$ implies $(B_1^T-A_1^T)X=0$. Similarly, we can find that the condition \eqref{rank2:conditions3} is equivalent to $\mathbf{1}^TY_{1i}=\mathbf{1}^TY_{2i}$ for $i=1,2$. Since $k,l>0$, each of $B_2^T-A_2^T$ and $B_4^T-A_4^T$ consists of rows that are $\pm\begin{bmatrix}
	\mathbf{1}^T & -\mathbf{1}^T
	\end{bmatrix}$. Thus, $\mathbf{1}^TX_{1i}=\mathbf{1}^TX_{2i}$ and $\mathbf{1}^TY_{1i}=\mathbf{1}^TY_{2i}$ for $i=1,2$ imply \eqref{rank2:conditions4}. 
	
	Consider the condition \eqref{rank2:conditions1}. Then, it can be checked that \eqref{rank2:conditions1} is equivalent to 
	\begin{align*}
	\begingroup 
	\setlength\arraycolsep{1.5pt}
	\begin{bmatrix}
	J_{k,e} & -J_{k,f}\\
	-J_{k,e} & J_{k,f}
	\end{bmatrix}\begin{bmatrix}
	Y_{11}^T & Y_{21}^T\\
	Y_{12}^T & Y_{22}^T
	\end{bmatrix}+\begin{bmatrix}
	X_{11} & X_{12}\\
	X_{21} & X_{22}
	\end{bmatrix}\begin{bmatrix}
	J_{g,l} & -J_{g,l}\\
	-J_{h,l} & J_{h,l}
	\end{bmatrix}=\begin{bmatrix}
	(d-a)J_{k,l} & (c-b)J_{k,l}\\
	(b-c)J_{k,l} & (a-d)J_{k,l}
	\end{bmatrix}.
	\endgroup
	\end{align*}
	It follows from Lemma \ref{Lemma:JY+XJ} that the condition \eqref{rank2:conditions1} is equivalent to $Y_{11}\mathbf{1}_e-Y_{12}\mathbf{1}_f=y_1\mathbf{1}_l$, $X_{11}\mathbf{1}_g-X_{12}\mathbf{1}_h=x_1\mathbf{1}_k$ and $Y_{21}\mathbf{1}_e-Y_{22}\mathbf{1}_f=y_2\mathbf{1}_l$, $X_{21}\mathbf{1}_g-X_{22}\mathbf{1}_h=x_2\mathbf{1}_k$ where $x_1+y_1=x_2+y_2=d-a$ and $y_2-x_1=y_1-x_2=c-b$. Furthermore, pre-multiplying both sides of $Y_{11}\mathbf{1}_e-Y_{12}\mathbf{1}_f=y_1\mathbf{1}_l$ and $Y_{21}\mathbf{1}_e-Y_{22}\mathbf{1}_f=y_2\mathbf{1}_l$ by $\mathbf{1}_l^T$, respectively, we have 
	\begin{align*}
	\mathbf{1}_l^TY_{11}\mathbf{1}_e-\mathbf{1}_l^TY_{12}\mathbf{1}_f=y_1\mathbf{1}_l^T\mathbf{1}_l=y_1l\;\text{and}\;
	\mathbf{1}_l^TY_{21}\mathbf{1}_e-\mathbf{1}_l^TY_{22}\mathbf{1}_f=y_2\mathbf{1}_l^T\mathbf{1}_l=y_2l.
	\end{align*}
	Since $\mathbf{1}_l^TY_{1i}=\mathbf{1}_l^TY_{2i}$ for $i=1,2$, from subtraction of the two equations, we obtain $(y_1-y_2)l=0$ and so $y_1=y_2$. Similarly, using $X_{11}\mathbf{1}_g-X_{12}\mathbf{1}_h=x_1\mathbf{1}_k$ and $X_{21}\mathbf{1}_g-X_{22}\mathbf{1}_h=x_2\mathbf{1}_k$, it can be checked that $x_1=x_2$. It follows from $E\mathbf{1}=0$ that $x_1=\frac{-a+b-c+d}{2}=\frac{g-h}{2}$ and $y_1=\frac{-a-b+c+d}{2}=\frac{e-f}{2}$. Furthermore, since $x_1$ and $y_1$ are integers, $g-h$ and $e-f$ must be even.	
\end{proof}

\begin{corollary}\label{Cor:equivalent condition for realizable 21}
	Let $E$ be a $(0,1,-1)$ matrix of form \ref{index m4}. Then, $E$ is realizable if and only if $g-h$ and $e-f$ are even.
\end{corollary}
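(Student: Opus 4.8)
The plan is to read the corollary directly off Theorem \ref{Thm:Gram mates rank 21}, which already supplies necessary and sufficient conditions \ref{tempc1}--\ref{tempc2} for $A$ and $A+E$ to be Gram mates. The first point to record is that, for a fixed $E$ of form \ref{index m4}, the matrix $A$ displayed in \eqref{matrix A regarding M3} is not a special choice: requiring both $A$ and $A+E$ to be $(0,1)$ matrices forces $A$ to equal $0$ in every position where $E=1$ and to equal $1$ in every position where $E=-1$, while the entries of $A$ sitting over the zero blocks of $E$ remain free, and these free entries are exactly the blocks $X_{ij}$ and $Y_{ij}$. Consequently, $E$ is realizable if and only if the blocks $X_{ij},Y_{ij}$ can be chosen as $(0,1)$ matrices so that conditions \ref{tempc1} and \ref{tempc2} both hold. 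This reformulation is the only structural observation needed; the rest is a matter of deciding when such blocks exist.

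For the forward implication I would note that the vector $\begin{bmatrix} X_{11} & X_{12}\\ X_{21} & X_{22}\end{bmatrix}\begin{bmatrix}\mathbf{1}\\-\mathbf{1}\end{bmatrix}$ has integer entries, being a difference of row sums of $(0,1)$ matrices. Hence condition \ref{tempc2} can hold only if $\frac{g-h}{2}$ is an integer, that is, only if $g-h$ is even; applying the same remark to the $Y$-blocks forces $e-f$ to be even. Thus realizability of $E$ already implies that both $g-h$ and $e-f$ are even, with no computation beyond this integrality observation.

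For the converse, assume $g-h$ and $e-f$ are even, and exhibit an explicit admissible choice of blocks. Setting $X_{11}=X_{21}$ and $X_{12}=X_{22}$ makes \ref{tempc1} automatic for the $X$-blocks, and \ref{tempc2} then reduces to requiring every row of $X_{11}$ to have a common sum $p$ and every row of $X_{12}$ a common sum $q$ with $p-q=\frac{g-h}{2}$. Since $g-h$ is even and $-h\le\frac{g-h}{2}\le g$, one can choose integers $p\in[0,g]$ and $q\in[0,h]$ with $p-q=\frac{g-h}{2}$, and then build $(0,1)$ blocks of the appropriate sizes having those constant row sums; the identical recipe with $e,f$ and $\frac{e-f}{2}$ produces admissible $Y$-blocks. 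The resulting blocks satisfy \ref{tempc1}--\ref{tempc2}, so $A$ and $A+E$ are Gram mates and $E$ is realizable. The only step requiring any care is this feasibility of the constant row sums inside $[0,g]$ and $[0,h]$, which is exactly the place where the evenness of $g-h$ (and of $e-f$) is used; everything else is a direct transcription of Theorem \ref{Thm:Gram mates rank 21}, so I expect no substantial obstacle.
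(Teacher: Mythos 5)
Your proof is correct and follows essentially the same route as the paper: the necessity of the parity conditions is read off from condition (ii) of Theorem \ref{Thm:Gram mates rank 21} (equivalently, from the integrality of the row-sum differences), and sufficiency is shown by exhibiting blocks with constant row sums — your choice of $X_{11}=X_{21}$ with row sum $p$ and $X_{12}=X_{22}$ with row sum $q$, $p-q=\tfrac{g-h}{2}$, matches the paper's explicit choice $X_{11}=X_{21}=\begin{bmatrix}J_{k,\frac{g-h}{2}} & 0\end{bmatrix}$, $X_{12}=X_{22}=0$ (and its mirror when $g-h<0$). Your opening observation that the form \eqref{matrix A regarding M3} is forced on any Gram mate $A$ of $A+E$ is left implicit in the paper but is a worthwhile clarification rather than a deviation.
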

\begin{proof}
	Suppose that $E$ is realizable. By Theorem \ref{Thm:Gram mates rank 21}, the conclusion is straightforward. Conversely, let $g-h$ and $e-f$ be even. We only need to show the existence of  $X=\begin{bmatrix}
	X_{11} & X_{12}\\
	X_{21} & X_{22}
	\end{bmatrix}$ and $Y=\begin{bmatrix}
	Y_{11} & Y_{12}\\
	Y_{21} & Y_{22}
	\end{bmatrix}$ satisfying \ref{tempc1} and \ref{tempc2} of Theorem \ref{Thm:Gram mates rank 21}. If $g-h=0$, then our desired matrix can be obtained by choosing $X=0$. We can also obtain $X$ with \ref{tempc1} and \ref{tempc2} by choosing $X_{11}=X_{21}=\begin{bmatrix}
	J_{k,\frac{g-h}{2}} & 0
	\end{bmatrix}$ and $X_{12}=X_{22}=0$ if $g-h>0$, and choosing $X_{11}=X_{21}=0$ and $X_{12}=X_{22}=\begin{bmatrix}
	J_{k,\frac{h-g}{2}} & 0
	\end{bmatrix}$ if $g-h<0$. In this manner, we can also construct $Y$ satisfying \ref{tempc1} and \ref{tempc2}.
\end{proof}

\begin{remark}\label{Remark:Grammates M1,M2,M3}
	Consider a $(0,1,-1)$ matrix $E$ of form \ref{index m3}. Then, $E\mathbf{1}=0$ implies $c-b=d-a=\frac{e-f}{2}$. The matrix $E$ can be regarded as a matrix of form \ref{index m4} by setting $g=0$ and $h=0$. From Remark \ref{Remark:types M2 and M3}, equivalent conditions for Gram mates via $E$ can be induced by those for Gram mates via matrices of form \ref{index m4}, annihilating the conditions related to $X$ in \ref{tempc1} and \ref{tempc2} of Theorem \ref{Thm:Gram mates rank 21}. Hence, $A$ and $A+E$ are Gram mates where $A$ is in form \eqref{matrix A regarding M3} with $g=h=0$ if and only if $\mathbf{1}^TY_{1i}=\mathbf{1}^TY_{2i}$ for $i=1,2$ and $\begin{bmatrix}
	Y_{11} & Y_{12}\\
	Y_{21} & Y_{22}
	\end{bmatrix}\begin{bmatrix}
	\mathbf{1}\\
	-\mathbf{1}
	\end{bmatrix}=\frac{e-f}{2}\begin{bmatrix}
	\mathbf{1}\\
	\mathbf{1}
	\end{bmatrix}$ where $e-f$ is even. Furthermore, $E$ is realizable if and only if $e-f$ is even.
	
	Let $E$ be a $(0,1,-1)$ matrix of form \ref{index m2}. Then, $E\mathbf{1}=0$ implies $e=f>0$ and $g=h>0$. By an analogous argument with Remark \ref{Remark:types M2 and M3}, we can find that $A$ and $A+E$ are Gram mates where $A$ is in form \eqref{matrix A regarding M3} with $a=b=c=d=0$ if and only if $\begin{bmatrix}
	X_{11} & X_{12}\\
	X_{21} & X_{22}
	\end{bmatrix}\begin{bmatrix}
	\mathbf{1}\\
	-\mathbf{1}
	\end{bmatrix}=0$, $\begin{bmatrix}
	Y_{11} & Y_{12}\\
	Y_{21} & Y_{22}
	\end{bmatrix}\begin{bmatrix}
	\mathbf{1}\\
	-\mathbf{1}
	\end{bmatrix}=0$, $\mathbf{1}^TX_{1i}=\mathbf{1}^TX_{2i}$ and $\mathbf{1}^TY_{1i}=\mathbf{1}^TY_{2i}$ for $i=1,2$. Since $e-f=g-h=0$, $E$ is realizable for any $e,f,g,h>0$.
	
	Finally, from Remark \ref{Remark:Grammates M1}, there is only one pair of Gram mates via $E$ of form \ref{index m1}, and so $E$ is realizable. Thus, one may consider $E$ as a matrix of form \ref{index m4} by setting $e=f=g=h=0$. 
\end{remark}

\begin{remark}\label{Remark: Gram mates from wtE to E}
	Let $E=\begin{bmatrix}
	\wtE & 0\\
	0 & 0
	\end{bmatrix}$ be a realizable matrix where $\wtE$ is of one of types \ref{index m1}--\ref{index m4}, and let $\wtA$ and $\wtE$ be Gram mates. Suppose that $A=\begin{bmatrix}
	\wtA & X_1\\
	X_2 & X_3
	\end{bmatrix}$ is a $(0,1)$ matrix compatible with the partition of $E$. By Proposition \ref{Prop:Grammates for E from tilde of E} and Remark \ref{Remark:left right null vectors for E}, completely determining Gram mates via $E$ is equivalent to finding all left and right $(0,1)$ null vectors of $\wtE$ for $X_1$ and $X_2$. 
\end{remark}

Let $E$ be a matrix of form \ref{index m4}. Suppose that $A$ and $A+E$ are Gram mates where $A$ is in the form \eqref{matrix A regarding M3}. Then,
\begin{align}
2A+E=\begin{bmatrix}\label{temp122}
J_{k,a} & J_{k,b} & J_{k,c} & J_{k,d} & J_{k,e} & J_{k,f} & 2X_{11} & 2X_{12} \\
J_{k,a} & J_{k,b} & J_{k,c} & J_{k,d} & J_{k,e} & J_{k,f} & 2X_{21} & 2X_{22} \\
J_{l,a} & J_{l,b} & J_{l,c} & J_{l,d} & 2Y_{11} & 2Y_{12} & J_{l,g} & J_{l,h} \\
J_{l,a} & J_{l,b} & J_{l,c} & J_{l,d} & 2Y_{21} & 2Y_{22} & J_{l,g} & J_{l,h} \\
\end{bmatrix}
\end{align}
where $e+f$ and $g+h$ are positive even numbers, $k,l>0$ and $a+b+c+d>0$. Consider two vectors $\bx_1^T$ and $\bx_2^T$ that form a basis of $\mathrm{Row}(E)$:
\begin{align}\label{temp123}
\begin{split}
\bx_1^T&=\left[\begin{array}{cccccccc}
\mathbf{1}_a^T & \mathbf{1}_b^T & -\mathbf{1}_c^T & -\mathbf{1}_{d}^T & \mathbf{1}_e^T & -\mathbf{1}_f^T & \mathbf{0}_{g}^T & \mathbf{0}_h^T\end{array}\right],\\
\bx_2^T&=\left[\begin{array}{cccccccc}\mathbf{1}_a^T & -\mathbf{1}_b^T & \mathbf{1}_c^T & -\mathbf{1}_{d}^T & \mathbf{0}_e^T & \mathbf{0}_f^T & \mathbf{1}_{g}^T & -\mathbf{1}_h^T
\end{array}\right].
\end{split}
\end{align}
From $E\mathbf{1}=0$, we have $a+b+e=c+d+f$ and $a+c+g=b+d+h$. By Theorem \ref{Thm:Gram mates rank 21}, $2(X_{i1}\mathbf{1}_g-X_{i2}\mathbf{1}_h)=g-h$ and $2(Y_{i1}\mathbf{1}_e-Y_{i2}\mathbf{1}_f)=e-f$ for $i=1,2$. It follows from a computation that $(2A+E)\bx_j=0$ for $j=1,2$. Since $\mathrm{Row}(E)=\mathrm{span}\{\bx_1,\bx_2\}$, we obtain $(2A+E)E^T=0$. By Theorem \ref{Thm:equivalent conditions}, $A+E$ is obtained from $A$ by changing the signs of $2$ positive singular values. Hence, $A$ is convertible to $A+E$. Furthermore, for $E'$ in each of types \ref{index m1}, \ref{index m2} and \ref{index m3}, we may consider $e=f=g=h=0$, $a=b=c=d=0$ and $g=h=0$, respectively, in \eqref{temp122} and \eqref{temp123}. One can readily check for each case we have $(2A'+E')(E')^T=0$ where $A'$ is a Gram mate to $A'+E'$. Therefore, for any pair of Gram mates $A$ and $A+E$ via $E$ in any form among \ref{index m1}--\ref{index m4}, $A$ and $A+E$ are convertible Gram mates. 

We now consider right singular vectors $\bv_1$ and $\bv_2$ corresponding to the Gram singular values of $A$ and $A+E$. Since $\mathrm{Row}(E)=\mathrm{span}\{\bx_1,\bx_2\}$, we have $\mathrm{span}\{\bx_1,\bx_2\}=\mathrm{span}\{\bv_1,\bv_2\}$. Using $a+b-c-d+e-f=0$, $a-b+c-d+g-h=0$ and the conditions \ref{tempc1} and \ref{tempc2} in Theorem \ref{Thm:Gram mates rank 21}, one can verify from computations that
\begin{align*}
A^TA\bx_1&=\begin{bmatrix}
(k(a+b+e)+\frac{1}{2}l(a-b-c+d))\mathbf{1}_a\\
(k(a+b+e)-\frac{1}{2}l(a-b-c+d))\mathbf{1}_b\\
(-k(a+b+e)+\frac{1}{2}l(a-b-c+d))\mathbf{1}_c\\
(-k(a+b+e)-\frac{1}{2}l(a-b-c+d))\mathbf{1}_d\\
k(a+b+e)\mathbf{1}_e \\
-k(a+b+e)\mathbf{1}_f \\
\frac{1}{2}l(a-b-c+d)\mathbf{1}_g\\
-\frac{1}{2}l(a-b-c+d)\mathbf{1}_h
\end{bmatrix},\\
A^TA\bx_2&=\begin{bmatrix}
(\frac{1}{2}k(a-b-c+d)+l(a+c+g))\mathbf{1}_a\\
(\frac{1}{2}k(a-b-c+d)-l(a+c+g))\mathbf{1}_b\\
(-\frac{1}{2}k(a-b-c+d)+l(a+c+g))\mathbf{1}_c\\
(-\frac{1}{2}k(a-b-c+d)-l(a+c+g))\mathbf{1}_d\\
\frac{1}{2}k(a-b-c+d)\mathbf{1}_e\\
-\frac{1}{2}k(a-b-c+d)\mathbf{1}_f\\
l(a+c+g)\mathbf{1}_g\\
-l(a+c+g)\mathbf{1}_h
\end{bmatrix}.
\end{align*}
Consider an equation $A^TA(\zeta_1\bx_1+\zeta_2\bx_2)=\lambda(\zeta_1\bx_1+\zeta_2\bx_2)$ where $\zeta_1$, $\zeta_2$ and $\lambda$ are real numbers. Set $a_1=k(a+b+e)$, $a_2=k(a-b-c+d)$, $a_3=l(a+c+g)$ and $a_4=l(a-b-c+d)$. Then,
\begin{align*}
A^TA(\zeta_1\bx_1+\zeta_2\bx_2)=\begin{bmatrix}
(\zeta_1(a_1+\frac{1}{2}a_4)+\zeta_2(\frac{1}{2}a_2+a_3))\mathbf{1}_a\\
(\zeta_1(a_1-\frac{1}{2}a_4)+\zeta_2(\frac{1}{2}a_2-a_3))\mathbf{1}_b\\
(\zeta_1(-a_1+\frac{1}{2}a_4)+\zeta_2(-\frac{1}{2}a_2+a_3))\mathbf{1}_b\\
(\zeta_1(-a_1-\frac{1}{2}a_4)+\zeta_2(-\frac{1}{2}a_2-a_3))\mathbf{1}_d\\
(\zeta_1a_1+\frac{1}{2}\zeta_2a_2)\mathbf{1}_e\\
(-\zeta_1a_1-\frac{1}{2}\zeta_2a_2)\mathbf{1}_f\\
(\frac{1}{2}\zeta_1a_4+\zeta_2a_3)\mathbf{1}_g\\
(-\frac{1}{2}\zeta_1a_4-\zeta_2a_3)\mathbf{1}_h
\end{bmatrix}=\lambda\begin{bmatrix}
(\zeta_1+\zeta_2)\mathbf{1}_a\\
(\zeta_1-\zeta_2)\mathbf{1}_b\\
(-\zeta_1+\zeta_2)\mathbf{1}_b\\
(-\zeta_1-\zeta_2)\mathbf{1}_d\\
\zeta_1\mathbf{1}_e\\
-\zeta_1\mathbf{1}_f\\
\zeta_2\mathbf{1}_g\\
-\zeta_2\mathbf{1}_h
\end{bmatrix}.
\end{align*}
This implies that $\zeta_1\bx_1+\zeta_2\bx_2$ is an eigenvector of $A^TA$ corresponding to an eigenvalue $\lambda$ if and only if $(\zeta_1,\zeta_2)$ is a solution to the system of equations $a_1\zeta_1+\frac{1}{2}a_2\zeta_2=\lambda\zeta_1$ and $\frac{1}{2}a_4\zeta_1+a_3\zeta_2=\lambda\zeta_2$. Hence, for an eigenvector $(\zeta_1,\zeta_2)$ of the matrix $\begin{bmatrix}
a_1 & \frac{1}{2}a_2\\
\frac{1}{2}a_4 & a_3
\end{bmatrix}$ associated to an eigenvalue $\lambda$, a normalized vector of $\zeta_1\bx_1+\zeta_2\bx_2$ is a right singular vector corresponding to a Gram singular value $\sqrt{\lambda}$ of $A$. Furthermore, for $E'$ in each of types \ref{index m1}, \ref{index m2} and \ref{index m3}, considering the extra conditions $e=f=g=h=0$, $a=b=c=d=0$ and $g=h=0$, respectively, one can find that analogous results are established for right singular vectors associated to the Gram singular values of Gram mates via $E'$. Therefore, we have the following result.

\begin{theorem}\label{Thm:Gramsingular rank21}
	Let $E$ be a realizable matrix of rank $2$ corresponding to one of forms \ref{index m1}--\ref{index m4}. Let 
	$$M=\begin{bmatrix}
	k(a+b+e) & \frac{1}{2}k(a-b-c+d)\\
	\frac{1}{2}l(a-b-c+d) & l(a+c+g)
	\end{bmatrix}$$
	where the entries in $M$ correspond to the sub-indices in \ref{index m1}--\ref{index m4}. Suppose that $A$ and $A+E$ are Gram mates via $E$. Then, $A$ and $A+E$ are convertible, and their Gram singular values are the square roots of the eigenvalues $\lambda$ of $M$. Furthermore, a right singular vector corresponding to $\sqrt{\lambda}$ is a normalized vector of $\zeta_1\bx_1+\zeta_2\bx_2$, where $\bx_1$ and $\bx_2$ are the vectors in \eqref{temp123} and $(\zeta_1,\zeta_2)$ is an eigenvector of $M$ associated to $\lambda$.
\end{theorem}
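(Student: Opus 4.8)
The plan is to distill the conclusion from the computations preceding the statement into two clean steps: first establish that $A$ and $A+E$ are convertible, and then diagonalize $A^TA$ on the two-dimensional subspace $\mathrm{Row}(E)=\mathrm{span}\{\bx_1,\bx_2\}$, where $\bx_1,\bx_2$ are the vectors in \eqref{temp123}.

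For convertibility I would work with form \ref{index m4} first. Writing out $2A+E$ as in \eqref{temp122} and using conditions \ref{tempc1} and \ref{tempc2} of Theorem \ref{Thm:Gram mates rank 21} together with the relations $a+b+e=c+d+f$ and $a+c+g=b+d+h$ coming from $E\mathbf{1}=0$, I would verify by direct computation that $(2A+E)\bx_1=(2A+E)\bx_2=0$. Since $\mathrm{Row}(E)=\mathrm{span}\{\bx_1,\bx_2\}$, this yields $(2A+E)E^T=0$, so Theorem \ref{Thm:equivalent conditions} applies and $A+E$ is obtained from $A$ by changing the signs of $\mathrm{rank}(E)=2$ positive singular values; that is, $A$ and $A+E$ are convertible. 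The forms \ref{index m1}--\ref{index m3} are then recovered by specializing the indices (namely $e=f=g=h=0$, $a=b=c=d=0$, and $g=h=0$, respectively) in \eqref{temp122} and \eqref{temp123}, where the same annihilation holds.

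To pin down the Gram singular values, I would use Lemma \ref{Lemma:rowspace(E) and span of right singular vectors} (equivalently condition \ref{c3} of Theorem \ref{Thm:equivalent conditions}): the right singular vectors attached to the two Gram singular values form a basis of $\mathrm{Row}(E)=\mathrm{span}\{\bx_1,\bx_2\}$, and being right singular vectors of $A$ for positive singular values $\sigma$, they are eigenvectors of $A^TA$ with eigenvalue $\sigma^2$. Hence it suffices to diagonalize the restriction of $A^TA$ to $\mathrm{span}\{\bx_1,\bx_2\}$. Computing $A^TA\bx_1$ and $A^TA\bx_2$ explicitly (again invoking \ref{tempc1}, \ref{tempc2}, and $E\mathbf{1}=0$ to control the free blocks $X_{ij},Y_{ij}$ of $A$), one finds that $A^TA\bx_1=a_1\bx_1+\tfrac{1}{2}a_4\bx_2$ and $A^TA\bx_2=\tfrac{1}{2}a_2\bx_1+a_3\bx_2$, with $a_1=k(a+b+e)$, $a_2=k(a-b-c+d)$, $a_3=l(a+c+g)$, $a_4=l(a-b-c+d)$. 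Thus $\mathrm{span}\{\bx_1,\bx_2\}$ is $A^TA$-invariant and the matrix of the restriction in the basis $\{\bx_1,\bx_2\}$ is exactly $M$. Consequently $\zeta_1\bx_1+\zeta_2\bx_2$ is an eigenvector of $A^TA$ for $\lambda$ precisely when $(\zeta_1,\zeta_2)^T$ is an eigenvector of $M$ for $\lambda$, so the squared Gram singular values are the eigenvalues of $M$ and the normalization of $\zeta_1\bx_1+\zeta_2\bx_2$ is the corresponding right singular vector.

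The main obstacle is the bookkeeping in the two products $A^TA\bx_1$ and $A^TA\bx_2$: although $A$ contains the free $(0,1)$ blocks $X_{ij},Y_{ij}$, one must check that conditions \ref{tempc1}--\ref{tempc2} force every block-coordinate of $A^TA\bx_j$ to collapse into a combination of $\bx_1,\bx_2$, so that invariance genuinely holds and the reduced matrix is $M$ rather than some perturbation of it. Once the four scalars $a_1,\dots,a_4$ are identified, the remaining eigenvalue/eigenvector bookkeeping is routine $2\times 2$ linear algebra, and the passage to forms \ref{index m1}--\ref{index m3} is the same specialization used in the convertibility step.
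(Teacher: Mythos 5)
Your proposal is correct and follows essentially the same route as the paper: the paper likewise verifies $(2A+E)\bx_j=0$ for $j=1,2$ to get $(2A+E)E^T=0$ and invoke Theorem \ref{Thm:equivalent conditions}, then computes $A^TA\bx_1$ and $A^TA\bx_2$ blockwise to show the restriction of $A^TA$ to $\mathrm{span}\{\bx_1,\bx_2\}$ is represented by $M$, with the same specialization of indices for forms \ref{index m1}--\ref{index m3}. The only difference is presentational: you package the computation as an invariant-subspace/change-of-basis argument, which is exactly what the paper's coordinate-by-coordinate verification amounts to.
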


\subsubsection{Realizable matrices in the form \ref{index m5}}
We now investigate Gram mates via matrices of the form \ref{index m5}. Furthermore, we shall show that there exist non-convertible Gram mates via matrices of the form \ref{index m5}, while any Gram mates via matrices in any form among \ref{index m1}--\ref{index m4} have Gram singular values.

\begin{lemma}\label{Lemma:supporting Grammates rank 22}
	Let $$A=\begin{bmatrix}
	A_1 & A_2 & X\\
	A_3 & Y & A_4\\
	Z & A_5 & A_6
	\end{bmatrix},\; B=\begin{bmatrix}
	B_1 & B_2 & X\\
	B_3 & Y & B_4\\
	Z & B_5 & B_6
	\end{bmatrix}$$ be $(0,1)$ matrices such that $(A-B)\mathbf{1}=0$, $\mathbf{1}^T(A-B)=0^T$, and $A_i+B_i=J$ for $i=1,\dots 6$. Then, $AA^T=BB^T$  if and only if 
	\begin{align*}
	&(B_2-A_2)Y^T+X(B_4^T-A_4^T)=A_1A_3^T-B_1B_3^T,\\
	&(B_1-A_1)Z^T+X(B_6^T-A_6^T)=A_2A_5^T-B_2B_5^T,\\
	&(B_3-A_3)Z^T+Y(B_5^T-A_5^T)=A_4A_6^T-B_4B_6^T.
	\end{align*}
	Furthermore, $A^TA=B^TB$ if and only if
	\begin{align*}
	&(B_2^T-A_2^T)X+Y^T(B_4-A_4)=A_5^TA_6-B_5^TB_6,\\
	&(B_1^T-A_1^T)X+Z^T(B_6-A_6)=A_3^TA_4-B_3^TB_4,\\
	&(B_3^T-A_3^T)Y+Z^T(B_5-A_5)=A_1^TA_2-B_1^TB_2.
	\end{align*}
\end{lemma}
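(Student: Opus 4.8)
The plan is to establish both biconditionals by computing $AA^T$ and $BB^T$ (respectively $A^TA$ and $B^TB$) as $3\times 3$ block matrices conformal with the displayed partitions of $A$ and $B$, and then comparing the two products one block at a time. This is the three-block-row analogue of the argument behind Lemma~\ref{Lemma:supporting theorem of rank 21}. The decisive structural feature I would exploit is that the blocks $X$, $Y$, $Z$ are common to $A$ and $B$: in each diagonal block of $AA^T$ exactly one of $XX^T$, $YY^T$, $ZZ^T$ appears and cancels against its counterpart in $BB^T$, whereas each off-diagonal block of the product combines one common block with two of the differing blocks $A_i$, $B_i$. Consequently the entire comparison is driven by the off-diagonal blocks alone.

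First I would dispose of the three diagonal blocks, showing that they coincide automatically from the standing hypotheses and hence contribute nothing to the comparison. After the common term cancels, equality of the $(1,1)$ blocks reduces to $A_1A_1^T+A_2A_2^T=B_1B_1^T+B_2B_2^T$, which is exactly the conclusion of Lemma~\ref{Lemma:Grammates A+B=J} applied to $\begin{bmatrix} A_1 & A_2 \end{bmatrix}$ and $\begin{bmatrix} B_1 & B_2 \end{bmatrix}$. To invoke that lemma I must verify its hypotheses: from $A_i+B_i=J$ I get $\begin{bmatrix} A_1 & A_2 \end{bmatrix}+\begin{bmatrix} B_1 & B_2 \end{bmatrix}=J$, and from the first block-row of $(A-B)\mathbf 1=0$ (with the shared $X$ cancelling) I get $A_1\mathbf 1+A_2\mathbf 1=B_1\mathbf 1+B_2\mathbf 1$; combined with $B_i=J-A_i$ this forces $2(A_1\mathbf 1+A_2\mathbf 1)=n\mathbf 1$, so $n$ is even and the common row sum equals $\tfrac n2\mathbf 1$, which is precisely the hypothesis of Lemma~\ref{Lemma:Grammates A+B=J}. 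The same reasoning applied to $\begin{bmatrix} A_3 & A_4 \end{bmatrix}$ and $\begin{bmatrix} A_5 & A_6 \end{bmatrix}$, using the second and third block-rows of $(A-B)\mathbf 1=0$, settles the $(2,2)$ and $(3,3)$ blocks.

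It then remains to read off the three off-diagonal blocks. Equating the $(1,2)$, $(1,3)$ and $(2,3)$ blocks of $AA^T$ and $BB^T$ and rearranging gives, with no further substitution, exactly the three displayed equations for $AA^T=BB^T$ (the relations $B_i=J-A_i$ are unnecessary here, since each equation already carries both $A_i$ and $B_i$). Conversely, granting the three equations makes all off-diagonal blocks agree, and the diagonal blocks agree by the previous step, so $AA^T=BB^T$. The statement for $A^TA=B^TB$ is entirely parallel: one partitions $A^TA$ and $B^TB$ by the three block-columns, observes that $X$, $Y$, $Z$ now populate the diagonal blocks and cancel, uses the column-sum condition $\mathbf 1^T(A-B)=0^T$ together with $A_i+B_i=J$ to make the diagonal blocks coincide via the transpose form of Lemma~\ref{Lemma:Grammates A+B=J}, and extracts the three off-diagonal equations. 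The only point demanding care—the main, if modest, obstacle—is the bookkeeping that guarantees the diagonal blocks drop out automatically, since the whole equivalence rests on the comparison reducing to the off-diagonal blocks; everything else is elementary block multiplication.
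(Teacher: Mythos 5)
Your proposal is correct and follows essentially the same route as the paper: block-multiply $AA^T$ and $BB^T$ (resp.\ $A^TA$ and $B^TB$), cancel the common $XX^T$, $YY^T$, $ZZ^T$ terms, dispose of the diagonal blocks by applying Lemma~\ref{Lemma:Grammates A+B=J} to the pairs $\bigl(\begin{bmatrix}A_1 & A_2\end{bmatrix},\begin{bmatrix}B_1 & B_2\end{bmatrix}\bigr)$, etc., and read the three displayed identities off the off-diagonal blocks. Your explicit verification of the hypotheses of Lemma~\ref{Lemma:Grammates A+B=J} (evenness of the block width and the half-$J$ row sums) is a detail the paper leaves implicit, but the argument is the same.
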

\begin{proof}
	From $(A-B)\mathbf{1}=0$, we have $\begin{bmatrix}
	A_1 & A_2
	\end{bmatrix}\mathbf{1}=\begin{bmatrix}
	B_1 & B_2
	\end{bmatrix}\mathbf{1}$, $\begin{bmatrix}
	A_3 & A_4
	\end{bmatrix}\mathbf{1}=\begin{bmatrix}
	B_3 & B_4
	\end{bmatrix}\mathbf{1}$, and $\begin{bmatrix}
	A_5 & A_6
	\end{bmatrix}\mathbf{1}=\begin{bmatrix}
	B_5 & B_6
	\end{bmatrix}\mathbf{1}$. Moreover, $A_i+B_i=J$ for $i=1,\dots 6$. By Lemma \ref{Lemma:Grammates A+B=J}, $A_1A_1^T+A_2A_2^T=B_1B_1^T+B_2B_2^T$, $A_3A_3^T+A_4A_4^T=B_3B_3^T+B_4B_4^T$ and $A_5A_5^T+A_6A_6^T=B_5B_5^T+B_6B_6^T$. Similarly, one can deduce from $\mathbf{1}^T(A-B)=0^T$ that $A_1^TA_1+A_3^TA_3=B_1^TB_1+B_3^TB_3$, $A_2^TA_2+A_5^TA_5T=B_2^TB_2+B_5^TB_5$ and $A_4^TA_4+A_6^TA_6=B_4^TB_4+B_6^TB_6$. Considering $AA^T=BB^T$ and $A^TA=B^TB$, the desired conclusion is straightforward.
\end{proof}
\begin{remark}\label{Remark:sizes for M5}
	Continuing with the notation and hypothesis of Lemma \ref{Lemma:supporting Grammates rank 22}, consider $A$ that is a $3\times 3$ block partitioned matrix. Let $n_i$ be the number of columns in the $i^\text{th}$ column partition of $A$ for $i=1,2,3$. Then, $(A-B)\mathbf{1}=0$ and $A_i+B_i=J$ imply that $n_1+n_2$, $n_1+n_3$ and $n_2+n_3$ are even. Hence, $n_i$ is either even for $i=1,2,3$ or odd for $i=1,2,3$. Similarly, the number of rows in each row partition of $A$ has the same parity.
\end{remark}

\begin{theorem}\label{Thm:Grammates Rank22}
	Let $$E=\begin{bmatrix}
	J_{k,a} & -J_{k,b}  & J_{k,c} & -J_{k,d} & 0 & 0 \\
	-J_{l,a} & J_{l,b} & -J_{l,c} & J_{l,d} & 0 & 0 \\
	J_{p,a} & -J_{p,b}  & 0 & 0 & J_{p,e} & -J_{p,f} \\
	-J_{q,a} & J_{q,b} & 0 & 0 & -J_{q,e} & J_{q,f} \\
	0 & 0 & J_{r,c} & -J_{r,d} & -J_{r,e} & J_{r,f} \\
	0 & 0 & -J_{s,c} & J_{s,d} & J_{s,e} & -J_{s,f} \\
	\end{bmatrix}$$ where $E\mathbf{1}=0$, $\mathbf{1}^TE=0^T$, and $a+b$, $c+d$, $e+f$, $k+l$, $p+q$ and $r+s$ are positive. Let
	\begin{align}\label{matrix A rank 22}
	A=\begin{bmatrix}
	0 & J_{k,b}  & 0 & J_{k,d} & X_{11} & X_{12} \\
	J_{l,a} & 0 & J_{l,c} & 0 & X_{21} & X_{22} \\
	0 & J_{p,b}  & Y_{11} & Y_{12} & 0 & J_{p,f} \\
	J_{q,a} & 0 & Y_{21} & Y_{22} & J_{q,e} & 0 \\
	Z_{11} & Z_{12} & 0 & J_{r,d} & J_{r,e} & 0  \\
	Z_{21} & Z_{22} & J_{s,c} & 0 & 0 & J_{s,f} \\
	\end{bmatrix}
	\end{align}
	be a $(0,1)$ matrix conformally partitioned with $E$. Suppose that $X=\begin{bmatrix}
	X_{11} & X_{12}\\
	X_{21} & X_{22}
	\end{bmatrix}$, $Y=\begin{bmatrix}
	Y_{11} & Y_{12}\\
	Y_{21} & Y_{22}
	\end{bmatrix}$ and $Z=\begin{bmatrix}
	Z_{11} & Z_{12}\\
	Z_{21} & Z_{22}
	\end{bmatrix}$. Then, $A$ and $A+E$ are Gram mates if and only if $X$, $Y$ and $Z$ satisfy the following conditions: 
	\begin{enumerate}[label=(\roman*)]
		\item\label{rank22 c1} there are integers $x_1,x_2,y_1,y_2,z_1$ and $z_2$ such that $X\begin{bmatrix}
		\mathbf{1}\\
		-\mathbf{1}
		\end{bmatrix}=\begin{bmatrix}
		x_1\mathbf{1}\\
		x_2\mathbf{1}
		\end{bmatrix}$, $Y\begin{bmatrix}
		\mathbf{1}\\
		-\mathbf{1}
		\end{bmatrix}=\begin{bmatrix}
		y_1\mathbf{1}\\
		y_2\mathbf{1}
		\end{bmatrix}$, $Z\begin{bmatrix}
		\mathbf{1}\\
		-\mathbf{1}
		\end{bmatrix}=\begin{bmatrix}
		z_1\mathbf{1}\\
		z_2\mathbf{1}
		\end{bmatrix}$ and $x_1=y_2=-z_2$, $x_2=y_1=-z_1$, $x_1+x_2=y_1+y_2=-(z_1+z_2)=e-f$; and
		\item\label{rank22 c2} there are integers $\alpha_1,\alpha_2,\beta_1,\beta_2,\gamma_1$ and $\gamma_2$ such that $X^T\begin{bmatrix}
		\mathbf{1}\\
		-\mathbf{1}
		\end{bmatrix}=\begin{bmatrix}
		\alpha_1\mathbf{1}\\
		\alpha_2\mathbf{1}
		\end{bmatrix}$, $Y^T\begin{bmatrix}
		\mathbf{1}\\
		-\mathbf{1}
		\end{bmatrix}=\begin{bmatrix}
		\beta_1\mathbf{1}\\
		\beta_2\mathbf{1}
		\end{bmatrix}$, $Z^T\begin{bmatrix}
		\mathbf{1}\\
		-\mathbf{1}
		\end{bmatrix}=\begin{bmatrix}
		\gamma_1\mathbf{1}\\
		\gamma_2\mathbf{1}
		\end{bmatrix}$ and $\gamma_1=\beta_2=-\alpha_2$, $\gamma_2=\beta_1=-\alpha_1$, $\gamma_1+\gamma_2=\beta_1+\beta_2=-(\alpha_1+\alpha_2)=l-k$.
	\end{enumerate}
	In particular, \ref{rank22 c1} and \ref{rank22 c2} imply \begin{enumerate*}[label=(\alph*)]
		\item\label{rank22 c3} $e\alpha_1-f\alpha_2=kx_1-lx_2$, $c\beta_1-d\beta_2=py_1-qy_2$, and $a\gamma_1-b\gamma_2=rz_1-sz_2$.
	\end{enumerate*}
\end{theorem}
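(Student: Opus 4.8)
The plan is to reduce everything to Lemma \ref{Lemma:supporting Grammates rank 22} by viewing both $E$ and $A$ as $3\times 3$ block matrices. First I would coarsen the six row indices into the groups $(k,l),(p,q),(r,s)$ and the six column indices into $(a,b),(c,d),(e,f)$. Under this coarsening $E$ becomes a $3\times3$ block matrix whose $(1,3),(2,2),(3,1)$ blocks vanish, while $A$ takes exactly the shape $\begin{bmatrix} A_1 & A_2 & X\\ A_3 & Y & A_4\\ Z & A_5 & A_6\end{bmatrix}$ demanded by the lemma, with $X,Y,Z$ the three unknown blocks sitting where $E$ is zero. A one-line check gives $A_i+B_i=J$ for each $i$ (here $B=A+E$, and each $A_i$ is a block-(anti)diagonal array of $J$-blocks), so the lemma applies and turns ``$A,A+E$ are Gram mates'' into its six displayed matrix equations, three from $AA^T=BB^T$ and three from $A^TA=B^TB$.

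The key structural fact is that every block $B_i-A_i$ is one of the rank-one matrices $\pm\mathbf u\mathbf w^T$, where $\mathbf u$ ranges over $\begin{bmatrix}\mathbf 1_k\\-\mathbf 1_l\end{bmatrix},\begin{bmatrix}\mathbf 1_p\\-\mathbf 1_q\end{bmatrix},\begin{bmatrix}\mathbf 1_r\\-\mathbf 1_s\end{bmatrix}$ and $\mathbf w$ over $\begin{bmatrix}\mathbf 1_a\\-\mathbf 1_b\end{bmatrix},\begin{bmatrix}\mathbf 1_c\\-\mathbf 1_d\end{bmatrix},\begin{bmatrix}\mathbf 1_e\\-\mathbf 1_f\end{bmatrix}$. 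Substituting these outer products, each $AA^T$-equation takes the form $\mathbf u\,\mathbf a^T+\mathbf b\,\mathbf u'^T=M$, where the two factors $\mathbf a,\mathbf b$ are precisely the vectors $X\begin{bmatrix}\mathbf 1\\-\mathbf 1\end{bmatrix}$, $Y\begin{bmatrix}\mathbf 1\\-\mathbf 1\end{bmatrix}$, $Z\begin{bmatrix}\mathbf 1\\-\mathbf 1\end{bmatrix}$ of \ref{rank22 c1}, and $M$ is a product of the $J$-blocks of the $A_i,B_i$, which I would evaluate explicitly to a scalar multiple of $\mathrm{diag}(J,-J)$ (or its anti-diagonal analogue). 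Comparing such an identity block by block and applying the rank-one instance of Lemma \ref{Lemma:JY+XJ} (namely $\mathbf 1_m\mathbf a^T+\mathbf b\mathbf 1_n^T=\gamma J$ forces $\mathbf a,\mathbf b$ constant with their constants summing to $\gamma$) converts each equation into the statement that the relevant vectors are blockwise constant, together with the scalar relations $x_1=y_2=-z_2$ and $x_2=y_1=-z_1$. The common value of the block sums is pinned down by $E\mathbf 1=0$, which yields $b-a=c-d=e-f$, giving exactly \ref{rank22 c1}; both directions of the equivalence are covered, since substituting the blockwise-constant forms back verifies each equation. The three $A^TA$-equations are treated identically---equivalently, by applying the same argument to the transposed pair $(A^T,A^T+E^T)$, which is again of form \ref{index m5}---now using $\mathbf 1^TE=0$ to fix the common value $l-k$ and obtain \ref{rank22 c2}.

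For the ``in particular'' clause I would evaluate a single scalar two ways. Since $\begin{bmatrix}\mathbf 1_k^T & -\mathbf 1_l^T\end{bmatrix} X \begin{bmatrix}\mathbf 1_e\\-\mathbf 1_f\end{bmatrix}$ equals $kx_1-lx_2$ by \ref{rank22 c1} and equals $e\alpha_1-f\alpha_2$ by \ref{rank22 c2}, the first identity of \ref{rank22 c3} follows; the analogous computations with $Y$ (against $\mathbf u_2$ and $\mathbf w_2$) and $Z$ (against $\mathbf u_3$ and $\mathbf w_1$) give the remaining two.

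I expect the main obstacle to be bookkeeping rather than conceptual. Keeping the six equations, their signs, and the orientation of every outer product straight is delicate, but the genuinely subtle point is the degenerate indices: only the pairwise sums $a+b,\dots,r+s$ are assumed positive, so individual indices may vanish. When they do, some of the off-diagonal blocks I rely on to force blockwise constancy collapse, and I would need to check case by case that the corresponding condition in \ref{rank22 c1}--\ref{rank22 c2} becomes vacuous in a compatible way (or recover it from a complementary nonzero block), so that the stated equivalence persists in all admissible size patterns.
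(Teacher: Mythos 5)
Your proposal follows essentially the same route as the paper's proof: coarsen to the $3\times 3$ block partition so that Lemma \ref{Lemma:supporting Grammates rank 22} applies, reduce each of the six resulting block equations via Lemma \ref{Lemma:JY+XJ} to blockwise constancy of $X\begin{bmatrix}\mathbf 1\\-\mathbf 1\end{bmatrix}$, $Y\begin{bmatrix}\mathbf 1\\-\mathbf 1\end{bmatrix}$, $Z\begin{bmatrix}\mathbf 1\\-\mathbf 1\end{bmatrix}$ and their transposed analogues, pin the constants using $E\mathbf 1=0$ and $\mathbf 1^TE=0^T$, and obtain \ref{rank22 c3} by evaluating the bilinear forms $\begin{bmatrix}\mathbf 1^T & -\mathbf 1^T\end{bmatrix}X\begin{bmatrix}\mathbf 1\\-\mathbf 1\end{bmatrix}$ (and its $Y$, $Z$ counterparts) in two ways, exactly as the paper does by subtracting the corresponding scalar identities. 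The degenerate-index caveat you raise is legitimate, but the paper likewise defers it to the remark following the theorem rather than treating it inside the proof.
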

\begin{proof}	
	Let $A=\begin{bmatrix}
	A_1 & A_2 & X\\
	A_3 & Y & A_4\\
	Z & A_5 & A_6
	\end{bmatrix}$ and $E=\begin{bmatrix}
	E_1 & E_2 & 0\\
	E_3 & 0 & E_4\\
	0 & E_5 & E_6
	\end{bmatrix}$. Set $B=A+E$. By Lemma \ref{Lemma:supporting Grammates rank 22}, $AA^T=BB^T$ if and only if
	\begin{align}\label{temp:eqn1}
	&\begin{bmatrix}
	J_{k,c} & -J_{k,d} \\
	-J_{l,c} & J_{l,d}
	\end{bmatrix}Y^T+X\begin{bmatrix}
	J_{p,e} & -J_{p,f} \\
	-J_{q,e} & J_{q,f}
	\end{bmatrix}^T=\begin{bmatrix}
	(b-a)J_{k,p} & 0 \\
	0 & (a-b)J_{l,q}
	\end{bmatrix},\\\label{temp:eqn2}
	&\begin{bmatrix}
	J_{k,a} & -J_{k,b} \\
	-J_{l,a} & J_{l,b}
	\end{bmatrix}Z^T+X\begin{bmatrix}
	-J_{r,e} & J_{r,f} \\
	J_{s,e} & -J_{s,f}
	\end{bmatrix}^T=\begin{bmatrix}
	(d-c)J_{k,r} & 0 \\
	0 & (c-d)J_{l,s}
	\end{bmatrix},\\\label{temp:eqn3}
	&\begin{bmatrix}
	J_{p,a} & -J_{p,b} \\
	-J_{q,a} & J_{q,b}
	\end{bmatrix}Z^T+Y\begin{bmatrix}
	J_{r,c} & -J_{r,d} \\
	-J_{s,c} & J_{s,d}
	\end{bmatrix}^T=\begin{bmatrix}
	0 & (f-e)J_{p,s} \\
	(e-f)J_{q,r} & 0
	\end{bmatrix}.
	\end{align}
	From \eqref{temp:eqn1}, we have 
	\begin{align*}
	&J_{k,c}Y_{11}^T-J_{k,d}Y_{12}^T+X_{11}J_{e,p}-X_{12}J_{f,p}=(b-a)J_{k,p},\\ 
	&-J_{l,c}Y_{11}^T+J_{l,d}Y_{12}^T+X_{21}J_{e,p}-X_{22}J_{f,p}=0,\\ 
	&J_{k,c}Y_{21}^T-J_{k,d}Y_{22}^T-X_{11}J_{e,q}+X_{12}J_{f,q}=0,\\
	&-J_{l,c}Y_{21}^T+J_{l,d}Y_{22}^T-X_{21}J_{e,q}+X_{22}J_{f,q}=(a-b)J_{l,q}.
	\end{align*}
	By Lemma \ref{Lemma:JY+XJ}, \eqref{temp:eqn1} is equivalent to the conditions that $X_{11}\mathbf{1}_e-X_{12}\mathbf{1}_f=x_1\mathbf{1}_k$, $X_{21}\mathbf{1}_e-X_{22}\mathbf{1}_f=x_2\mathbf{1}_l$, $Y_{11}\mathbf{1}_c-Y_{12}\mathbf{1}_d=y_1\mathbf{1}_p$ and $Y_{21}\mathbf{1}_c-Y_{22}\mathbf{1}_d=y_2\mathbf{1}_q$ where $x_1+y_1=x_2+y_2=b-a$ and $x_1-y_2=x_2-y_1=0$. Similarly, applying Lemma \ref{Lemma:JY+XJ} to \eqref{temp:eqn2} and \eqref{temp:eqn3}, we find that \eqref{temp:eqn2} is equivalent to $z_1-x_1=z_2-x_2=d-c$ and $z_2+x_1=z_1+x_2=0$ where $Z_{11}\mathbf{1}_a-Z_{12}\mathbf{1}_b=z_1\mathbf{1}_r$ and $Z_{21}\mathbf{1}_a-Z_{22}\mathbf{1}_b=z_2\mathbf{1}_s$; \eqref{temp:eqn3} is equivalent to $z_2-y_1=z_1-y_2=f-e$ and $z_1+y_1=z_2+y_2=0$. Note that $x_1-y_2=x_2-y_1=z_2+x_1=z_1+x_2=z_1+y_1=z_2+y_2=0$ if and only if $x_1=y_2=-z_2$ and $x_2=y_1=-z_1$. Since $E\mathbf{1}=0$ and $\mathbf{1}^TE=0^T$, we have $a-b=d-c=f-e$. It follows that $x_1+x_2=y_1+y_2=-(z_1+z_2)=e-f$.
	
	Applying an analogous argument to the equivalent conditions for $A^TA=B^TB$ from Lemma \ref{Lemma:supporting Grammates rank 22}, it can be found that $\gamma_1=\beta_2=-\alpha_2$, $\gamma_2=\beta_1=-\alpha_1$, $\gamma_1+\gamma_2=\beta_1+\beta_2=-(\alpha_1+\alpha_2)=l-k$ where $X_{11}^T\mathbf{1}_k-X_{21}^T\mathbf{1}_l=\alpha_1\mathbf{1}_e$, $X_{12}^T\mathbf{1}_k-X_{22}^T\mathbf{1}_l=\alpha_2\mathbf{1}_f$, $Y_{11}^T\mathbf{1}_p-Y_{21}^T\mathbf{1}_q=\beta_1\mathbf{1}_c$,  $Y_{12}^T\mathbf{1}_p-Y_{22}^T\mathbf{1}_q=\beta_2\mathbf{1}_d$,  $Z_{11}^T\mathbf{1}_r-Z_{21}^T\mathbf{1}_s=\gamma_1\mathbf{1}_a$, and $Z_{12}^T\mathbf{1}_r-Z_{22}^T\mathbf{1}_s=\gamma_2\mathbf{1}_b$.
	
	On the other hand, subtracting $\mathbf{1}_l^TX_{21}\mathbf{1}_e-\mathbf{1}_l^TX_{22}\mathbf{1}_f=x_2\mathbf{1}_l^T\mathbf{1}_l$ from $\mathbf{1}_k^TX_{11}\mathbf{1}_e-\mathbf{1}_k^TX_{12}\mathbf{1}_f=x_1\mathbf{1}_k^T\mathbf{1}_k$, we have 
	$$
	(\mathbf{1}_k^TX_{11}-\mathbf{1}_l^TX_{21})\mathbf{1}_e-(\mathbf{1}_k^TX_{12}-\mathbf{1}_l^TX_{22})\mathbf{1}_f=x_1\mathbf{1}_k^T\mathbf{1}_k-x_2\mathbf{1}_l^T\mathbf{1}_l.
	$$
	Hence, $e\alpha_1-f\alpha_2=kx_1-lx_2$. Similarly, we can find $c\beta_1-d\beta_2=py_1-qy_2$ from $\mathbf{1}_p^TY_{11}\mathbf{1}_c-\mathbf{1}_p^TY_{12}\mathbf{1}_d=y_1\mathbf{1}_p^T\mathbf{1}_p$ and $\mathbf{1}_q^TY_{21}\mathbf{1}_c-\mathbf{1}_q^TY_{22}\mathbf{1}_d=y_2\mathbf{1}_q^T\mathbf{1}_q$; and $a\gamma_1-b\gamma_2=rz_1-sz_2$ from $\mathbf{1}_r^TZ_{11}\mathbf{1}_a-\mathbf{1}_r^TZ_{12}\mathbf{1}_b=z_1\mathbf{1}_r^T\mathbf{1}_r$ and $\mathbf{1}_s^TZ_{21}\mathbf{1}_a-\mathbf{1}_s^TZ_{22}\mathbf{1}_b=z_2\mathbf{1}_s^T\mathbf{1}_s$. Again by a similar argument, one can verify that \ref{rank22 c1} implies \ref{rank22 c3}.
\end{proof}

\begin{remark}\label{Remark:rank 22, zero cases}
	Let us continue the notation and result of Theorem \ref{Thm:Grammates Rank22}. Suppose $q=0$. This is equivalent to the fourth row partition of $E$ being annihilated. Since $Y_{21}$ and $Y_{22}$ in the corresponding $A$ are removed, the parameter $y_2$ does not appear for this case. Furthermore, examining the proof of Theorem \ref{Thm:Grammates Rank22}, in order to attain the equivalent condition for $A$ and $A+E$ to be Gram mates, we only need to modify the conditions as follows: $x_1=-z_2$, $x_2=y_1=-z_1$ and $x_1+x_2=-(z_1+z_2)=e-f$; that is, we can obtain the equivalent condition by annihilating the constraints involved with $y_2$ in \ref{rank22 c1} and \ref{rank22 c2}. In this manner, one can check that if the sub-indices indicating positions of row or column partitions in $E$ are zero, then the equivalent condition is achieved by removing the constraints involved with the parameters in \ref{rank22 c1} and \ref{rank22 c2} that do not appear, due to the resulting matrix being obtained from $E$ by deleting row or column partitions corresponding to the sub-indices. 
\end{remark}

\begin{example}\label{type1}
	Maintaining the notation and result of Theorem \ref{Thm:Grammates Rank22}, for $n\geq 1$, set $k=a=s=f=n+1$, $l=b=r=e=n$, $p=c=0$, and $q=d=1$. By Remark \ref{Remark:rank 22, zero cases}, we obtain the equivalent condition for $A$ and $A+E$ to be Gram mates, from \ref{rank22 c1} and \ref{rank22 c2} by modifying as follows: $x_1=y_2=-z_2$, $x_2=-z_1$, $x_1+x_2=-(z_1+z_2)=-1$, $\gamma_1=\beta_2=-\alpha_2$, $\gamma_2=-\alpha_1$, $\gamma_1+\gamma_2=-(\alpha_1+\alpha_2)=-1$.
	Since $p=c=0$ and $q=d=1$, we have $Y=-y_2=-\beta_2$. For ease of exposition, permuting the $4^\text{th}$ and $5^\text{th}$ row partitions and the $4^\text{th}$ and $5^\text{th}$ column partitions, we have $$E=\left[\begin{array}{cc|c|cc}
	J_{n+1} & -J_{n+1,n} & -\mathbf{1}_{n+1} & \multicolumn{2}{c}{\multirow{2}{*}{$0$}}  \\
	-J_{n,n+1} & J_{n} & \mathbf{1}_{n} & \multicolumn{2}{c}{} \\\hline
	-\mathbf{1}^T_{n+1} & \mathbf{1}^T_{n} & 0 & \mathbf{1}^T_{n+1} & -\mathbf{1}^T_n\\\hline
	\multicolumn{2}{c|}{\multirow{2}{*}{$0$}} & \mathbf{1}_{n+1} & -J_{n+1} & J_{n+1,n}\\
	\multicolumn{2}{c|}{} & -\mathbf{1}_{n} & J_{n,n+1} & -J_{n}
	\end{array}\right].$$ 
	Then, one can verify from the equivalent condition that $A$ is a Gram mate to $A+E$ if and only if $A$ is of the form as follows: 
	$$A=\left[\begin{array}{cc|c|cc}
	0 & J_{n+1,n} & \mathbf{1}_{n+1} &  \multicolumn{2}{c}{\multirow{2}{*}{$M_1$}} \\
	J_{n,n+1} & 0 & 0 & \multicolumn{2}{c}{} \\\hline
	\mathbf{1}^T_{n+1} & 0 & m & 0 & \mathbf{1}^T_{n}\\\hline
	\multicolumn{2}{c|}{\multirow{2}{*}{$M_2$}} & 0 & J_{n+1} & 0\\
	\multicolumn{2}{c|}{} & \mathbf{1}_{n} & 0 & J_{n}
	\end{array}\right]$$
	where for $m\in \{0,1\}$, $M_1$ and $M_2$ are $(2n+1)\times(2n+1)$ matrices such that
	$M_i\left[\begin{array}{c}
	\mathbf{1}_{n+1}\\
	-\mathbf{1}_n
	\end{array}\right]=\left[\begin{array}{c}
	m\mathbf{1}_{n+1}\\
	(1-m)\mathbf{1}_n
	\end{array}\right]$ and $M_i^T\left[\begin{array}{c}
	\mathbf{1}_{n+1}\\
	-\mathbf{1}_n
	\end{array}\right]=\left[\begin{array}{c}
	m\mathbf{1}_{n+1}\\
	(1-m)\mathbf{1}_n
	\end{array}\right]$ for $i\in \{1,2\}$.
\end{example}

\begin{remark}
	Let $E=\begin{bmatrix}
	\wtE & 0\\
	0 & 0
	\end{bmatrix}$ be a realizable matrix where $\wtE$ is of type \ref{index m5}. Then, Gram mates via $E$ can be characterized as in Remark \ref{Remark: Gram mates from wtE to E}.
\end{remark}

\begin{remark}\label{Remark:conditions for indices of M5}
	Continuing the hypotheses and notation in Theorem \ref{Thm:Grammates Rank22}, let us consider $e\alpha_1-f\alpha_2=kx_1-lx_2$, $c\beta_1-d\beta_2=py_1-qy_2$ and $a\gamma_1-b\gamma_2=rz_1-sz_2$ where $x_1=y_2=-z_2$, $x_2=y_1=-z_1$, $\gamma_1=\beta_2=-\alpha_2$, $\gamma_2=\beta_1=-\alpha_1$, $x_1+x_2=e-f$ and $\alpha_1+\alpha_2=k-l$. Then,
	\begin{align*}
	e\alpha_1-f\alpha_2&=kx_1-lx_2,\\
	-c\alpha_1+d\alpha_2&=px_2-qx_1,\\
	-a\alpha_2+b\alpha_1&=-rx_2+sx_1.
	\end{align*}
	Furthermore, we obtain a linear system
	\begin{align}\label{temp;linearsystem}
	\begin{split}
	(e+f)\alpha_1-(k+l)x_1&=f(k-l)-l(e-f),\\
	(c+d)\alpha_1-(p+q)x_1&=d(k-l)-p(e-f),\\
	(a+b)\alpha_1-(r+s)x_1&=a(k-l)-r(e-f).
	\end{split}
	\end{align}
	Note that the equations $(e+f)(p+q)=(k+l)(c+d)$, $(e+f)(r+s)=(k+l)(a+b)$ and $(c+d)(r+s)=(p+q)(a+b)$, which are from the determinant of the coefficient matrix of each pair of equations in \eqref{temp;linearsystem}, are equivalent to $\frac{e+f}{k+l}=\frac{c+d}{p+q}=\frac{a+b}{r+s}$, \textit{i.e.}, the ratios of the numbers of rows and columns of $X$, $Y$ and $Z$ are in proportion. For such case, the sizes of $X$, $Y$ and $Z$ are said to be \textit{proportional}.
	
	We observe that a pair of $\alpha_1$ and $x_1$ completely determine the integers in \ref{rank22 c1} and \ref{rank22 c2}. Using $a-b=d-c=f-e$ and $k-l=q-p=s-r$ from $E\mathbf{1}=0$ and $\mathbf{1}^TE=0^T$, respectively, it can be checked that $(\alpha_1,x_1)=(\frac{k-l}{2},\frac{e-f}{2})$ is a solution to the linear system \eqref{temp;linearsystem}. Then, $\alpha_1=\alpha_2$ and $x_1=x_2$. Considering $(0,1)$ matrices, $k-l$ and $e-f$ are even whenever there exist Gram mates $A$ of form \eqref{matrix A rank 22} and $A+E$ with $(\alpha_1,x_1)=(\frac{k-l}{2},\frac{e-f}{2})$.
	
	Suppose that there are two matrices among $X$, $Y$ and $Z$ such that their sizes are not proportional. Then, the system \eqref{temp;linearsystem} has a unique solution $(\alpha_1,x_1)=(\frac{k-l}{2},\frac{e-f}{2})$. So, if $k-l$ or $e-f$ is odd, then there are no $(0,1)$ matrices $X$, $Y$ and $Z$ satisfying \ref{rank22 c1} and \ref{rank22 c2}. In other words, if $E$ is realizable, then $k-l$ and $e-f$ both are even, \textit{i.e.,} $k+l$ and $e+f$ both are even.
	
	Assume that the sizes of $X$, $Y$ and $Z$ are proportional. Considering the equation $(e+f)\alpha_1-(k+l)x_1=f(k-l)-l(b-a)$ with a solution $(\alpha_1,x_1)=(\frac{k-l}{2},\frac{e-f}{2})$, we obtain $(\alpha_1,x_1)=t(k+l,e+f)+(\frac{k-l}{2},\frac{e-f}{2})$ for any $t$ as the solutions to \eqref{temp;linearsystem}. Set $t=\frac{1}{2}$. Then, $(\alpha_1,x_1)=(k,e)$. So, $x_1=y_2=-z_2=e$, $x_2=y_1=-z_1=-f$, $\gamma_1=\beta_2=-\alpha_2=l$ and $\gamma_2=\beta_1=-\alpha_1=-k$. This particular solution is used for showing that $E$ is realizable under the condition that $k-l$ or $e-f$ is odd, \textit{i.e.,} $k+l$ or $e+f$ is odd.
\end{remark}

Continuing the hypotheses and notation in Theorem \ref{Thm:Grammates Rank22}, by Remark \ref{Remark:sizes for M5} the number of rows (resp. columns) in each of $X$, $Y$ and $Z$ has the same parity. Let $X$ be an $m\times n$ $(0,1)$ matrix. We shall establish the equivalent condition for $E$ to be realizable (Theorem \ref{Thm:realizable E for rank22}) by considering two cases: $m$ and $n$ are even (Lemma \ref{Lemma:construction for X when a+b even}), and $m$ or $n$ is odd (Lemma \ref{Lemma:construction for X when a+b odd}). Note that we only need to show the existence of $X$, $Y$ and $Z$ satisfying \ref{rank22 c1} and \ref{rank22 c2} of Theorem \ref{Thm:Grammates Rank22}.

\begin{lemma}\label{Lemma:WLOG for contstruction M}
	Let $m_1,m_2,n_1,n_2>0$. Let $X=\begin{bmatrix}
	X_{11} & X_{12}\\
	X_{21} & X_{22}\end{bmatrix}$ be a $(0,1)$ matrix where $X_{11}$ and $X_{22}$ are $m_1\times n_1$ and $m_2\times n_2$ matrices, respectively. Suppose that $X\begin{bmatrix}
	\mathbf{1}_{n_1}\\-\mathbf{1}_{n_2}
	\end{bmatrix}=\begin{bmatrix}
	a_1\mathbf{1}_{m_1}\\a_2\mathbf{1}_{m_2}
	\end{bmatrix}$ and $X^T\begin{bmatrix}
	\mathbf{1}_{m_1}\\-\mathbf{1}_{m_2}
	\end{bmatrix}=\begin{bmatrix}
	b_1\mathbf{1}_{n_1}\\b_2\mathbf{1}_{n_2}
	\end{bmatrix}$. For $X_1=\begin{bmatrix}
	X_{12} & X_{11}\\
	X_{22} & X_{21}\end{bmatrix}$, we have $X_1\begin{bmatrix}
	\mathbf{1}_{n_2}\\-\mathbf{1}_{n_1}
	\end{bmatrix}=\begin{bmatrix}
	-a_1\mathbf{1}_{m_1}\\-a_2\mathbf{1}_{m_2}
	\end{bmatrix}$ and $X_1^T\begin{bmatrix}
	\mathbf{1}_{m_1}\\-\mathbf{1}_{m_2}
	\end{bmatrix}=\begin{bmatrix}
	b_2\mathbf{1}_{n_2}\\b_1\mathbf{1}_{n_1}
	\end{bmatrix}$. Furthermore, given $X_2=\begin{bmatrix}
	X_{22} & X_{21}\\
	X_{12} & X_{11}\end{bmatrix}$, we have $X_2\begin{bmatrix}
	\mathbf{1}_{n_2}\\-\mathbf{1}_{n_1}
	\end{bmatrix}=\begin{bmatrix}
	-a_2\mathbf{1}_{m_2}\\-a_1\mathbf{1}_{m_1}
	\end{bmatrix}$ and $X_2^T\begin{bmatrix}
	\mathbf{1}_{m_2}\\-\mathbf{1}_{m_1}
	\end{bmatrix}=\begin{bmatrix}
	-b_2\mathbf{1}_{n_2}\\-b_1\mathbf{1}_{n_1}
	\end{bmatrix}$.
\end{lemma}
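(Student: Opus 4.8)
The plan is to recognize that $X_1$ and $X_2$ are obtained from $X$ purely by permuting its block columns and block rows, so that each of the six claimed identities reduces to one of the two hypotheses once a permutation matrix is moved past $X$ (or $X^T$). Concretely, writing $Q=\begin{bmatrix} 0 & I_{n_1}\\ I_{n_2} & 0\end{bmatrix}$ for the block-column transposition (so $Q$ has row partition $(n_1,n_2)$ and column partition $(n_2,n_1)$) and $R=\begin{bmatrix} 0 & I_{m_2}\\ I_{m_1} & 0\end{bmatrix}$ for the corresponding block-row transposition, one reads off directly from the definitions that $X_1=XQ$ and $X_2=RXQ$.

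First I would treat $X_1$. Using $X_1=XQ$ together with $Q\begin{bmatrix}\mathbf{1}_{n_2}\\-\mathbf{1}_{n_1}\end{bmatrix}=-\begin{bmatrix}\mathbf{1}_{n_1}\\-\mathbf{1}_{n_2}\end{bmatrix}$, I get $X_1\begin{bmatrix}\mathbf{1}_{n_2}\\-\mathbf{1}_{n_1}\end{bmatrix}=-X\begin{bmatrix}\mathbf{1}_{n_1}\\-\mathbf{1}_{n_2}\end{bmatrix}=-\begin{bmatrix}a_1\mathbf{1}_{m_1}\\a_2\mathbf{1}_{m_2}\end{bmatrix}$ by the first hypothesis. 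For the transpose, from $X_1^T=Q^TX^T$ I note that $X^T$ acts on the unchanged vector $\begin{bmatrix}\mathbf{1}_{m_1}\\-\mathbf{1}_{m_2}\end{bmatrix}$, so the second hypothesis applies directly and then $Q^T$ merely swaps the two block components, giving $\begin{bmatrix}b_2\mathbf{1}_{n_2}\\b_1\mathbf{1}_{n_1}\end{bmatrix}$.

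Next I would handle $X_2=RXQ$ by the same device, now also inserting $R$ on the left. For the first identity the computation above produces $-R\begin{bmatrix}a_1\mathbf{1}_{m_1}\\a_2\mathbf{1}_{m_2}\end{bmatrix}=\begin{bmatrix}-a_2\mathbf{1}_{m_2}\\-a_1\mathbf{1}_{m_1}\end{bmatrix}$, the claimed swap of $a_1,a_2$. For the transpose $X_2^T=Q^TX^TR^T$, the vector $\begin{bmatrix}\mathbf{1}_{m_2}\\-\mathbf{1}_{m_1}\end{bmatrix}$ is sent by $R^T$ to $-\begin{bmatrix}\mathbf{1}_{m_1}\\-\mathbf{1}_{m_2}\end{bmatrix}$, so both a sign change and a block swap occur, and applying the second hypothesis and then $Q^T$ yields $\begin{bmatrix}-b_2\mathbf{1}_{n_2}\\-b_1\mathbf{1}_{n_1}\end{bmatrix}$.

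There is no genuine obstacle here: the entire content is tracking when a permutation introduces a sign flip (precisely when it acts on a vector whose two blocks carry opposite signs) versus a mere interchange of blocks (when it acts on the output of $X$ or $X^T$). The only thing requiring care is matching the row and column partitions of $Q$ and $R$ to the sizes $n_1,n_2,m_1,m_2$ so that the products $XQ$, $RXQ$, $Q^TX^T$, and $Q^TX^TR^T$ are well defined; once those partitions are fixed the six identities fall out at once. Alternatively one may simply expand each matrix--vector product into its four blockwise scalar equations (for example $X_{11}\mathbf{1}_{n_1}-X_{12}\mathbf{1}_{n_2}=a_1\mathbf{1}_{m_1}$, and so on) and read off the conclusions, but the permutation formulation makes the sign pattern transparent.
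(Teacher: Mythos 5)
Your proof is correct. The paper actually states this lemma without any proof at all, treating it as an immediate blockwise computation (the four scalar identities such as $X_{11}\mathbf{1}_{n_1}-X_{12}\mathbf{1}_{n_2}=a_1\mathbf{1}_{m_1}$ that you mention at the end), and your permutation-matrix formulation $X_1=XQ$, $X_2=RXQ$ is just a tidier packaging of that same direct verification; all six identities check out, including the sign bookkeeping for $Q$ and $R^T$ acting on the alternating-sign vectors.
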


\begin{lemma}\label{Lemma:construction for X when a+b even}
	Let $m_1,m_2,n_1,n_2>0$. Suppose that $m_1+m_2$ and $n_1+n_2$ are even. Then, there exists an $(m_1+m_2)\times (n_1+ n_2)$ $(0,1)$ matrix $X=\begin{bmatrix}
	X_{11} & X_{12}\\
	X_{21} & X_{22}\end{bmatrix}$, where $X_{11}$ and $X_{22}$ are $m_1\times n_1$ and $m_2\times n_2$ matrices, respectively, such that $X\begin{bmatrix}
	\mathbf{1}_{n_1}\\-\mathbf{1}_{n_2}
	\end{bmatrix}=\frac{n_1-n_2}{2}\begin{bmatrix}
	\mathbf{1}_{m_1}\\\mathbf{1}_{m_2}
	\end{bmatrix}$ and $X^T\begin{bmatrix}
	\mathbf{1}_{m_1}\\-\mathbf{1}_{m_2}
	\end{bmatrix}=\frac{m_1-m_2}{2}\begin{bmatrix}
	\mathbf{1}_{n_1}\\\mathbf{1}_{n_2}
	\end{bmatrix}$. 
\end{lemma}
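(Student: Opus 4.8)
The plan is to recast the two required matrix identities as a pair of ``signed line sum'' conditions on a single $(0,1)$ matrix, and then transform them, by a block-complementation bijection, into \emph{ordinary} constant line sums, for which existence is classical.

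First I would set $M=m_1+m_2$, $N=n_1+n_2$, $s=\frac{n_1-n_2}{2}$ and $t=\frac{m_1-m_2}{2}$; since $M$ and $N$ are even, $s$ and $t$ are integers. Writing $X=\begin{bmatrix} X_{11} & X_{12}\\ X_{21} & X_{22}\end{bmatrix}$, the condition $X\begin{bmatrix}\mathbf{1}_{n_1}\\-\mathbf{1}_{n_2}\end{bmatrix}=s\begin{bmatrix}\mathbf{1}_{m_1}\\\mathbf{1}_{m_2}\end{bmatrix}$ says exactly that in every row the number of ones among the first $n_1$ entries exceeds the number among the last $n_2$ entries by $s$; the transposed condition says that in every column the number of ones among the top $m_1$ entries exceeds the number among the bottom $m_2$ entries by $t$. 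So it suffices to build an $M\times N$ $(0,1)$ matrix with these two uniform signed line sums.

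The key step is the involution $C$ that complements the two off-diagonal blocks $X_{12}$ and $X_{21}$ (sending each to $J$ minus itself) while leaving $X_{11},X_{22}$ unchanged; $C$ is a bijection on $(0,1)$ matrices respecting the block partition. I would verify row by row and column by column that $X$ meets the signed conditions if and only if $C(X)$ has all row sums equal to $N/2$ and all column sums equal to $M/2$: for a top row the sum of $C(X)$ is $p_i+(n_2-q_i)$, where $p_i,q_i$ count the ones in the left and right parts of that row of $X$, and this equals $N/2$ precisely when $p_i-q_i=s$; the remaining three computations (bottom rows, and left and right columns) are identical and each yields an ``if and only if.'' Thus the lemma reduces to exhibiting a single $M\times N$ $(0,1)$ matrix that is $\tfrac{N}{2}$-regular in its rows and $\tfrac{M}{2}$-regular in its columns, and then applying $C$ to it.

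Finally, such a balanced matrix exists: the counts match, $M\cdot\frac{N}{2}=N\cdot\frac{M}{2}$, and the Gale--Ryser conditions for the constant row sequence $(\frac{N}{2},\dots,\frac{N}{2})$ and column sequence $(\frac{M}{2},\dots,\frac{M}{2})$ are immediate, since for each $\ell$ one checks $\ell\,\frac{N}{2}\le N\min(\frac{M}{2},\ell)$; alternatively one can write down a realization explicitly by distributing the ones in a balanced cyclic pattern. I expect the only genuine obstacle to be spotting the block-complementation $C$: once the signed conditions are turned into ordinary constant line sums the problem is entirely standard, and this route, pleasantly, never needs the sign bookkeeping of Lemma \ref{Lemma:WLOG for contstruction M}.
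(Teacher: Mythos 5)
Your proof is correct, but it takes a genuinely different route from the paper's. The paper handles the trivial case $m_1=m_2$, $n_1=n_2$ by taking $X=0$, invokes Lemma \ref{Lemma:WLOG for contstruction M} to reduce without loss of generality to $m_1>m_2$ with $n_1=n_2$ or $n_1>n_2$, and then writes down explicit block matrices built from all-ones blocks $J_{\alpha,\beta}$ in each case. You instead observe that the two conditions are uniform signed line sums, and that complementing the off-diagonal blocks $X_{12}\mapsto J-X_{12}$, $X_{21}\mapsto J-X_{21}$ is a bijection carrying the signed conditions exactly onto the conditions ``all row sums equal $\tfrac{N}{2}$, all column sums equal $\tfrac{M}{2}$'' (I checked all four row/column cases; the arithmetic works, using only that $s=\tfrac{n_1-n_2}{2}$ and $t=\tfrac{m_1-m_2}{2}$ are integers). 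Existence of such a half-regular matrix is then immediate --- the checkerboard matrix with a $1$ in position $(i,j)$ iff $i+j$ is even already works since $M$ and $N$ are even, so you do not even need Gale--Ryser here. Your approach buys a uniform, case-free argument and dispenses with Lemma \ref{Lemma:WLOG for contstruction M} entirely; it also echoes the complementation device $B_i=J-A_i$ used elsewhere in the paper. What the paper's approach buys is completely explicit witnesses $X$ assembled from $J$-blocks, though nothing downstream appears to need that explicitness. The only caveat is cosmetic: your Gale--Ryser verification as written swaps the roles of the two constant sequences, but since the check is symmetric (and unnecessary given the checkerboard construction) this does not affect correctness.
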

\begin{proof}
	If $m_1=m_2$ and $n_1=n_2$, then $X$ can be chosen as the zero matrix. By Lemma \ref{Lemma:WLOG for contstruction M}, we only need to consider two cases: \begin{enumerate*}[label=(\roman*)]
		\item $m_1>m_2$ and $n_1=n_2$, \item $m_1>m_2$ and $n_1>n_2$.
	\end{enumerate*}
	If $m_1>m_2$ and $n_1=n_2$, then $X=\begin{bmatrix}
	J_{\frac{m_1-m_2}{2},n_1+n_2}\\0
	\end{bmatrix}$ is one of our desired matrices. Suppose $m_1>m_2$ and $n_1>n_2$. Let $\alpha=\frac{m_1-m_2}{2}$ and $\beta=\frac{n_1-n_2}{2}$. Then, it is straightforward to check that the following matrix can be our desired matrix: 
	\[X=\left[\begin{array}{ccc|c}
	J_{\alpha,\beta} & 0 & 0 & 0 \\ 
	0 & 0 & J_{m_2,\beta} & 0 \\
	0 & J_{\alpha,n_2} & J_{\alpha,\beta} & J_{\alpha,n_2} \\ \hline
	0 & 0 & J_{m_2,\beta} & 0
	\end{array}\right].
	\]
\end{proof}

Let $n>0$, and let $\alpha=(\alpha_1,\dots,\alpha_n)$ and $\beta=(\beta_1,\dots,\beta_n)$ be real vectors. Suppose that $\alpha'=(\alpha_1',\dots,\alpha_n')$ and $\beta'=(\beta_1',\dots,\beta_n')$ are obtained from $\alpha$ and $\beta$, respectively, by rearrangements such that $\alpha_1'\geq\dots\geq\alpha_n'$ and $\beta_1'\geq\dots\geq\beta_n'$. The vector $\alpha$ \textit{majorizes} $\beta$, denoted by $\alpha\succ\beta$, if $\sum_{i=1}^{k}\alpha_i'\geq\sum_{i=1}^{k}\beta_i'$ for all $1\leq k\leq n$, and $\sum_{i=1}^{n}\alpha_i'=\sum_{i=1}^{n}\beta_i'$. For non-negative integers $\alpha_1,\dots,\alpha_n$, define $\alpha_i^*=|\{\alpha_j|\alpha_j\geq i, j=1,\dots,n\}|$ for $i=1,\dots,n$. The vector $\alpha^*:=(\alpha_1^*,\dots,\alpha_n^*)$ is said to be \textit{conjugate} to $\alpha$. For instance, if $\alpha=(3,3,3,3,3)$, then $\alpha^*=(5,5,5,0,0)$.

Let $\mathcal{U}(R,S)$ denote the set of all $(0,1)$ matrices with row sum vector $R$ and column sum vector $S$. Let $R=(r_1,\dots,r_m)$ and $S=(s_1,\dots,s_n)$ be nonnegative integral vectors. In the context of majorization, we may adjoin zeros to $S$ (resp. $R$) if $m>n$ (resp. $m<n$). 

\begin{theorem}\label{Thm:Gale Ryser}\cite{Ryser:zeros-ones}(the Gale--Ryser theorem)
	Let $R=(r_1,\dots,r_m)$ and $S=(s_1,\dots,s_n)$ be nonnegative integral vectors. Then, there exists a $(0,1)$ matrix in $\mathcal{U}(R,S)$ if and only if $S\prec R^*$ and $r_i\leq n$ for $i=1,\dots,m$. 
\end{theorem}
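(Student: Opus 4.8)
The plan is to prove both directions, with the forward (necessity) direction a short counting argument and the reverse (sufficiency) direction resting on the structure of majorization together with a single-entry swapping construction.

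For necessity, suppose $A\in\mathcal{U}(R,S)$. Since $A$ has exactly $n$ columns, each row sum obeys $r_i\le n$. To obtain $S\prec R^{*}$, I would first assume without loss of generality that $S$ is written in non-increasing order, and record the identity $\sum_{i=1}^{k}r_i^{*}=\sum_{j=1}^{m}\min(r_j,k)$, which follows by counting the pairs $(i,j)$ with $i\le k$ and $r_j\ge i$ in two ways. Then I would observe that $\sum_{i=1}^{k}s_i$ equals the number of $1$'s in the first $k$ columns of $A$, and that row $j$ contributes at most $\min(r_j,k)$ of them (at most $r_j$ ones lie in that row, and at most $k$ of them fall in the first $k$ columns). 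Summing over $j$ yields $\sum_{i=1}^{k}s_i\le\sum_{j}\min(r_j,k)=\sum_{i=1}^{k}r_i^{*}$, and equality at $k=n$ holds because both sides count the total number of $1$'s in $A$. This is precisely the statement $S\prec R^{*}$.

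For sufficiency, assume $S\prec R^{*}$ and $r_i\le n$ for all $i$. The starting point is the left-justified matrix $A_0$ whose $j^{\text{th}}$ row is $r_j$ ones followed by $n-r_j$ zeros; the hypothesis $r_j\le n$ guarantees this is a genuine $(0,1)$ matrix, its row sum vector is $R$, and a direct count shows its column sum vector is exactly $R^{*}$. I would then invoke the Hardy--Littlewood--P\'olya characterization of majorization: since $S\prec R^{*}$, the vector $S$ can be reached from $R^{*}$ by a finite sequence of elementary transfers, each decreasing one coordinate by $1$ and increasing a strictly smaller coordinate by $1$. The decisive step is to realize each transfer by a local change of the matrix that preserves all row sums. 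If the current column sums have the $a^{\text{th}}$ entry strictly larger than the $b^{\text{th}}$, then column $a$ has more $1$'s than column $b$, so some row holds a $1$ in column $a$ and a $0$ in column $b$; switching those two entries moves one unit of column sum from $a$ to $b$ while leaving that row's sum, and every other row's sum, unchanged. Performing one such swap for each transfer in the sequence carries $A_0$ to a $(0,1)$ matrix with row sum vector $R$ and column sum vector $S$.

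The main obstacle I anticipate is the bookkeeping in the sufficiency direction: I must ensure that the transfer lemma supplies transfers between a genuinely larger and a genuinely smaller coordinate (so the required $1/0$ row always exists), and that a given swap does not undo the progress of earlier swaps. Both are handled by the observation above, since the existence of the swapping row depends only on the instantaneous strict inequality between the two column sums, and each swap alters only the two columns involved, so the invariant that all row sums equal $R$ persists throughout. The remaining routine points---the counting identity for $R^{*}$ and the explicit column sums of $A_0$---are straightforward and need only be checked by direct computation.
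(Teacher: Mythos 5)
The paper does not prove this statement at all: it is quoted as a classical result with a citation to Ryser, and the text explicitly defers the construction of a matrix in $\mathcal{U}(R,S)$ to that reference. So there is no in-paper proof to compare against, and your argument has to stand on its own. It does. The necessity half is the standard double count: the identity $\sum_{i\le k}r_i^*=\sum_j\min(r_j,k)$ is verified correctly, and bounding the number of ones that row $j$ can place in the $k$ heaviest columns by $\min(r_j,k)$ gives exactly the partial-sum inequalities, with equality of totals at the end. The sufficiency half is also the classical route (and close in spirit to Ryser's original argument): start from the left-justified matrix, whose column sum vector is $R^*$, and realize each unit transfer of column sums by switching a $1$ and a $0$ within a single row; your observation that a strict inequality $c_a>c_b$ between two current column sums forces the existence of a row with a $1$ in column $a$ and a $0$ in column $b$ is exactly what makes each step legal, and row sums are manifestly preserved. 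The one load-bearing ingredient you invoke rather than prove is that $S\prec R^*$ for integer vectors implies $S$ is reachable from $R^*$ by finitely many \emph{unit} transfers from a strictly larger to a strictly smaller coordinate; the usual Hardy--Littlewood--P\'olya statement is phrased via $T$-transforms on real vectors, so the integral unit-transfer version (equivalently, that the dominance order on partitions is generated by single box moves) deserves either a citation or a short induction on the sum of the partial-sum gaps. Two trivial loose ends worth a sentence each: you should permute columns at the end to match the prescribed order of $S$, and note the paper's convention of padding $R$ or $S$ with zeros so the majorization is between vectors of equal length. None of this is a gap in substance.
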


We refer the reader to \cite{Ryser:zeros-ones} for the construction of a matrix in $\mathcal{U}(R,S)$. From Theorem \ref{Thm:Gale Ryser}, we immediately have two lemmas as follows.

\begin{lemma}\label{Lemma:existence of matrix with regular vectors}
	Let $R=(r,\dots,r)$ and $S=(s,\dots,s)$ be nonnegative integral vectors of size $m(\geq s)$ and $n(\geq r)$, respectively. Then, $rm=sn$ if and only if there exists a matrix in $\cU(R,S)$.
\end{lemma}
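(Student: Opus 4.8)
The plan is to read off the "only if" direction from a trivial double count and to derive the "if" direction as a direct application of the Gale--Ryser theorem (Theorem \ref{Thm:Gale Ryser}). For the forward implication, suppose $M\in\cU(R,S)$. Summing all entries of $M$ first by rows and then by columns gives $rm$ and $sn$ respectively, so $rm=sn$; this uses none of the hypotheses on the sizes and is immediate.

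For the converse, assume $rm=sn$ with $m\geq s$ and $n\geq r$. First I would verify the entrywise bound in Theorem \ref{Thm:Gale Ryser}: every row sum equals $r$, and $r\leq n$ by hypothesis, so $r_i\leq n$ for all $i$. The remaining hypothesis to check is $S\prec R^*$. Since every entry of $R$ equals $r$, its conjugate is $R^*=(\underbrace{m,\dots,m}_{r},0,\dots,0)$; that is, $R_i^*=m$ for $1\leq i\leq r$ and $R_i^*=0$ for $i>r$. Both $S=(s,\dots,s)$ and $R^*$ are already arranged in non-increasing order, so their partial sums can be compared directly after padding the shorter vector with zeros. The total sums agree exactly because $\sum_i S_i=sn=rm=\sum_i R_i^*$, which is precisely where the hypothesis $rm=sn$ is used.

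It then remains to confirm the partial-sum inequalities $\sum_{i=1}^{k}S_i\leq\sum_{i=1}^{k}R_i^*$ for every $k$, which I would split into three ranges. For $1\leq k\leq r$ (note $r\leq n$, so the first $k$ entries of $S$ all equal $s$) the inequality reads $ks\leq km$ and follows from $m\geq s$; for $r<k\leq n$ it reads $ks\leq rm=sn$, which reduces to $k\leq n$ and becomes an equality at $k=n$; and for $k>n$ both sides equal the common total $rm=sn$. Hence $S\prec R^*$, and Theorem \ref{Thm:Gale Ryser} produces a matrix in $\cU(R,S)$, completing the converse.

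The argument is essentially routine, and I do not expect a genuine obstacle; the one step demanding care is the correct identification of $R^*$ together with the bookkeeping of vector lengths, since zeros must be appended before the partial sums are compared. I therefore anticipate that small index/length slips in the majorization check, rather than any real difficulty, are the main risk.
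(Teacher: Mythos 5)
Your proof is correct and follows exactly the route the paper intends: the paper states this lemma without proof as an immediate consequence of the Gale--Ryser theorem, and your write-up simply fills in the double-counting for the forward direction and the majorization check $S\prec R^*$ (with $R^*=(m,\dots,m,0,\dots,0)$, $m$ repeated $r$ times after padding) for the converse. The case analysis on $k$ and the use of $m\geq s$ and $n\geq r$ are all as they should be.
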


\begin{lemma}\label{Lemma:evenS}
	Let $S=(s_1,\dots,s_n)$ be a nonnegative integral vector where $\sum_{i=1}^n s_i=\ell$. Let $m$ be a positive integer such that $s_i\leq m$ for all $i=1,\dots n$ and $\ell=qm+r$ for some $q\geq 0$ and $0\leq r\leq m-1$. Suppose $$R=(\underbrace{q+1,\dots,q+1}_{r\;\text{times}},\underbrace{q,\dots,q}_{m-r\;\text{times}}).$$
	Then, there exists a matrix $A\in\mathcal{U}(R,S)$.
\end{lemma}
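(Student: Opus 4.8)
The plan is to apply the Gale--Ryser theorem (Theorem \ref{Thm:Gale Ryser}) to the pair $(R,S)$, so I must check its two hypotheses: that every entry of $R$ is at most $n$, and that $S\prec R^*$. As a preliminary sanity check the total sums agree, since $\sum R = r(q+1)+(m-r)q = qm+r = \ell = \sum S$, which is consistent with the equality-of-totals requirement built into the majorization relation.

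First I would verify the row-bound $r_i\le n$. The entries of $R$ are only $q$ and $q+1$, so it suffices to bound the larger one. Because each $s_i\le m$ and $S$ has $n$ entries, we have $\ell=\sum_{i=1}^n s_i\le nm$. If $r\ge 1$ then $qm< qm+r=\ell\le nm$ forces $q<n$, hence $q+1\le n$; if $r=0$ then $qm=\ell\le nm$ gives $q\le n$. In either case the largest entry of $R$ is at most $n$. This is the one place where the standing hypothesis $s_i\le m$ is genuinely used, and it is the only step that is not purely formal.

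Next I would compute the conjugate $R^*$ explicitly and read off the majorization. Since $R$ consists of $r$ parts equal to $q+1$ and $m-r$ parts equal to $q$, for $1\le i\le q$ every part of $R$ is at least $i$, so $R^*_i=m$; for $i=q+1$ exactly the $r$ parts equal to $q+1$ qualify, so $R^*_{q+1}=r$; and $R^*_i=0$ for $i>q+1$. Thus $R^*=(\underbrace{m,\dots,m}_{q},r,0,\dots)$, which is already non-increasing because $r\le m-1<m$, and $\sum R^*=qm+r=\ell$. Writing $s_1'\ge\cdots\ge s_n'$ for the entries of $S$ in non-increasing order, the bound $s_i\le m$ gives $\sum_{i=1}^k s_i'\le km=\sum_{i=1}^k R_i^*$ for $k\le q$, while for $k\ge q+1$ we have $\sum_{i=1}^k R_i^*=\ell\ge\sum_{i=1}^k s_i'$; together with $\sum R^*=\sum S$ this is precisely $S\prec R^*$. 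Both hypotheses of Theorem \ref{Thm:Gale Ryser} now hold, producing a matrix $A\in\mathcal{U}(R,S)$.

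The computations are entirely routine; the only genuine content is the observation that $R^*$ has the simple staircase shape above and that the crude estimate $\sum_{i=1}^k s_i'\le km$ (valid since every column sum is capped by $m$) matches the partial sums of $R^*$ term by term. The sole nontrivial hypothesis to track is $r_i\le n$, for which the bound $\ell\le nm$ coming from $s_i\le m$ is essential; I do not anticipate any real obstacle beyond correctly handling the boundary case $r=0$ (equivalently $q=n$), where $R^*$ is the constant vector $(m,\dots,m)$ and $A$ is forced to be the all-ones matrix.
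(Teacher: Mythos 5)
Your proof is correct and follows the same route as the paper: compute $R^*=(m,\dots,m,r,0,\dots,0)$ with $m$ appearing $q$ times, deduce $S\prec R^*$ from $s_i\le m$, check $r_i\le n$ via $\ell\le mn$, and invoke the Gale--Ryser theorem; you simply spell out the details that the paper leaves as ``evidently.'' (One throwaway remark is off: $r=0$ is not equivalent to $q=n$, and $A$ is forced to be all ones only when $\ell=mn$; but this does not affect the argument.)
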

\begin{proof}
	Evidently, $l\leq mn$, so $q+1\leq n$ whenever $r>0$. Since we have $R^*=(m,\dots,m,r,0,\dots,0)$ where $m$ appears $q$ times, we can see that $S\prec R^*$. By Theorem \ref{Thm:Gale Ryser}, our desired result is obtained.
\end{proof}

Given a nonnegative integral vector $R=(r_1,\dots,r_m)$ where $\sum_{i=1}^m r_i=\ell$, we shall establish an analogous result for Lemma \ref{Lemma:evenS} by constructing a concrete matrix. Let $n$ be a positive integer such that $r_i\leq n$ for all $i=1,\dots, m$. Choose $q\geq 0$ so that $\ell=qn+r$ for some $0\leq r\leq n-1$. Let $r_0=0$, and let $A_{0}$ be the $m\times n(q+1)$ matrix such that if $r_i>0$, then the $i^{\text{th}}$ row of $A_0$ consists of $1$'s from the $(1+\sum_{j=1}^{i}r_{j-1})^{\text{th}}$ position to the $(\sum_{j=1}^{i}r_j)^{\text{th}}$ position and $0$'s elsewhere; if $r_i=0$, then the $i^{\text{th}}$ row of $A_0$ is a row of zeros. Then, $A_0$ can be partitioned into $(q+1)$ submatrices $A_1,\dots,A_{q+1}$ so that for $1\leq i\leq q+1$, $A_i$ is an $m\times n$ submatrix of $A_0$ whose columns are indexed by $(i-1)n+1,\dots,(i-1)n+n$.

Let $A=A_1+\cdots+A_{q+1}$. It is clear that the row sum vector of $A$ is $R$. Since ones in each row of $A_0$ appear consecutively and each row contains at most $n$ ones, $A$ must be a $(0,1)$ matrix. Furthermore, every column of $A_i$ for $i=1,\dots, q$ contains precisely a single one, and each of the first $r$ columns of $A_{q+1}$ contains a single one. Therefore, $A\in\mathcal{U}(R,S)$ where \begin{align}\label{Result:even R}
S=(\underbrace{q+1,\dots,q+1}_{r\;\text{times}},\underbrace{q,\dots,q}_{n-r\;\text{times}}).
\end{align}

\begin{example}\label{Example:construction for S*}
	Let $R=(3,3,0,2,3)$. Consider the described matrix $A_0$ above:
	$$
	A_0=\left[\begin{array}{cccc|cccc|cccc}
	1 & 1 & 1 & 0 & 0 & 0 & 0 & 0 & 0 & 0 & 0 & 0 \\
	0 & 0 & 0 & 1 & 1 & 1 & 0 & 0 & 0 & 0 & 0 & 0 \\
	0 & 0 & 0 & 0 & 0 & 0 & 0 & 0 & 0 & 0 & 0 & 0 \\
	0 & 0 & 0 & 0 & 0 & 0 & 1 & 1 & 0 & 0 & 0 & 0 \\
	0 & 0 & 0 & 0 & 0 & 0 & 0 & 0 & 1 & 1 & 1 & 0 
	\end{array}\right].
	$$
	As explained above, we obtain 
	$$
	A=\left[\begin{array}{cccc}
	1 & 1 & 1 & 0 \\
	1 & 1 & 0 & 1 \\
	0 & 0 & 0 & 0 \\
	0 & 0 & 1 & 1 \\
	1 & 1 & 1 & 0 
	\end{array}\right].
	$$
\end{example}

Continuing with the hypotheses and notation in Theorem \ref{Thm:Grammates Rank22}, assume that $E$ is a $(0,1,-1)$ matrix of the form \ref{index m5} and $A$ is a $(0,1)$ matrix of the form \eqref{matrix A rank 22}. Recall from Remark \ref{Remark:conditions for indices of M5} that given that the number of rows or columns of $X$ in $A$ is odd, if $E$ is realizable, then necessarily the sizes $X$, $Y$ and $Z$ in $A$ are proportional.

We now claim that the converse holds. In order to establish the claim, we shall construct $(0,1)$ matrices $X$, $Y$ and $Z$ satisfying \ref{rank22 c1} and \ref{rank22 c2} of Theorem \ref{Thm:Grammates Rank22}, under the condition that the sizes of $X$, $Y$ and $Z$ are in proportion in absence of the parities of their size. Since the sizes of $X$ and $Y$ are proportional, if the number of rows of $Y$ is bigger than that of rows of $X$, then the number of columns of $Y$ is bigger than that of columns of $X$, and vice versa. Hence, we may assume that $X$ has the smallest size. Then, it is enough to show that the existence of $X$ implies that of $Y$. Considering the last paragraph of Remark \ref{Remark:conditions for indices of M5}, we shall find $X$ and $Y$ with the condition that $x_1=y_2=e$, $x_2=y_1=-f$, $\beta_2=-\alpha_2=l$ and $\beta_1=-\alpha_1=-k$. Then, one of the desired matrices for $X$ can be obtained as $\begin{bmatrix}
J_{k,e} & 0\\
0 & J_{l,f}
\end{bmatrix}$.

If the size of $Y$ is the same as that of $X$, then we choose $Y=X$. Suppose $p+q>k+l$ and $c+d>e+f$. Since the sizes of $X$ and $Y$ are proportional, $\frac{c+d}{p+q}=\frac{e+f}{k+l}$. From $E\mathbf{1}=0$ and $\mathbf{1}^TE=0^T$, we have $q-p=k-l$ and $c-d=e-f$. We need to construct a $(0,1)$ matrix $Y$ such that $$Y\begin{bmatrix}
\mathbf{1}_c\\
-\mathbf{1}_d
\end{bmatrix}=\begin{bmatrix}
-f\mathbf{1}_p\\
e\mathbf{1}_q
\end{bmatrix}\;\text{and}\;Y^T\begin{bmatrix}
\mathbf{1}_p\\
-\mathbf{1}_q
\end{bmatrix}=\begin{bmatrix}
-k\mathbf{1}_c\\
l\mathbf{1}_d
\end{bmatrix}.$$
By Lemma \ref{Lemma:WLOG for contstruction M}, we may prove that there exists a $(0,1)$ matrix $Y_1$ such that $$Y_1\begin{bmatrix}
\mathbf{1}_c\\
-\mathbf{1}_d
\end{bmatrix}=\begin{bmatrix}
e\mathbf{1}_q\\
-f\mathbf{1}_p
\end{bmatrix}\;\text{and}\;Y_1^T\begin{bmatrix}
\mathbf{1}_q\\
-\mathbf{1}_p
\end{bmatrix}=\begin{bmatrix}
k\mathbf{1}_c\\
-l\mathbf{1}_d
\end{bmatrix}.$$

\begin{lemma}\label{Lemma:construction for X when a+b odd}
	Let $a_1,a_2,b_1,b_2>0$. Suppose that $m_1$, $m_2$, $n_1$ and $n_2$ are positive integers such that $m_1+m_2>a_1+a_2$, $n_1+n_2>b_1+b_2$, $m_1-m_2=a_1-a_2$, $n_1-n_2=b_1-b_2$ and $\frac{n_1+n_2}{m_1+m_2}=\frac{b_1+b_2}{a_1+a_2}$. Then, there exists a $(0,1)$ matrix $Y$ of size $(m_1+m_2)\times(n_1+n_2)$ such that 
	\begin{align*}
	Y\begin{bmatrix}
	\mathbf{1}_{n_1}\\-\mathbf{1}_{n_2}
	\end{bmatrix}=\begin{bmatrix}
	b_1\mathbf{1}_{m_1}\\-b_2\mathbf{1}_{m_2}\end{bmatrix},&\;Y^T\begin{bmatrix}
	\mathbf{1}_{m_1}\\-\mathbf{1}_{m_2}
	\end{bmatrix}=\begin{bmatrix}
	a_1\mathbf{1}_{n_1}\\-a_2\mathbf{1}_{n_2}\end{bmatrix}.
	\end{align*}
\end{lemma}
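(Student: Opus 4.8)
The plan is to split $Y$ into a $2\times 2$ block matrix $Y=\begin{bmatrix}Y_{11}&Y_{12}\\Y_{21}&Y_{22}\end{bmatrix}$ conformally with the partitions $m_1+m_2$ and $n_1+n_2$, so that $Y_{11}$ is $m_1\times n_1$ and $Y_{22}$ is $m_2\times n_2$, and to read off the two prescribed identities blockwise. The first identity becomes $Y_{11}\mathbf{1}_{n_1}-Y_{12}\mathbf{1}_{n_2}=b_1\mathbf{1}_{m_1}$ and $Y_{21}\mathbf{1}_{n_1}-Y_{22}\mathbf{1}_{n_2}=-b_2\mathbf{1}_{m_2}$, while the second ($Y^T$) identity becomes $Y_{11}^T\mathbf{1}_{m_1}-Y_{21}^T\mathbf{1}_{m_2}=a_1\mathbf{1}_{n_1}$ and $Y_{12}^T\mathbf{1}_{m_1}-Y_{22}^T\mathbf{1}_{m_2}=-a_2\mathbf{1}_{n_2}$. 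Thus the task is to produce four $(0,1)$ blocks meeting these \emph{signed} row/column sum constraints.

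The key device is a checkerboard complementation: I would set $\widehat{Y}=\begin{bmatrix}Y_{11}&J_{m_1,n_2}-Y_{12}\\J_{m_2,n_1}-Y_{21}&Y_{22}\end{bmatrix}$, complementing only the two off-diagonal blocks. Replacing $Y_{12}$ by $J-Y_{12}$ and $Y_{21}$ by $J-Y_{21}$ converts each minus sign into a plus sign together with an all-ones contribution; a direct computation then gives that the ordinary row sums of $\widehat{Y}$ are $b_1+n_2$ on the first $m_1$ rows and $n_1+b_2$ on the last $m_2$ rows, while the ordinary column sums are $a_1+m_2$ on the first $n_1$ columns and $m_1+a_2$ on the last $n_2$ columns. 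The crux of the argument is that the hypotheses $n_1-n_2=b_1-b_2$ and $m_1-m_2=a_1-a_2$ force $b_1+n_2=n_1+b_2=:r$ and $a_1+m_2=m_1+a_2=:s$, so $\widehat{Y}$ is required to be nothing more than a $(0,1)$ matrix of size $(m_1+m_2)\times(n_1+n_2)$ with \emph{constant} row sum $r$ and \emph{constant} column sum $s$. This collapses the problem from the full Gale--Ryser majorization (Theorem \ref{Thm:Gale Ryser}) down to the elementary regular case.

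It then remains to check the hypotheses of Lemma \ref{Lemma:existence of matrix with regular vectors} for $R=r\mathbf{1}$ and $S=s\mathbf{1}$. Writing $M=m_1+m_2$, $N=n_1+n_2$, $A=a_1+a_2$, $B=b_1+b_2$, I would note $r=\tfrac12(B+N)$ and $s=\tfrac12(A+M)$, so the balance condition $rM=sN$ is equivalent to $BM=AN$, that is, to the proportionality hypothesis $\tfrac{n_1+n_2}{m_1+m_2}=\tfrac{b_1+b_2}{a_1+a_2}$; and the size bounds $r\le N$ and $s\le M$ follow from the strict inequalities $B<N$ and $A<M$. Lemma \ref{Lemma:existence of matrix with regular vectors} then delivers a matrix $\widehat{Y}\in\mathcal{U}(R,S)$, and undoing the complementation, by restoring $Y_{12}=J_{m_1,n_2}-\widehat{Y}_{12}$ and $Y_{21}=J_{m_2,n_1}-\widehat{Y}_{21}$, produces the desired $Y$. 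The only genuine obstacle is spotting the off-diagonal complementation and verifying that the prescribed difference conditions make both sum vectors constant; once that is seen, the whole construction reduces to the single balance identity $BM=AN$ and the existence Lemma \ref{Lemma:existence of matrix with regular vectors}.
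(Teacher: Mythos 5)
Your proposal is correct, and it takes a genuinely different and noticeably shorter route than the paper. The paper sets $Y_{12}=0$, reduces (via transposition) to the case $m_1b_1\geq n_1a_1$, and then builds $Y_{11}$, $Y_{21}$, $Y_{22}$ sequentially by three separate applications of the Gale--Ryser theorem (Theorem \ref{Thm:Gale Ryser}) with auxiliary sum vectors $\widetilde S$, $R_{21}$, verifying each majorization and size bound by hand. Your off-diagonal complementation $\widehat Y_{12}=J-Y_{12}$, $\widehat Y_{21}=J-Y_{21}$ converts the signed conditions into ordinary line sums, and I have checked the arithmetic: the hypotheses $n_1-n_2=b_1-b_2$ and $m_1-m_2=a_1-a_2$ do force the row sums $b_1+n_2=n_1+b_2=r$ and column sums $a_1+m_2=m_1+a_2=s$ to be constant; the proportionality hypothesis is exactly $rM=sN$ with $M=m_1+m_2$, $N=n_1+n_2$; and the strict inequalities give $r\leq N$, $s\leq M$, so Lemma \ref{Lemma:existence of matrix with regular vectors} applies. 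The reverse substitution $Y_{12}=J-\widehat Y_{12}$, $Y_{21}=J-\widehat Y_{21}$ then recovers the four signed block identities, as a short computation confirms (e.g.\ $Y_{21}\mathbf 1_{n_1}-Y_{22}\mathbf 1_{n_2}=(n_1-r)\mathbf 1_{m_2}=-b_2\mathbf 1_{m_2}$). What the paper's argument buys is a fully explicit matrix with $Y_{12}=0$; what yours buys is the elimination of the case analysis and of the full majorization machinery, together with a cleaner structural statement: the admissible $Y$ are precisely the off-diagonal complements of $(0,1)$ matrices with constant row sum $r$ and constant column sum $s$. If you write this up, do include the one-line verification of both directions of the complementation identity and note that $r$ and $s$ are integers by definition ($r=b_1+n_2$, $s=a_1+m_2$); otherwise there is nothing missing.
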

\begin{proof}
	We shall construct a $(0,1)$ matrix $Y=\begin{bmatrix}
	Y_{11} & Y_{12}\\
	Y_{21} & Y_{22}
	\end{bmatrix}$ with our desired property where $Y_{11}$ and $Y_{22}$ are $m_1\times n_1$ and $m_2\times n_2$ matrices, respectively. Here, for $i,j\in\{1,2\}$, we denote the row sum and column sum vectors of $Y_{ij}$ by $R_{ij}$ and $S_{ij}$, respectively.
	
	Using $m_1-m_2=a_1-a_2$, $n_1-n_2=b_1-b_2$, we find from $m_1+m_2>a_1+a_2$ and $n_1+n_2>b_1+b_2$ that $m_1>a_1$, $m_2>a_2$, $n_1>b_1$ and $n_2>b_2$; and we see from $\frac{n_1+n_2}{m_1+m_2}=\frac{b_1+b_2}{a_1+a_2}$ that $m_1b_1+m_2b_2=n_1a_1+n_2a_2$. Suppose that $m_1b_1=n_1a_1$. Then, $m_2b_2=n_2a_2$. By Lemma \ref{Lemma:existence of matrix with regular vectors}, there exist $Y_{11}\in\cU(R_{11},S_{11})$ and $Y_{22}\in\cU(R_{22},S_{22})$ such that $R_{11}=b_1\mathbf{1}_{m_1}$, $S_{11}=a_1\mathbf{1}_{n_1}$, $R_{22}=b_2\mathbf{1}_{m_2}$ and $S_{22}=a_2\mathbf{1}_{n_2}$. Choosing $Y_{12}=0$ and $Y_{21}=0$, our desired matrix is obtained.
	
	Considering the transpose of $Y$, we may assume that $m_1b_1>n_1a_1$. Set $Y_{12}=0$. Let $\ell=m_1b_1-n_1a_1$. We first construct $Y_{11}$. Choose $q_1\geq 0$ such that $\ell=n_1q_1+r_1$ for some $0\leq r_1< n_1$. Let $\widetilde{S}=(q_1+1,\dots,q_1+1,q_1,\dots,q_1)$ where $q_1+1$ and $q_1$ appear $r_1$ times and $n_1-r_1$ times in $\widetilde{S}$, respectively. Let $S_{11}=a_1\mathbf{1}_{n_1}+\widetilde{S}$ and $R_{11}=b_1\mathbf{1}_{m_1}$. Since $\ell=m_1b_1-n_1a_1$, the sum of the entries of $S_{11}$ is the same as the sum for $R_{11}$. We shall show that each entry in $S_{11}$ is not greater than $m_1$. From $n_1>b_1$, we have
	\begin{align*}
	m_1-(a_1+q_1)&=m_1-\frac{n_1a_1+\ell-r_1}{n_1}\\
	&=m_1-\frac{m_1b_1-r_1}{n_1}=\frac{m_1(n_1-b_1)+r_1}{n_1}>0.
	\end{align*}
	So, $m_1-(q_1+1+a_1)\geq 0$. Furthermore, $R_{11}^*\succ S_{11}$. By Theorem \ref{Thm:Gale Ryser}, there exists a matrix $Y_{11}\in\cU(R_{11},S_{11})$. 
	
	Now, we shall construct $Y_{21}$ with a column sum vector as $\widetilde{S}$. Then, we need to show $m_2\geq q_1+1$. From $m_1-m_2=a_1-a_2$ and $n_1>b_1$, we have
	\begin{align*}
	m_2-q_1&=\frac{m_2n_1-\ell+r_1}{n_1}\\
	&=\frac{m_2n_1-m_1b_1+n_1a_1+r_1}{n_1}=\frac{m_1(n_1-b_1)+n_1a_2+r_1}{n_1}>0.
	\end{align*}
	So, $m_2\geq q_1+1$. Choose $q_2\geq 0$ such that $\ell=m_2q_2+r_2$ for some $0\leq r_2< m_2$. Applying Lemma \ref{Lemma:evenS} with $\widetilde{S}$, there exists $Y_{21}\in\cU(R_{21},S_{21})$ such that $S_{21}=\widetilde{S}$ and $R_{21}=(q_2+1,\dots,q_2+1,q_2,\dots,q_2)$ where $q_2+1$ and $q_2$ appear $r_2$ times and $m_2-r_2$ times in $R_{21}$, respectively. 
	
	Let $R_{22}=b_2\mathbf{1}_{m_2}+R_{21}$ and $S_{22}=a_2\mathbf{1}_{n_2}$. Then, the sum of the entries of $R_{22}$ is equal to the sum for $S_{22}$. Using $\ell=m_1b_1-n_1a_1$, $m_1b_1+m_2b_2=n_1a_1+n_2a_2$ and $m_2>a_2$, we obtain
	\begin{align*}
	n_2-(b_2+q_2)=n_2-\frac{m_2b_2+\ell-r_2}{m_2}=\frac{n_2(m_2-a_2)+r_2}{m_2}>0.
	\end{align*}
	Then, $n_2-(q_2+1+b_2)\geq 0$. Moreover, $S_{22}^*\succ R_{22}$. By Theorem \ref{Thm:Gale Ryser}, there exists $Y_{22}\in\cU(R_{22},S_{22})$. Therefore, the conclusion follows.
\end{proof}

\begin{example}
	Let $a_1=4$, $a_2=6$, $b_1=5$, $b_2=3$, $m_1=9$, $m_2=11$, $n_1=9$ and $n_2=7$. One can check that the indices satisfy the hypothesis of Lemma \ref{Lemma:construction for X when a+b odd}. We use the results and notation in the proof of Lemma \ref{Lemma:construction for X when a+b odd}. Then, $m_1b_1-n_1a_1=9$ and so $\widetilde{S}=\mathbf{1}_9$. It can be found that we have $Y_{11}\in\cU(R_{11},S_{11})$ where $R_{11}=5\mathbf{1}_9$ and $S_{11}=4\mathbf{1}_9+\widetilde{S}$; $Y_{21}\in\cU(R_{21},S_{21})$ where $R_{21}^T=(\mathbf{1}_9^T,0,0)$ and $S_{21}=\widetilde{S}$; and $Y_{22}\in\cU(R_{22},S_{22})$ where $R_{22}=R_{21}+3\mathbf{1}_{11}$ and $S_{22}=6\mathbf{1}_7$. As in the construction described in Example \ref{Example:construction for S*}, we can obtain $Y_{21}=\begin{bmatrix}
	I_9 \\
	0
	\end{bmatrix}$. (One can obtain $Y_{11}$ and $Y_{22}$ by using the process illustrated in \cite{Ryser:zeros-ones}.)
\end{example}

Applying Lemmas \ref{Lemma:construction for X when a+b even} and \ref{Lemma:construction for X when a+b odd}, the following result is established.

\begin{theorem}\label{Thm:realizable E for rank22}
	Let $E$ be a $(0,1,-1)$ matrix of the form \ref{index m5}. Then, $E$ is realizable if and only if one of the following conditions is satisfied:
	\begin{enumerate}[label=(\roman*)]
		\item $a+b$, $c+d$, $e+f$, $k+l$, $p+q$ and $r+s$ are all even, and
		\item three numbers $a+b$, $c+d$ and $e+f$ are all odd or three numbers $k+l$, $p+q$ and $r+s$ are all odd; and $\frac{e+f}{k+l}=\frac{c+d}{p+q}=\frac{a+b}{r+s}$.
	\end{enumerate}
\end{theorem}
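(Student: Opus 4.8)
The plan is to read off the realizability of $E$ from Theorem~\ref{Thm:Grammates Rank22}: a matrix $A$ of the form \eqref{matrix A rank 22} and $A+E$ are Gram mates precisely when there exist $(0,1)$ matrices $X$, $Y$, $Z$ of sizes $(k+l)\times(e+f)$, $(p+q)\times(c+d)$ and $(r+s)\times(a+b)$ satisfying \ref{rank22 c1} and \ref{rank22 c2}. By Remark~\ref{Remark:conditions for indices of M5} every integer parameter there is determined by a single pair $(\alpha_1,x_1)$ solving the linear system \eqref{temp;linearsystem}, so realizability of $E$ is equivalent to the existence of an integer solution $(\alpha_1,x_1)$ whose prescribed margins can actually be met by $(0,1)$ matrices. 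Throughout I will use the parity facts of Remark~\ref{Remark:sizes for M5}, namely that the column sums $a+b,c+d,e+f$ share one parity and the row sums $k+l,p+q,r+s$ share another, together with the consequences $a-b=d-c=f-e$ and $k-l=q-p=s-r$ of $E\mathbf{1}=0$ and $\mathbf{1}^TE=0^T$.

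For necessity, suppose $E$ is realizable. If both triples are even we are in condition (i) and are done. Otherwise $k-l$ or $e-f$ is odd. Remark~\ref{Remark:conditions for indices of M5} shows that $(\tfrac{k-l}{2},\tfrac{e-f}{2})$ always solves \eqref{temp;linearsystem}, and that whenever two of $X,Y,Z$ have non-proportional sizes this is the \emph{only} solution; an odd $k-l$ or $e-f$ would then leave no integral solution, contradicting realizability. Hence the sizes of $X$, $Y$, $Z$ must be proportional, $\frac{e+f}{k+l}=\frac{c+d}{p+q}=\frac{a+b}{r+s}$, and since at least one triple is odd we land in condition (ii).

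For sufficiency I split into the two cases. In case (i) all six sums are even, so $(\alpha_1,x_1)=(\tfrac{k-l}{2},\tfrac{e-f}{2})$ is integral and forces $\alpha_1=\alpha_2$, $x_1=x_2$; the resulting margins on $X$, $Y$, $Z$ are exactly the symmetric ones produced by Lemma~\ref{Lemma:construction for X when a+b even}, applied with $(m_1,m_2,n_1,n_2)$ equal to $(k,l,e,f)$, $(p,q,c,d)$ and $(r,s,a,b)$ respectively (here one uses $c-d=e-f$, $q-p=l-k$, $a-b=f-e$, $r-s=l-k$ to match the scalars). Each block then satisfies both its row- and column-sum condition, so \ref{rank22 c1} and \ref{rank22 c2} hold and $A$, $A+E$ are Gram mates. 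In case (ii) I instead use the integral solution $(\alpha_1,x_1)=(k,e)$ singled out at the end of Remark~\ref{Remark:conditions for indices of M5}, which gives $x_1=y_2=-z_2=e$, $x_2=y_1=-z_1=-f$, $\beta_2=-\alpha_2=\gamma_1=l$ and $\beta_1=-\alpha_1=\gamma_2=-k$. Assuming without loss of generality (via the block reshuffling of Lemma~\ref{Lemma:WLOG for contstruction M}) that $X$ has the smallest size, I take $X=\begin{bmatrix} J_{k,e} & 0\\ 0 & J_{l,f}\end{bmatrix}$ and build the weakly larger blocks $Y$ and $Z$ with Lemma~\ref{Lemma:construction for X when a+b odd}, whose hypotheses $m_1-m_2=a_1-a_2$, $n_1-n_2=b_1-b_2$ and $\frac{n_1+n_2}{m_1+m_2}=\frac{b_1+b_2}{a_1+a_2}$ are furnished by $q-p=k-l$, $c-d=e-f$ and the case-(ii) ratio equality. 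Again \ref{rank22 c1} and \ref{rank22 c2} hold, so $E$ is realizable.

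The hard part will be the constructive half of case (ii): producing honest $(0,1)$ matrices $Y$ and $Z$ with the asymmetric margins $[-f\mathbf{1};e\mathbf{1}]$ and $[-k\mathbf{1};l\mathbf{1}]$ once their sizes exceed that of $X$. This is precisely where the Gale--Ryser machinery (Theorem~\ref{Thm:Gale Ryser} and the building blocks Lemmas~\ref{Lemma:evenS} and~\ref{Lemma:construction for X when a+b odd}) is needed, and the majorization inequalities and nonnegativity/size bounds in that construction must be checked carefully. A secondary nuisance is the bookkeeping when some of $k,l,e,f,\dots$ vanish, so that a block of $A$ collapses and the constructing lemmas do not apply verbatim; these degenerate configurations are absorbed by the reductions in Remark~\ref{Remark:rank 22, zero cases} together with the explicit choice of $X$, and do not disturb the logical skeleton above.
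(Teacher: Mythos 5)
Your proposal is correct and follows essentially the same route as the paper: necessity via the uniqueness of the solution $(\alpha_1,x_1)=(\tfrac{k-l}{2},\tfrac{e-f}{2})$ to the linear system \eqref{temp;linearsystem} when the sizes are not proportional (Remark \ref{Remark:conditions for indices of M5}), and sufficiency by constructing $X$, $Y$, $Z$ with Lemma \ref{Lemma:construction for X when a+b even} in the even case and with the particular solution $(\alpha_1,x_1)=(k,e)$, the block $\begin{bmatrix} J_{k,e} & 0\\ 0 & J_{l,f}\end{bmatrix}$, and Lemma \ref{Lemma:construction for X when a+b odd} in the proportional odd case. The only blemish is a harmless sign slip ($\mathbf{1}^TE=0^T$ gives $q-p=k-l$, not $l-k$), which does not affect the scalar matching you describe.
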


Assume that $A$ and $A+E$ are Gram mates via a $(0,1,-1)$ matrix $E$ of the form \ref{index m5}. We use the results and notation in Theorem \ref{Thm:Grammates Rank22}. Consider $$2A+E=\begin{bmatrix}
J_{k,a} & J_{k,b}  & J_{k,c} & J_{k,d} & 2X_{11} & 2X_{12} \\
J_{l,a} & J_{l,b} & J_{l,c} & J_{l,d} & 2X_{21} & 2X_{22} \\
J_{p,a} & J_{p,b}  & 2Y_{11} & 2Y_{12} & J_{p,e} & J_{p,f} \\
J_{q,a} & J_{q,b} & 2Y_{21} & 2Y_{22} & J_{q,e} & J_{q,f} \\
2Z_{11} & 2Z_{12} & J_{r,c} & J_{r,d} & J_{r,e} & J_{r,f}  \\
2Z_{21} & 2Z_{22} & J_{s,c} & J_{s,d} & J_{s,e} & J_{s,f} \\
\end{bmatrix}.$$ 
Let 
\begin{align}\label{temp13}
\begin{split}
\bx_1^T&=\left[\begin{array}{cccccc}
\mathbf{1}_{a}^T  & -\mathbf{1}_{b}^T & \mathbf{1}_{c}^T & -\mathbf{1}_{d}^T & \mathbf{0}_{e}^T & \mathbf{0}_{f}^T \end{array}\right],\\
\bx_2^T&=\left[\begin{array}{cccccc}
\mathbf{1}_{a}^T  & -\mathbf{1}_{b}^T & \mathbf{0}_{c}^T & \mathbf{0}_{d}^T & \mathbf{1}_{e}^T & -\mathbf{1}_{f}^T \end{array}\right].
\end{split}
\end{align}
From $E\mathbf{1}=0$, we have $b-a=c-d=e-f$. So,
\begin{align*}
(2A+E)\bx_1=\begin{bmatrix}
0\\ 0 \\ (f-e+2y_1)\mathbf{1}_p \\ (f-e+2y_2)\mathbf{1}_q \\ (2z_1+e-f)\mathbf{1}_r \\ (2z_2+e-f)\mathbf{1}_s
\end{bmatrix},\;
(2A+E)\bx_2=\begin{bmatrix}
(f-e+2x_1)\mathbf{1}_k \\ (f-e+2x_2)\mathbf{1}_l \\ 0 \\ 0\\ (2z_1+e-f)\mathbf{1}_r \\ (2z_2+e-f)\mathbf{1}_s
\end{bmatrix}.
\end{align*}
Since $A$ and $A+E$ are Gram mates, the condition \ref{rank22 c1} in Theorem \ref{Thm:Grammates Rank22} holds, so we have $x_1=y_2=-z_2$, $x_2=y_1=-z_1$ and $x_1+x_2=y_1+y_2=-(z_1+z_2)=e-f$. Note that $\bx_1$ and $\bx_2$ form a basis of $\mathrm{Row}(E)$. Hence, $(2A+E)E^T=0$ if and only if $x_1=\frac{e-f}{2}$. Therefore, by Theorem \ref{Thm:equivalent conditions}, $A$ and $A+E$ are convertible if and only if $x_1=\frac{e-f}{2}$.

Suppose that $x_1=\frac{e-f}{2}$. Since $(\alpha_1,x_1)$ is a solution to the system \eqref{temp;linearsystem} in Remark \ref{Remark:conditions for indices of M5}, we have $\alpha_1=\frac{k-l}{2}$. Hence, from $E\mathbf{1}=0$, $\mathbf{1}^TE=0^T$ and the conditions \ref{rank22 c1} and \ref{rank22 c2} of Theorem \ref{Thm:Grammates Rank22}, for $i=1,2$, $x_i=y_i=-z_i=\frac{e-f}{2}=\frac{c-d}{2}=\frac{b-a}{2}$ and $\gamma_i=\beta_i=-\alpha_i=\frac{l-k}{2}=\frac{p-q}{2}=\frac{r-s}{2}$. We now find the Gram singular values of $A$ and $A+E$ and the corresponding right singular vectors. It can be computed that
\begin{align*}
A^TA\begin{bmatrix}
\mathbf{1}_a\\-\mathbf{1}_b\\\mathbf{1}_c\\-\mathbf{1}_d\\ 0 \\ 0
\end{bmatrix}=\begin{bmatrix}
(l(a+c)+q(a+y_2))\mathbf{1}_a+(z_1-d)Z_{11}^T\mathbf{1}_r+(z_2+c)Z_{21}^T\mathbf{1}_s\\
(k(-b-d)+p(-b+y_1))\mathbf{1}_b+(z_1-d)Z_{12}^T\mathbf{1}_r+(z_2+c)Z_{22}^T\mathbf{1}_s\\
(l(a+c)+s(z_2+c))\mathbf{1}_c+(-b+y_1)Y_{11}^T\mathbf{1}_p+(a+y_2)Y_{21}^T\mathbf{1}_q\\
(k(-b-d)+r(z_1-d))\mathbf{1}_d+(-b+y_1)Y_{12}^T\mathbf{1}_p+(a+y_2)Y_{22}^T\mathbf{1}_q\\
(q(a+y_2)+r(z_1-d))\mathbf{1}_e+(-b-d)X_{11}^T\mathbf{1}_k+(a+c)X_{21}^T\mathbf{1}_l\\
(p(-b+y_1)+s(z_2+c))\mathbf{1}_f+(-b-d)X_{21}^T\mathbf{1}_k+(a+c)X_{22}^T\mathbf{1}_l
\end{bmatrix}.
\end{align*}
Substituting $\frac{d-c}{2}$ into $z_1$ and $z_2$ in the first entry of the right side, we have $-\frac{1}{2}(c+d)Z_{11}^T\mathbf{1}_r+\frac{1}{2}(c+d)Z_{21}^T\mathbf{1}_s=-\gamma_1(c+d)\mathbf{1}_a$. So, from $y_2=\frac{b-a}{2}$, we find $(l(a+c)+\frac{1}{2}q(a+b)+\gamma_1(c+d))\mathbf{1}_a$ in the first entry. Applying a similar argument for the remaining entries of the right side, it follows that
$$A^TA\begin{bmatrix}
\mathbf{1}_a\\-\mathbf{1}_b\\\mathbf{1}_c\\-\mathbf{1}_d\\ 0 \\ 0
\end{bmatrix}=\begin{bmatrix}
(l(a+c)+\frac{1}{2}q(a+b)-\frac{1}{2}\gamma_1(c+d))\mathbf{1}_a\\(-k(a+c)-\frac{1}{2}p(a+b)-\frac{1}{2}\gamma_2(c+d))\mathbf{1}_b\\ (l(a+c)+\frac{1}{2}s(c+d)-\frac{1}{2}\beta_1(a+b))\mathbf{1}_c\\(-k(a+c)-\frac{1}{2}r(c+d)-\frac{1}{2}\beta_2(a+b))\mathbf{1}_d\\ (\frac{1}{2}q(a+b)-\frac{1}{2}r(c+d)-\alpha_1(b+d))\mathbf{1}_e\\(-\frac{1}{2}p(a+b)+\frac{1}{2}s(c+d)-\alpha_2(b+d))\mathbf{1}_f
\end{bmatrix}.
$$
Similarly, one can verify that
\begin{align*}
A^TA\begin{bmatrix}
\mathbf{1}_a\\-\mathbf{1}_b\\0 \\0 \\\mathbf{1}_e\\-\mathbf{1}_f
\end{bmatrix}
=\begin{bmatrix}
(\frac{1}{2}l(a+b)+q(a+e)+\frac{1}{2}\gamma_1(e+f))\mathbf{1}_a\\(-\frac{1}{2}k(a+b)-p(a+e)+\frac{1}{2}\gamma_2(e+f)\mathbf{1}_b\\ (\frac{1}{2}l(a+b)-\frac{1}{2}s(e+f)-\beta_1(b+f))\mathbf{1}_c\\(-\frac{1}{2}k(a+b)+\frac{1}{2}r(e+f)-\beta_2(b+f))\mathbf{1}_d\\ (q(a+e)+\frac{1}{2}r(e+f)-\frac{1}{2}\alpha_1(a+b))\mathbf{1}_e\\(-p(a+e)-\frac{1}{2}s(e+f)-\frac{1}{2}\alpha_2(a+b))\mathbf{1}_f
\end{bmatrix}.
\end{align*}

Consider an equation $A^TA(\zeta_1\bx_1+\zeta_2\bx_2)=\lambda(\zeta_1\bx_1+\zeta_2\bx_2)$ where $\zeta_1$, $\zeta_2$ and $\lambda$ are real numbers. Using  $\gamma_i=\beta_i=\alpha_i=\frac{l-k}{2}=\frac{p-q}{2}=\frac{r-s}{2}$, it can be checked that the equations from the first, third and fifth row blocks are identical with those from the second, fourth and sixth row blocks, respectively. Moreover, we can find that the equation from the first row block is the same as the addition of two equations from the third and fifth row blocks. Hence, $A^TA(\zeta_1\bx_1+\zeta_2\bx_2)=\lambda(\zeta_1\bx_1+\zeta_2\bx_2)$ is equivalent to a linear system of two equations from the third and fifth row blocks:

\footnotesize\begin{align*}
&\zeta_1\left(l(a+c)+\frac{1}{2}s(c+d)-\frac{1}{2}\beta_1(a+b)\right)+\zeta_2\left(\frac{1}{2}l(a+b)-\frac{1}{2}s(e+f)-\beta_1(b+f)\right)=\lambda \zeta_1,\\
&\zeta_1\left(\frac{1}{2}q(a+b)-\frac{1}{2}r(c+d)-\alpha_1(b+d)\right)+\zeta_2\left(q(a+e)+\frac{1}{2}r(e+f)-\frac{1}{2}\alpha_1(a+b)\right)=\lambda \zeta_2.
\end{align*}\normalsize
Therefore, we have the following result.
\begin{theorem}\label{Thm:Gramsingular rank 22}
	Let $E$ be a realizable matrix of form \ref{index m5}, and let
	\begin{align*}
	A=\begin{bmatrix}
	0 & J_{k,b}  & 0 & J_{k,d} & X_{11} & X_{12} \\
	J_{l,a} & 0 & J_{l,c} & 0 & X_{21} & X_{22} \\
	0 & J_{p,b}  & Y_{11} & Y_{12} & 0 & J_{p,f} \\
	J_{q,a} & 0 & Y_{21} & Y_{22} & J_{q,e} & 0 \\
	Z_{11} & Z_{12} & 0 & J_{r,d} & J_{r,e} & 0  \\
	Z_{21} & Z_{22} & J_{s,c} & 0 & 0 & J_{s,f} \\
	\end{bmatrix}
	\end{align*}
	be a $(0,1)$ matrix conformally partitioned with $E$. Suppose that $A$ and $A+E$ are Gram mates. Then, $A$ and $A+E$ are convertible if and only if one of the following hold: \begin{enumerate*}[label=(\roman*)]
		\item $(X_{i1}-X_{i2})\mathbf{1}=\frac{e-f}{2}\mathbf{1}$ for $i=1,2$; \item $(X^T_{i1}-X^T_{i2})\mathbf{1}=\frac{k-l}{2}\mathbf{1}$ for $i=1,2$; \item $(Y_{i1}-Y_{i2})\mathbf{1}=\frac{c-d}{2}\mathbf{1}$ for $i=1,2$; \item $(Y^T_{i1}-Y^T_{i2})\mathbf{1}=\frac{p-q}{2}\mathbf{1}$ for $i=1,2$; \item $(Z_{i1}-Z_{i2})\mathbf{1}=\frac{a-b}{2}\mathbf{1}$ for $i=1,2$; and \item $(Z^T_{i1}-Z^T_{i2})\mathbf{1}=\frac{r-s}{2}\mathbf{1}$ for $i=1,2$.
	\end{enumerate*} 
	Furthermore, if $A$ is convertible to $A+E$, then the Gram singular values of $A$ and $A+E$ are the square roots of the eigenvalues $\lambda$ of $M$ where 
	$$
	M=\footnotesize\begin{bmatrix}
	l(a+c)+\frac{1}{2}s(c+d)+\frac{1}{4}(k-l)(a+b) & \frac{1}{2}l(a+b)-\frac{1}{2}s(e+f)+\frac{1}{2}(k-l)(a+e)\\
	\frac{1}{2}q(a+b)-\frac{1}{2}r(c+d)+\frac{1}{2}(p-q)(a+c) & q(a+e)+\frac{1}{2}r(e+f)+\frac{1}{4}(p-q)(a+b)
	\end{bmatrix}.
	$$\normalsize
	Further, a right singular vector associated to $\sqrt{\lambda}$ is a normalized vector of $\zeta_1\bx_1+\zeta_2\bx_2$, where $\bx_1$ and $\bx_2$ are the vectors in \eqref{temp13} and $(\zeta_1,\zeta_2)$ is an eigenvector of $M$ associated to $\lambda$.
\end{theorem}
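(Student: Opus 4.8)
\emph{The plan is to reduce both assertions to facts already assembled in the discussion immediately preceding the statement.} For the equivalence of convertibility with the six displayed conditions (i)--(vi), I would first recall that, because $A$ and $A+E$ are Gram mates, condition \ref{rank22 c1} of Theorem \ref{Thm:Grammates Rank22} holds; in particular $x_1=y_2=-z_2$, $x_2=y_1=-z_1$ and $x_1+x_2=y_1+y_2=-(z_1+z_2)=e-f$. Since $\bx_1,\bx_2$ of \eqref{temp13} form a basis of $\mathrm{Row}(E)$ and the computation of $(2A+E)\bx_1$, $(2A+E)\bx_2$ shows that $(2A+E)E^T=0$ exactly when $x_1=\tfrac{e-f}{2}$, Theorem \ref{Thm:equivalent conditions} gives that $A$ is convertible to $A+E$ if and only if $x_1=\tfrac{e-f}{2}$. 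Thus it suffices to check that each of (i)--(vi) is equivalent to $x_1=\tfrac{e-f}{2}$.

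Next I would dispatch these equivalences purely algebraically. Conditions (i), (iii), (v) read $x_i=\tfrac{e-f}{2}$, $y_i=\tfrac{c-d}{2}$, $z_i=\tfrac{a-b}{2}$; using $x_1+x_2=e-f$ together with the identities $e-f=c-d=b-a$ coming from $E\mathbf 1=0$ and the links $x_1=y_2=-z_2$, $x_2=y_1=-z_1$, each of these collapses to the single scalar equation $x_1=\tfrac{e-f}{2}$. The transpose conditions (ii), (iv), (vi) read $\alpha_i=\tfrac{k-l}{2}$, $\beta_i=\tfrac{p-q}{2}$, $\gamma_i=\tfrac{r-s}{2}$; here I would invoke $\mathbf 1^TE=0^T$ (giving $k-l=q-p=s-r$) and the relations $\gamma_1=\beta_2=-\alpha_2$, $\gamma_2=\beta_1=-\alpha_1$ with $\alpha_1+\alpha_2=k-l$ to reduce all three to $\alpha_1=\tfrac{k-l}{2}$. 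The one genuinely non-formal link is between the row-sum datum $x_1$ and the column-sum datum $\alpha_1$: for this I would use the first equation of the linear system \eqref{temp;linearsystem} in Remark \ref{Remark:conditions for indices of M5}, which (since $e+f>0$ and $k+l>0$) yields $x_1=\tfrac{e-f}{2}\iff\alpha_1=\tfrac{k-l}{2}$. This closes the chain of equivalences.

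For the second assertion I would assume convertibility, so all parameters simplify to $x_i=y_i=-z_i=\tfrac{e-f}{2}$ and $-\alpha_i=\beta_i=\gamma_i=\tfrac{l-k}{2}$. Feeding these into the already-computed expressions for $A^TA\bx_1$ and $A^TA\bx_2$ shows that $\mathrm{span}\{\bx_1,\bx_2\}=\mathrm{Row}(E)$ is invariant under $A^TA$ and that, in the ordered basis $\{\bx_1,\bx_2\}$, the restriction of $A^TA$ is represented by the matrix $M$ of the statement; concretely, the equation $A^TA(\zeta_1\bx_1+\zeta_2\bx_2)=\lambda(\zeta_1\bx_1+\zeta_2\bx_2)$ becomes $M(\zeta_1,\zeta_2)^T=\lambda(\zeta_1,\zeta_2)^T$. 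Finally, to identify these eigendata with the Gram singular data, I would apply condition \ref{d3} of Corollary \ref{Cor:equivalent conditions for convertible}: the right singular vectors attached to the Gram singular values form a basis of $\mathrm{Row}(A-B)=\mathrm{Row}(E)$, hence lie in $\mathrm{span}\{\bx_1,\bx_2\}$ and are precisely the normalized $\zeta_1\bx_1+\zeta_2\bx_2$, with Gram singular value $\sqrt\lambda$.

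I expect the main obstacle to be bookkeeping rather than conceptual. The delicate step is verifying that the six blocks of the eigenvalue system genuinely collapse to the two equations defining $M$ --- that after the substitutions the first, third and fifth block-equations coincide with the second, fourth and sixth, and that the first is the sum of the third and fifth --- so that the entries of $M$ emerge exactly as stated. Getting these coefficients right, in particular tracking the many factors of $\tfrac12$ produced by replacing $\alpha_i,\beta_i,\gamma_i$ by $\tfrac{l-k}{2}$ and $x_i,y_i,z_i$ by $\pm\tfrac{e-f}{2}$, is where care is needed; everything else is a direct appeal to the preceding lemmas and computations.
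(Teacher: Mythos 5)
Your proposal is correct and follows essentially the same route as the paper: establish $(2A+E)E^T=0\iff x_1=\tfrac{e-f}{2}$ via the computation of $(2A+E)\bx_i$ and Theorem \ref{Thm:equivalent conditions}, then under convertibility substitute $x_i=y_i=-z_i=\tfrac{e-f}{2}$, $-\alpha_i=\beta_i=\gamma_i=\tfrac{l-k}{2}$ into $A^TA\bx_1$, $A^TA\bx_2$ and collapse the six block equations to the $2\times 2$ system defining $M$, identifying the eigendata with the Gram singular data through Corollary \ref{Cor:equivalent conditions for convertible}\ref{d3}. Your explicit use of the first equation of \eqref{temp;linearsystem} (with $e+f>0$, $k+l>0$) to pass between $x_1=\tfrac{e-f}{2}$ and $\alpha_1=\tfrac{k-l}{2}$ is a slightly more careful rendering of the paper's appeal to Remark \ref{Remark:conditions for indices of M5}, but it is the same argument.
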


\section{Construction of Gram mates}\label{section:construction}
In this section, using given pairs of Gram mates, we provide several tools for constructing other pairs. Similarly, with given realizable matrices, we construct others; and we discuss realizable matrices of rank more than $2$.

All propositions in this section can be proved by directly verifying Definition \ref{Def:GramMates}. Hence, we introduce those propositions without proofs.

\begin{proposition}\label{construction:1}
	Let $A$ and $B$ be Gram mates. Then, $J-A$ and $J-B$ are Gram mates.
\end{proposition}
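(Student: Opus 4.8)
The plan is to verify Definition \ref{Def:GramMates} directly for the pair $J-A$ and $J-B$. First I would note that since $A$ and $B$ are $(0,1)$ matrices, so are $J-A$ and $J-B$ (their entries are $1-a_{ij}\in\{0,1\}$), and that $A\neq B$ immediately gives $J-A\neq J-B$. The whole argument then rests on the basic observation recorded in the preliminaries that Gram mates have identical row sum and column sum vectors, i.e. $A\mathbf{1}=B\mathbf{1}$ and $\mathbf{1}^TA=\mathbf{1}^TB$.

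For the first Gram condition I would expand $(J-A)(J-A)^T = JJ^T - JA^T - AJ^T + AA^T$ and compare it with the analogous expansion for $B$. The terms $JJ^T$ agree trivially, and $AA^T=BB^T$ holds by hypothesis, so it suffices to match the two cross terms. Writing $J=\mathbf{1}\mathbf{1}^T$ I would use $JA^T=\mathbf{1}(A\mathbf{1})^T$ and $AJ^T=(A\mathbf{1})\mathbf{1}^T$, and then invoke the row-sum equality $A\mathbf{1}=B\mathbf{1}$ to conclude $JA^T=JB^T$ and $AJ^T=BJ^T$. This yields $(J-A)(J-A)^T=(J-B)(J-B)^T$.

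For the second Gram condition I would run the symmetric computation: expand $(J-A)^T(J-A) = J^TJ - J^TA - A^TJ + A^TA$, note that $J^TJ$ agrees and $A^TA=B^TB$, and handle the cross terms via $J^TA=\mathbf{1}(\mathbf{1}^TA)$ and $A^TJ=(\mathbf{1}^TA)^T\mathbf{1}^T$, this time using the column-sum equality $\mathbf{1}^TA=\mathbf{1}^TB$. This gives $(J-A)^T(J-A)=(J-B)^T(J-B)$, completing the verification that $J-A$ and $J-B$ are Gram mates.

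There is no genuine obstacle here; the only point requiring care is keeping track of the sizes of the various all-ones matrices (with $A$ of size $m\times n$, the relevant $J$ is $J_{m,n}$, so that $JJ^T=nJ_m$ while $J^TJ=mJ_n$), and recognizing that the apparently troublesome cross terms collapse precisely because of the shared margin vectors. The equality of row and column sums is exactly the structural fact that makes complementation by $J$ preserve the Gram-mate relation.
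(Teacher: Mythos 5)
Your proof is correct and is exactly the argument the paper intends: the paper omits proofs in Section 5, stating only that each proposition follows by directly verifying Definition 2.1, and your expansion of $(J-A)(J-A)^T$ and $(J-A)^T(J-A)$ using the shared row and column sum vectors of Gram mates (together with $AA^T=BB^T$, $A^TA=B^TB$, and $A\neq B$) is precisely that direct verification; compare the nearly identical computation in the paper's Lemma 4.10.
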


\begin{proposition}\label{construction:2}
	Suppose that $(A_1,B_1)$ and $(A_2,B_2)$ are pairs of Gram mates (that are not necessarily square matrices). Then, we have the following pairs of Gram mates: 
	$$\left(\begin{bmatrix}
	A_1 & 0\\
	0 & A_2
	\end{bmatrix},\begin{bmatrix}
	B_1 & 0\\
	0 & B_2
	\end{bmatrix}\right)\;\text{and}\;\left(\begin{bmatrix}
	A_1 & J\\
	J & A_2
	\end{bmatrix},\begin{bmatrix}
	B_1 & J\\
	J & B_2
	\end{bmatrix}\right).$$
\end{proposition}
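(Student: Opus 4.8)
The plan is to verify Definition~\ref{Def:GramMates} directly in each case, relying on block matrix multiplication together with the basic observation recorded in Section~\ref{Subsec:preli Gram} that Gram mates share the same row sum and column sum vectors. Write $A_i, B_i$ as $m_i \times n_i$ matrices for $i=1,2$; since $(A_i, B_i)$ are Gram mates, $A_i$ and $B_i$ have the same size, so in each pair both block matrices are conformally partitioned and the inequality $A\neq B$ follows at once from $A_1\neq B_1$.

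For the block-diagonal pair, set $A=\mathrm{diag}(A_1,A_2)$ and $B=\mathrm{diag}(B_1,B_2)$. Then $AA^T=\mathrm{diag}(A_1A_1^T,A_2A_2^T)$ and $BB^T=\mathrm{diag}(B_1B_1^T,B_2B_2^T)$, which coincide because $A_iA_i^T=B_iB_i^T$ for $i=1,2$; the identity $A^TA=B^TB$ is obtained in exactly the same way. This case needs nothing beyond the defining hypotheses.

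For the second pair, the diagonal blocks of $AA^T$ are $A_1A_1^T+JJ^T$ and $A_2A_2^T+JJ^T$, and since the all-ones off-diagonal blocks are identical in $A$ and $B$, these match the corresponding blocks of $BB^T$ via $A_iA_i^T=B_iB_i^T$. The step that requires the extra observation is the off-diagonal block: a short computation gives the $(1,2)$ block of $AA^T$ as $(A_1\mathbf{1})\mathbf{1}^T+\mathbf{1}(A_2\mathbf{1})^T$, so it depends on $A_1$ and $A_2$ only through their row sum vectors. Because Gram mates have equal row sum vectors, $A_1\mathbf{1}=B_1\mathbf{1}$ and $A_2\mathbf{1}=B_2\mathbf{1}$, whence this block equals the corresponding block of $BB^T$; the $(2,1)$ block is its transpose. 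Thus $AA^T=BB^T$, and the analogous argument using column sum vectors (which Gram mates also share) yields $A^TA=B^TB$.

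The main, and essentially only, subtlety is the off-diagonal cross terms in the $J$-block construction: one must recognize that they are determined by the row (resp.\ column) sum vectors of the $A_i$, and then invoke the preliminary fact that Gram mates agree on these vectors. Everything else is routine block multiplication, which is why the statement can be recorded without a separate proof.
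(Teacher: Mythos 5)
Your proof is correct and follows exactly the route the paper intends: the paper omits the proof, noting only that all propositions in Section \ref{section:construction} follow by directly verifying Definition \ref{Def:GramMates}, which is precisely your block-multiplication argument. Your identification of the off-diagonal block $(A_1\mathbf{1})\mathbf{1}^T+\mathbf{1}(A_2\mathbf{1})^T$ and the appeal to equality of row and column sum vectors for Gram mates is the right (and only nontrivial) step.
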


Let $\otimes$ denote the Kronecker product of matrices \cite{Demmel:AppliedNumerical}.

\begin{proposition}\label{construction:3}
	Suppose that $(A_1,B_1)$ and $(A_2,B_2)$ are pairs of Gram mates. Then, $A_1\otimes B_1$ and $A_2\otimes B_2$ are Gram mates. 
\end{proposition}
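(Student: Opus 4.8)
The plan is to verify Definition~\ref{Def:GramMates} directly for the single pair $M := A_1 \otimes B_1$ and $N := A_2 \otimes B_2$, in the spirit of the remark preceding the proposition that every construction in this section is settled by checking the definition. The whole argument rests on two standard facts about the Kronecker product: the transpose rule $(X \otimes Y)^T = X^T \otimes Y^T$, and the mixed-product rule $(X \otimes Y)(Z \otimes W) = (XZ)\otimes(YW)$ whenever $XZ$ and $YW$ are defined. Together these collapse any product of two Kronecker products into a single Kronecker product of ordinary matrix products, which is exactly what is needed to evaluate $MM^T$, $NN^T$, $M^TM$ and $N^TN$ in closed form.

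Concretely, I would first apply the transpose rule to get $M^T = A_1^T \otimes B_1^T$ and $N^T = A_2^T \otimes B_2^T$, and then the mixed-product rule to obtain
\[
MM^T = (A_1A_1^T)\otimes(B_1B_1^T), \qquad NN^T = (A_2A_2^T)\otimes(B_2B_2^T).
\]
Next I would invoke the hypotheses: since $(A_1,B_1)$ and $(A_2,B_2)$ are Gram-mate pairs, Definition~\ref{Def:GramMates} supplies $A_1A_1^T = B_1B_1^T$, $A_1^TA_1 = B_1^TB_1$, $A_2A_2^T = B_2B_2^T$ and $A_2^TA_2 = B_2^TB_2$, which I would substitute into the factors of the Kronecker products above. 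Repeating the same computation with the order of multiplication reversed yields $M^TM = (A_1^TA_1)\otimes(B_1^TB_1)$ and $N^TN = (A_2^TA_2)\otimes(B_2^TB_2)$, ready for the analogous substitutions. Finally, for the distinctness clause of the definition I would verify $A_1\otimes B_1 \neq A_2\otimes B_2$, using that each given pair consists of distinct matrices together with the block pattern of a Kronecker product.

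The step I expect to be the main obstacle is reconciling the two resulting Kronecker products so that the conditions $MM^T = NN^T$ and $M^TM = N^TN$ genuinely hold: this is the point at which the Gram-mate relations of \emph{both} pairs must be brought to bear simultaneously, and at which one must track the conformability of all block sizes so that the two Kronecker products being equated actually lie in the same matrix space (that is, that $A_1\otimes B_1$ and $A_2\otimes B_2$, and hence their Gram matrices, have matching dimensions). I anticipate that this size bookkeeping, together with the distinctness check, is the genuinely delicate part of the argument, with the two Kronecker identities reducing everything else to routine substitution of the four defining equalities.
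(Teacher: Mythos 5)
Your two Kronecker identities are indeed the whole toolkit the paper has in mind (this section's propositions are stated without proof, as direct verifications of Definition~\ref{Def:GramMates}), but the step you postpone as ``the main obstacle'' is not delicate bookkeeping that can be pushed through later --- it is unfixable, because the proposition read literally, with $M=A_1\otimes B_1$ and $N=A_2\otimes B_2$, is false. After your substitutions you are comparing $MM^T=(A_1A_1^T)\otimes(A_1A_1^T)$ with $NN^T=(A_2A_2^T)\otimes(A_2A_2^T)$, and the hypotheses supply no relation whatsoever between the Gram matrices of the first pair and those of the second. Concretely: let $A_1$ be any non-identity $4\times 4$ permutation matrix and $B_1=I_4$ (a pair of Gram mates), and let $A_2=\begin{bmatrix}0&J_2\\J_2&0\end{bmatrix}$, $B_2=\begin{bmatrix}J_2&0\\0&J_2\end{bmatrix}$ (the rank-one pair of Subsection~\ref{Subsec:realizable rank 1}); then $MM^T=(A_1A_1^T)\otimes(B_1B_1^T)=I_{16}$, while $NN^T=(A_2A_2^T)\otimes(B_2B_2^T)$ has nonzero entries equal to $4$. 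Even more cheaply, taking $(A_2,B_2)=(A_1,B_1)$ gives $M=N$, violating the distinctness clause of Definition~\ref{Def:GramMates}. Your side worry that $M$ and $N$ might not even lie in the same matrix space is a symptom of the same problem: nothing in the hypotheses forces $m_1=m_2$ and $n_1=n_2$ under this pairing.

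The resolution is that the Kronecker product must be taken \emph{across} the two pairs, in line with Proposition~\ref{construction:2} and the graph-product remark following the proposition: the Gram mates are $M=A_1\otimes A_2$ and $N=B_1\otimes B_2$ (equivalently, read the hypothesis as saying that $(A_1,A_2)$ and $(B_1,B_2)$ are the Gram-mate pairs). With that reading your identities close the argument at once: $M$ and $N$ are both $m_1m_2\times n_1n_2$, and
\[
MM^T=(A_1A_1^T)\otimes(A_2A_2^T)=(B_1B_1^T)\otimes(B_2B_2^T)=NN^T,
\]
using one hypothesis in each Kronecker factor, and the same computation gives $M^TM=N^TN$. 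For distinctness, note first that no Gram mate is a zero matrix (if $A=0$ then $\mathrm{tr}(BB^T)=0$ forces $B=0=A$); hence one may choose $(i,j)$ at which $A_1$ and $B_1$ differ, say $(A_1)_{ij}=1$ and $(B_1)_{ij}=0$, and $(k,l)$ with $(A_2)_{kl}=1$ (the symmetric case is identical), and then the corresponding entry of $M$ is $1$ while that of $N$ is $0$, so $M\neq N$.
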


\begin{remark}
	In analogy to graph operations, one might regard ways of the constructions for Gram mates in Propositions \ref{construction:1}--\ref{construction:3} as the complement of a graph, the disjoint union of two graphs and the join of two graphs, and the Cartesian product of two graphs, respectively.
\end{remark}

We can apply analogous approaches with Propositions \ref{construction:2} and \ref{construction:3} to realizable matrices. Given realizable matrices $E_1$ and $E_2$, it follows from Proposition \ref{construction:2} that $\begin{bmatrix}
E_1 & 0\\
0 & E_2
\end{bmatrix}$ is realizable.

\begin{proposition}
	Let $E$ be a realizable, and let $X$ be a $(0,1)$ matrix. Then, $X\otimes E$ and $E\otimes X$ are realizable.
\end{proposition}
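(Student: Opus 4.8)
The plan is to exhibit, for each product, an explicit pair of Gram mates whose difference is that product, relying on the mixed-product rule $(P\otimes Q)(R\otimes S)=(PR)\otimes(QS)$ and the transpose rule $(P\otimes Q)^T=P^T\otimes Q^T$ for Kronecker products. Since $E$ is realizable, fix a pair of Gram mates $(A,B)$ with $B=A+E$; we may assume $X\neq 0$, as otherwise $X\otimes E$ is the zero matrix, which is not realizable.

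First I would handle $X\otimes E$ via the candidate pair $(X\otimes A,\,X\otimes B)$. Because $X$, $A$ and $B$ are $(0,1)$ matrices, so are $X\otimes A$ and $X\otimes B$, and $(X\otimes B)-(X\otimes A)=X\otimes(B-A)=X\otimes E$. To see they are Gram mates, the transpose and mixed-product rules give
\begin{align*}
(X\otimes A)(X\otimes A)^T&=(XX^T)\otimes(AA^T)=(XX^T)\otimes(BB^T)=(X\otimes B)(X\otimes B)^T,\\
(X\otimes A)^T(X\otimes A)&=(X^TX)\otimes(A^TA)=(X^TX)\otimes(B^TB)=(X\otimes B)^T(X\otimes B),
\end{align*}
where the middle equalities are exactly $AA^T=BB^T$ and $A^TA=B^TB$. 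Finally $X\otimes A\neq X\otimes B$, since $X\neq 0$ and $E\neq 0$ force $X\otimes E\neq 0$. Thus $(X\otimes A,X\otimes B)$ is a pair of Gram mates, so $X\otimes E$ is realizable.

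For $E\otimes X$ I would argue symmetrically with $(A\otimes X,\,B\otimes X)$, whose difference is $(B-A)\otimes X=E\otimes X$; the identical computations yield $(A\otimes X)(A\otimes X)^T=(AA^T)\otimes(XX^T)=(BB^T)\otimes(XX^T)=(B\otimes X)(B\otimes X)^T$ together with the analogous equality for the products $(\cdot)^T(\cdot)$, and distinctness again follows from $E\otimes X\neq 0$.

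There is no real obstacle beyond bookkeeping, so the work is entirely in the spirit of the ``direct verification'' remark preceding Proposition \ref{construction:1}. The only points needing a word are the degenerate case $X=0$ (excluded above) and the implicit requirement that $X\otimes E$ be a $(0,1,-1)$ matrix with zero row and column sums; the latter is automatic, since once $X\otimes A$ and $X\otimes B$ are known to be Gram mates they share row and column sums (as noted in Section \ref{Subsec:preli Gram}), whence their difference $X\otimes E$ has vanishing margins. The heart of the matter is simply that the mixed-product rule lets the shared Gram matrices $AA^T=BB^T$ and $A^TA=B^TB$ pass through the Kronecker product unchanged.
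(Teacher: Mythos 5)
Your proof is correct and is precisely the ``direct verification of Definition \ref{Def:GramMates}'' that the paper invokes when omitting the proof: the candidate mates $(X\otimes A, X\otimes B)$ and $(A\otimes X, B\otimes X)$ together with the mixed-product rule are exactly the intended argument. Your explicit exclusion of the degenerate case $X=0$ (where $X\otimes E=0$ fails to be realizable) is a small but legitimate point that the paper's statement glosses over.
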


\begin{remark}
	With the aid of the results in this section, we can construct Gram mates via realizable matrices of rank more than $2$ from those of rank at most $2$ that we studied in Section \ref{section:rank1 and 2}. As an example, let us consider a realizable matrix $E$ of rank $2$. For a $(0,1)$ matrix $X$ of rank $k>0$, $X\otimes E$ is realizable, and its rank is $2k$.
\end{remark}

\section{Non-isomorphic Gram mates via realizable matrices of rank $1$}\label{section:noniso}

Here we revisit the speculative statement in \cite{everett:dual}: if Gram mates $A$ and $B$ with distinct positive singular values are convertible, then $A$ and $B$ are isomorphic with very high probability. We characterize non-isomorphic square Gram mates with all distinct singular values---in particular the multiplicity of $0$ as a singular value is $1$, if Gram mates are singular---via a realizable matrix of rank $1$. As seen in Subsection \ref{Subsec:realizable rank 1}, those Gram mates are convertible.

Let $A$ and $B$ be Gram mates such that $\mathrm{rank}(A-B)=1$. Let $\alpha$ (resp. $\beta$) be the set of row indices (resp. column indices) such that there is a nonzero entry in the corresponding rows (resp. columns) of $A-B$. Then, $A[\alpha^c,\beta^c]=B[\alpha^c,\beta^c]$. The submatrix $A[\alpha^c,\beta^c]$ is said to be the \textit{remaining matrix} of Gram mates $A$ and $B$. Rearranging rows in order of $\alpha$ and $\alpha^c$ and columns in order of $\beta$ and $\beta^c$, we can obtain isomorphic matrices to $A$ and $B$, respectively, as we preserve the structure of the remaining matrix. Without loss of generality, by Theorem \ref{Thm:Gram mates of rank1},
\begin{align}\label{temp:matrices A and B rank 1}
A=\begin{bmatrix}
0 & J_{k_1,k_2} & X_1\\
J_{k_1,k_2} & 0 & X_2\\
X_3 & X_4 & Y
\end{bmatrix},\; B=\begin{bmatrix}
J_{k_1,k_2} & 0 & X_1\\
0 & J_{k_1,k_2} & X_2\\
X_3 & X_4 & Y
\end{bmatrix}
\end{align}
$\mathbf{1}^TX_1=\mathbf{1}^TX_2$ and $X_3\mathbf{1}=X_4\mathbf{1}$ where $k_1,k_2>0$. Note that $Y$ is the remaining matrix of $A$ and $B$.

We now consider sets of particular permutation matrices in order to establish families of non-isomorphic Gram mates (Proposition \ref{Prop:fixable iff row and column sum vectors} and Theorem \ref{Thm:non-isomorphic distinct sv}). Let $Z_i$ be an $m\times n$ $(0,1)$ matrix for $i=1,\dots,4$. Define $\cR_{Z_1,Z_2}$ to be the set of all $3$-tuples $(P_1,P_2,Q)$, where $P_1$, $P_2$ and $Q$ are permutation matrices such that $Z_2=P_1Z_1Q$ and $Z_1=P_2Z_2Q$. We also define $\cL_{Z_3,Z_4}$ as the set of all $3$-tuples $(P,Q_3,Q_4)$, where $P$, $Q_3$ and $Q_4$ are permutation matrices such that $Z_3=PZ_3Q_3$ and $Z_4=PZ_4Q_4$. 

If there exist $(P_1,P_2,Q)\in \cR_{X_1,X_2}$ and $(P,Q_3,Q_4)\in \cL_{X_3,X_4}$ such that $Y=PYQ$, then 
$$
\begin{bmatrix}
0 & P_2 & 0\\
P_1 & 0 & 0\\
0 & 0 & P
\end{bmatrix}A\begin{bmatrix}
Q_3 & 0 & 0\\
0 & Q_4 & 0\\
0 & 0 & Q
\end{bmatrix}=\begin{bmatrix}
J_{k_1,k_2} & 0 & P_2X_2Q\\
0 & J_{k_1,k_2} & P_1X_1Q\\
PX_3Q_3 & PX_4Q_4 & PYQ
\end{bmatrix}=B.
$$
So, $A$ and $B$ are isomorphic. Similarly, if there exist $(P_1,P_2,Q)\in \cR_{X_3^T,X_4^T}$ and $(P,Q_3,Q_4)\in \cL_{X_1^T,X_2^T}$ such that $Y^T=PY^TQ$, then 
$$
\begin{bmatrix}
Q_3^T & 0 & 0\\
0 & Q_4^T & 0\\
0 & 0 & Q^T
\end{bmatrix}A\begin{bmatrix}
0 & P_1^T & 0\\
P_2^T & 0 & 0\\
0 & 0 & P^T
\end{bmatrix}=B.
$$
Hence, $A$ is isomorphic to $B$. The remaining matrix $Y$ of $A$ and $B$ is said to be \textit{fixable} if there exist permutation matrices $P$ and $Q$ such that $Y=PYQ$ and one of two cases holds: 
\begin{enumerate}[label=(\roman*)]
	\item $(P_1,P_2,Q)\in\cR_{X_1,X_2}$ and $(P,Q_3,Q_4)\in\cL_{X_3,X_4}$ for some $P_1,P_2,Q_3,Q_4$; and
	\item $(Q_3,Q_4,P^T)\in\cR_{X_3^T,X_4^T}$ and $(Q^T,P_1,P_2)\in\cL_{X_1^T,X_2^T}$  for some $P_1,P_2,Q_3,Q_4$.
\end{enumerate}

The following proposition immediately follows.

\begin{proposition}\label{Prop:fixable and isomorphic}
	Suppose that $A$ and $B$ are Gram mates and $\mathrm{rank}(A-B)=1$. If the remaining matrix of $A$ and $B$ is fixable, then $A$ is isomorphic to $B$.
\end{proposition}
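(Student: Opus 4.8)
The plan is to recognize that the two block-permutation computations displayed immediately before the definition of \emph{fixable} already carry all the content of the claim; the proof reduces to matching each of the two cases in that definition to one of those computations and reading off the conclusion. So after fixing notation I would simply invoke the appropriate display in each case, which is precisely why the statement can be asserted to follow at once.

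First I would record the normalization. Since $\mathrm{rank}(A-B)=1$, permuting rows and columns of the pair (which preserves the Gram-mate property and does not alter the isomorphism type) lets me assume $A$ and $B$ have the block form \eqref{temp:matrices A and B rank 1}, with $Y$ the remaining matrix. Because $Y$ is fixable, there exist permutation matrices $P,Q$ with $Y=PYQ$ for which one of the two listed cases holds. In case (i) the triples $(P_1,P_2,Q)\in\cR_{X_1,X_2}$ and $(P,Q_3,Q_4)\in\cL_{X_3,X_4}$ are exactly the hypotheses of the first displayed identity, which conjugates $A$ into $B$ by explicit block-diagonal permutation matrices; thus $B=\widehat P A\widehat Q$ for permutation matrices $\widehat P,\widehat Q$, and $A$ is isomorphic to $B$.

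In case (ii) I would pass to transposes: transposing $Y=PYQ$ gives $Y^T=Q^TY^TP^T$, and together with $(Q_3,Q_4,P^T)\in\cR_{X_3^T,X_4^T}$ and $(Q^T,P_1,P_2)\in\cL_{X_1^T,X_2^T}$ these are precisely the inputs of the second displayed identity (with its internal permutation labels relabelled accordingly), which again produces block permutation matrices realizing $B$ from $A$. The only real care needed, and hence the main though minor obstacle, is this bookkeeping: one must confirm that the index assignment in case (ii) reproduces the relation $Y^T=P'Y^TQ'$ demanded by the second display, i.e.\ that identifying the $\cR$-triple there with $(Q_3,Q_4,P^T)$ and the $\cL$-triple with $(Q^T,P_1,P_2)$ is consistent with the transpose of $Y=PYQ$. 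Once that matching is verified the conclusion is immediate in both cases, so no argument beyond the two preceding computations is required.
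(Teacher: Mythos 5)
Your proposal is correct and is essentially the paper's own argument: the paper gives no separate proof, stating only that the proposition ``immediately follows'' from the two displayed block-permutation identities preceding the definition of \emph{fixable}, which is exactly the case analysis and label-matching you carry out. Your verification that case (ii) of the definition, via $Y^T=Q^TY^TP^T$, supplies precisely the inputs of the second display is the only (minor) bookkeeping the paper leaves implicit, and you handle it correctly.
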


\begin{remark}
	Let $Z_1$ and $Z_2$ be $(0,1)$ matrices of the same size. We have $(I,I,I)\in\cL_{Z_1,Z_2}$. If $Z_1$ and $Z_2$ are not isomorphic, then $\cR_{Z_1,Z_2}$ and $\cR_{Z_1^T,Z_2^T}$ are empty. For the matrices $A$ and $B$ in \eqref{temp:matrices A and B rank 1}, if $\cR_{X_1,X_2}$ and $\cR_{X_3^T,X_4^T}$ both are empty, then $Y$ is not fixable.
\end{remark}

\begin{example}\label{Example: nonisomorphic rank 1}
	Here we revisit Example \ref{Example:rank1 Gram mates}:
	$$A=\left[\begin{array}{cc|cc|ccc}
	0 & 0 & 1 & 1 & 1 & 1 & 0\\
	0 & 0 & 1 & 1 & 0 & 0 & 1\\\hline
	1 & 1 & 0 & 0 & 1 & 1 & 1\\
	1 & 1 & 0 & 0 & 0 & 0 & 0\\\hline
	1 & 0 & 1 & 0 & 1 & 1 & 1\\
	1 & 0 & 1 & 0 & 1 & 1 & 1\\
	0 & 1 & 1 & 0 & 1 & 1 & 1
	\end{array}\right],\;B=\left[\begin{array}{cc|cc|ccc}
	1 & 1 & 0 & 0 & 1 & 1 & 0\\
	1 & 1 & 0 & 0 & 0 & 0 & 1\\\hline
	0 & 0 & 1 & 1 & 1 & 1 & 1\\
	0 & 0 & 1 & 1 & 0 & 0 & 0\\\hline
	1 & 0 & 1 & 0 & 1 & 1 & 1\\
	1 & 0 & 1 & 0 & 1 & 1 & 1\\
	0 & 1 & 1 & 0 & 1 & 1 & 1
	\end{array}\right]$$
	where $A$ and $B$ are conformally partitioned with the matrices in \eqref{temp:matrices A and B rank 1}, and we use the same notation in \eqref{temp:matrices A and B rank 1}. Since $X_1$ and $X_2$ are not isomorphic, $\cR_{X_1,X_2}$ is empty. Similarly, $\cR_{X_3^T,X_4^T}$ is also empty. So, the remaining matrix is not fixable. 
	
	We claim that $A$ and $B$ are not isomorphic. The multi-sets of row sums and column sums of a matrix are invariant under permutation of rows and columns in the matrix. The row and column sum vectors for $A$ and $B$ are $(4,3,5,2,5,5,5)$. So, if $A$ and $B$ are isomorphic, then necessarily $A[\alpha,\alpha]$ is isomorphic to $B[\alpha,\alpha]$ where $\alpha=\{3,5,6,7\}$. However, $A[\alpha,\alpha]$ contains fewer ones than $B[\alpha,\alpha]$. Hence, $A$ and $B$ are not isomorphic.
\end{example}

We shall consider the converse of Proposition \ref{Prop:fixable and isomorphic} under some circumstances motivated by Example \ref{Example: nonisomorphic rank 1}.

\begin{remark}\label{Remark:permutations for block}
	Let $A=\begin{bmatrix}
	0 & J_{k_1,k_2}\\
	J_{k_1,k_2} & 0
	\end{bmatrix}$ and $B=\begin{bmatrix}
	J_{k_1,k_2} & 0\\
	0 & J_{k_1,k_2}
	\end{bmatrix}$ where $k_1,k_2>0$. One can verify that for any permutation matrices $P$ and $Q$ such that $B=PAQ$, a pair $(P,Q)$ is either 
	\begin{center}
		$\left(\begin{bmatrix}
		P_1 & 0\\
		0 & P_2
		\end{bmatrix},\begin{bmatrix}
		0 & Q_1\\
		Q_1 & 0
		\end{bmatrix}\right)$ or $\left(\begin{bmatrix}
		0 & P_1\\
		P_2 & 0
		\end{bmatrix},\begin{bmatrix}
		Q_1 & 0\\
		0 & Q_2
		\end{bmatrix}\right)$
	\end{center}
	for some permutation matrices $P_1,P_2,Q_1$ and $Q_2$.
\end{remark}

\begin{proposition}\label{Prop:fixable iff row and column sum vectors}
	Let $A$ and $B$ be Gram mates of the form \eqref{temp:matrices A and B rank 1}. Let $R$ and $S$ be the row and column sum vectors of $A$, and conformally partitioned with the rows and the columns of $A$ as $R=(R_1,R_2,R_3)$ and $S=(S_1,S_2,S_3)$, respectively. Suppose that for $i=1,2$, the set of all entries of $R_i$ (resp. $S_i$) does not have any element in common with that of $R_3$ (resp. $S_3$). Then, $A$ is isomorphic to $B$ if and only if the remaining matrix is fixable.
\end{proposition}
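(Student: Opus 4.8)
The forward implication is exactly Proposition~\ref{Prop:fixable and isomorphic}, so the plan is to prove the converse: assuming $A$ is isomorphic to $B$, I will show that the remaining matrix $Y$ is fixable. Write $B=\widehat{P}A\widehat{Q}$ for permutation matrices $\widehat{P},\widehat{Q}$. The first step is to pin down the coarse block structure of $\widehat{P}$ and $\widehat{Q}$. Since $A$ and $B$ are Gram mates they share the row sum vector $R$ and the column sum vector $S$; using $\widehat{Q}\mathbf{1}=\mathbf{1}$ and $\mathbf{1}^T\widehat{P}=\mathbf{1}^T$ this gives $R=\widehat{P}A\widehat{Q}\mathbf{1}=\widehat{P}A\mathbf{1}=\widehat{P}R$ and, dually, $S^T=\mathbf{1}^T\widehat{P}A\widehat{Q}=S^T\widehat{Q}$. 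Hence $\widehat{P}$ permutes only among row positions carrying equal entries of $R$, and $\widehat{Q}$ only among column positions carrying equal entries of $S$.

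The second step is where the hypothesis enters. Because the entry set of $R_3$ is disjoint from those of $R_1$ and of $R_2$, no value occurring in a block-$3$ row occurs in a block-$1$ or block-$2$ row; therefore $\widehat{P}$ cannot move a block-$3$ row out of block $3$, and so it preserves the partition of the rows into $\{\text{blocks }1,2\}$ and $\{\text{block }3\}$. The same argument applied to $S$ handles the columns. Consequently $\widehat{P}=\mathrm{diag}(P',P'')$ and $\widehat{Q}=\mathrm{diag}(Q',Q'')$, where $P',Q'$ act on the first two row/column blocks and $P'',Q''$ on the third. Note that blocks $1$ and $2$ may still be interchanged by $P'$ and $Q'$, which is precisely the freedom the next step accommodates.

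Abbreviate $\mathcal{A}=\left[\begin{smallmatrix}0 & J\\ J & 0\end{smallmatrix}\right]$, $\mathcal{B}=\left[\begin{smallmatrix}J & 0\\ 0 & J\end{smallmatrix}\right]$, $\mathcal{X}=\left[\begin{smallmatrix}X_1\\ X_2\end{smallmatrix}\right]$ and $\mathcal{Z}=\begin{bmatrix}X_3 & X_4\end{bmatrix}$, so that $A=\left[\begin{smallmatrix}\mathcal{A} & \mathcal{X}\\ \mathcal{Z} & Y\end{smallmatrix}\right]$ and $B=\left[\begin{smallmatrix}\mathcal{B} & \mathcal{X}\\ \mathcal{Z} & Y\end{smallmatrix}\right]$. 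Substituting the block-diagonal forms of $\widehat{P},\widehat{Q}$ into $B=\widehat{P}A\widehat{Q}$ and comparing blocks yields the four identities $\mathcal{B}=P'\mathcal{A}Q'$, $\mathcal{X}=P'\mathcal{X}Q''$, $\mathcal{Z}=P''\mathcal{Z}Q'$ and $Y=P''YQ''$. Applying Remark~\ref{Remark:permutations for block} to $\mathcal{B}=P'\mathcal{A}Q'$ forces $(P',Q')$ into one of two forms, which I will treat separately. When $P'$ is block-antidiagonal and $Q'$ block-diagonal, the identity $\mathcal{X}=P'\mathcal{X}Q''$ unpacks into the two relations defining membership in $\cR_{X_1,X_2}$, while $\mathcal{Z}=P''\mathcal{Z}Q'$ gives membership in $\cL_{X_3,X_4}$; with $P=P''$ and $Q=Q''$ the identity $Y=P''YQ''$ then delivers fixability in case~(i). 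When instead $P'$ is block-diagonal and $Q'$ block-antidiagonal, the same computation produces the transposed relations, placing $(Q_3,Q_4,P^T)\in\cR_{X_3^T,X_4^T}$ and $(Q^T,P_1,P_2)\in\cL_{X_1^T,X_2^T}$, which is exactly fixability in case~(ii).

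The main obstacle is not any single computation but the bookkeeping: one must correctly match each of the two permutation patterns from Remark~\ref{Remark:permutations for block} with the correct clause in the definition of ``fixable,'' tracking which off-diagonal block plays the role of $P_1$ versus $P_2$ and where the transposes enter in the second case. The only genuinely substantive point, rather than routine verification, is the reduction in the second step: the disjointness hypothesis on the entries of $R_i$ and $S_i$ is exactly what forbids permutations mixing the $Y$-block with the $J$-blocks, so that $\widehat{P}$ and $\widehat{Q}$ are forced to be block diagonal; without it these permutations need not split and the argument breaks down.
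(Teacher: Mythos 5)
Your proposal is correct and follows essentially the same route as the paper: use preservation of the row/column sum multisets together with the disjointness hypothesis to force $\widehat P$ and $\widehat Q$ to be block diagonal with respect to the partition $\{\text{blocks }1,2\}$ versus $\{\text{block }3\}$, invoke Remark~\ref{Remark:permutations for block} to split $(P',Q')$ into the two possible patterns, and read off the $\cR$/$\cL$ memberships and $Y=P''YQ''$ from the block identities. The matching of the two patterns to clauses (i) and (ii) of fixability is also consistent with the paper's computation.
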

\begin{proof}
	Suppose that $A$ and $B$ are isomorphic, say $B=PAQ$ for some permutation matrices $P$ and $Q$. Since the multi-sets of row sums and column sums of a matrix are preserved by permutation, for $i=1,2$, any row of $A$ indexed by an entry in $R_i$ cannot turn into some row indexed by an entry in $R_3$ in order to obtain $B$ by permutation. Similarly, one can find an analogous result with respect to columns by using $S_1$, $S_2$, and $S_3$. Hence, considering Remark \ref{Remark:permutations for block}, a pair $(P,Q)$ must be one of the following:
	\begin{align}\label{permutation P Q mtx form}
	\left(\begin{bmatrix}
	P_1 & 0 & 0\\
	0 & P_2 & 0\\
	0 & 0 & P_3
	\end{bmatrix},\; 
	\begin{bmatrix}
	0 & Q_1 & 0\\
	Q_2 & 0 & 0\\
	0 & 0 & Q_3
	\end{bmatrix}\right),\;\left(\begin{bmatrix}
	0& P_1 & 0\\
	P_2 & 0 & 0\\
	0 & 0 & P_3
	\end{bmatrix},\; 
	\begin{bmatrix}
	Q_1 & 0 & 0\\
	0 & Q_2 & 0\\
	0 & 0 & Q_3
	\end{bmatrix}\right).
	\end{align}
	Let $(P,Q)$ be the former of the two cases. From $B=PAQ$, one can check that $X_3=P_3X_4Q_2$, $X_4=P_3X_3Q_1$, $X_1=P_1X_1Q_3$, $X_2=P_2X_2Q_3$ and $Y=P_3YQ_3$. Hence, $Y$ is fixable. The other case of $(P,Q)$ can be easily checked. Hence, the remaining matrix is fixable.
	
	The converse of the proof follows from Proposition \ref{Prop:fixable and isomorphic}.
\end{proof}

\begin{example}\label{Example:same entries in sum vectors}
	Let $E=\begin{bmatrix}
	J_3 & -J_3 & 0\\
	-J_3 & J_3 & 0\\
	0 & 0 & 0
	\end{bmatrix}$. Consider $$A=\begin{bmatrix}
	0 & J_3 & X_1\\
	J_3 & 0 & X_2\\
	X_3 & X_4 & Y
	\end{bmatrix}=\left[\begin{array}{ccc|ccc|cccc}
	0 & 0 & 0 & 1 & 1 & 1 & 1 & 1 & 0 & 0\\
	0 & 0 & 0 & 1 & 1 & 1 & 0 & 1 & 1 & 0\\
	0 & 0 & 0 & 1 & 1 & 1 & 0 & 0 & 1 & 1\\\hline
	1 & 1 & 1 & 0 & 0 & 0 & 0 & 1 & 0 & 1\\
	1 & 1 & 1 & 0 & 0 & 0 & 0 & 1 & 1 & 0\\
	1 & 1 & 1 & 0 & 0 & 0 & 1 & 0 & 1 & 0\\\hline
	1 & 0 & 1 & 1 & 1 & 0 & 0 & 1 & 1 & 1\\
	1 & 1 & 1 & 1 & 1 & 1 & 1 & 1 & 1 & 0\\
	1 & 1 & 0 & 0 & 1 & 1 & 1 & 1 & 1 & 1\\
	0 & 1 & 1 & 1 & 0 & 1 & 1 & 1 & 1 & 1
	\end{array}\right].$$
	Since $\mathbf{1}^TX_1=\mathbf{1}^TX_2$ and $X_3\mathbf{1}=X_4\mathbf{1}$, $A$ and $A+E$ are Gram mates. Moreover, $(P_1,P_2,Q)\in\cR_{X_1,X_2}$ where $P_1$, $P_2$ and $Q$ correspond to permutations $(1,3)$, $(1,3)$ and $(2,3)$ in cycle notation, respectively. Note that $(I,I,I)\in\cL_{X_3,X_4}$. Since $Y$ is invariant under the column permutation corresponding to $(2,3)$, the remaining matrix of $A$ is fixable. The row and column sum vectors of $A$ are $(5\mathbf{1}_6^T,7,9,8,8)$ and $(6\mathbf{1}_6^T,5,8,8,5)$, respectively. By Proposition \ref{Prop:fixable iff row and column sum vectors}, $A$ and $B$ are isomorphic.
\end{example}

Let square matrices $A$ and $B$ be Gram mates, where $\mathrm{rank}(A-B)=1$. Suppose that all singular values of $A$ are distinct. We claim that if the remaining matrix of $A$ is not fixable, then $A$ and $B$ are not isomorphic. To establish the claim, we consider that for the adjacency matrix $X$ of a (multi-)graph, the \textit{automorphism group} of $X$, denoted $\Gamma(X)$, is defined as the set of all permutation matrices $P$ such that $PXP^T=X$. We refer the interested reader to \cite{Dragos:SpectraOfGraphs} for properties of $\Gamma(X)$ regarding eigenvalues of $X$, and to \cite{Godsil:AlgebraicGraph} for an introduction to the automorphism group of a connected simple graph regarding group action.

Let $A$ and $B$ (not necessarily square) be Gram mates. Suppose that $A$ and $B$ are isomorphic. Then, there exist permutation matrices $P$ and $Q$ such that $B=PAQ$. Since $A$ and $B$ are Gram mates, we have $PAA^TP^T=AA^T$ and $Q^TA^TAQ=A^TA$. Therefore, $P\in\Gamma(AA^T)$ and $Q\in\Gamma(A^TA)$. However, the converse does not hold.

\begin{example}
	Non-isomorphic symmetric balanced incomplete block designs $A$ and $B$ can be found in \cite{Wallis:DesignTheory}. Since $AA^T=A^TA=aI+bJ$ for some $a,b>0$, $\Gamma(AA^T)$ and $\Gamma(A^TA)$ are isomorphic to the symmetric group.
\end{example}

\begin{theorem}\cite{Dragos:SpectraOfGraphs,Mow:AutoAndDistinct}
	Let $X$ be the adjacency matrix of a multigraph. Suppose that $X$ has all distinct eigenvalues. Then, for any $P\in\Gamma(X)$, $P^2=I$. Furthermore, this implies $\Gamma(X)$ is abelian.
\end{theorem}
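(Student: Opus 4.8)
The plan is to use that $X$, being the adjacency matrix of a multigraph, is real symmetric and hence orthogonally diagonalizable, together with the structural result Proposition~\ref{Prop:commute with diagonal matrix} on orthogonal matrices commuting with a diagonal matrix having distinct diagonal entries. The whole argument reduces to the observation that an automorphism must act as $\pm 1$ on each (one-dimensional) eigenspace of $X$.

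First I would rewrite the automorphism condition. Since $P$ is a permutation matrix it is orthogonal, so $P^T=P^{-1}$; hence $PXP^T=X$ is equivalent to $PX=XP$, i.e.\ $P$ commutes with $X$. Next, because $X$ is real symmetric there is an orthogonal matrix $U$ and a diagonal matrix $\Lambda=\mathrm{diag}(\lambda_1,\dots,\lambda_n)$ with $X=U\Lambda U^T$, and the hypothesis that all eigenvalues are distinct means the $\lambda_i$ are pairwise distinct. Conjugating, $PX=XP$ becomes $(U^TPU)\Lambda=\Lambda(U^TPU)$, and $U^TPU$ is again orthogonal.

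Now I would invoke Proposition~\ref{Prop:commute with diagonal matrix} with $\ell=n$ and all multiplicities equal to $1$: an orthogonal matrix commuting with $\Lambda$ must be block diagonal compatible with the partition of $\Lambda$, and here every block is $1\times 1$. A $1\times 1$ orthogonal matrix is $\pm 1$, so $U^TPU=\mathrm{diag}(\epsilon_1,\dots,\epsilon_n)$ with each $\epsilon_i\in\{1,-1\}$. Squaring gives $(U^TPU)^2=I$, and transporting back yields $P^2=U(U^TPU)^2U^T=I$. (Equivalently, each eigenvector $\bv$ of $X$ spans a one-dimensional $P$-invariant subspace, so $P\bv=\epsilon\bv$ with $\epsilon=\pm 1$ since $P$ is orthogonal, whence $P^2$ fixes a basis.)

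Finally, the abelian claim is the standard group-theoretic fact that a group in which every element is an involution is abelian: for $P,R\in\Gamma(X)$, from $P^2=R^2=(PR)^2=I$ we obtain $PR=(PR)^{-1}=R^{-1}P^{-1}=RP$. The only points requiring care are the reduction of $PXP^T=X$ to the commuting relation and the verification that the hypotheses of Proposition~\ref{Prop:commute with diagonal matrix} apply after conjugation by $U$; once that setup is in place there is essentially no obstacle, since distinctness of the eigenvalues forces every diagonal block to be scalar.
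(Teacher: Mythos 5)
Your argument is correct and is the standard proof of this classical result; the paper itself gives no proof, simply citing Cvetkovi\'{c}--Doob--Sachs and Mowshowitz, and your route (reduce $PXP^T=X$ to $PX=XP$, orthogonally diagonalize the symmetric matrix $X$, and apply Proposition~\ref{Prop:commute with diagonal matrix} with all blocks of size one to force $U^TPU=\mathrm{diag}(\pm 1)$, then use the involution-implies-abelian fact) is exactly the argument in those references. No gaps.
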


\begin{corollary}\label{Cor:automorphism and distinct sv}
	Let $n\times n$ matrices $A$ and $B$ be Gram mates with all distinct singular values. If $A$ and $B$ are isomorphic, then $B$ is obtained from $A$ by permuting rows and columns according to permutations, any cycle in which is of length at most $2$.
\end{corollary}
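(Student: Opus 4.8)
The plan is to combine the isomorphism relation with the spectral constraint coming from the distinct-singular-value hypothesis, using the automorphism-group theorem stated immediately above.

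First I would record the isomorphism as $B = PAQ$ for permutation matrices $P$ and $Q$. As observed in the paragraph preceding this corollary, the Gram-mate identities $AA^T = BB^T$ and $A^TA = B^TB$ then force $P\,AA^T P^T = AA^T$ and $Q^T A^TA\, Q = A^TA$, that is, $P \in \Gamma(AA^T)$ and $Q \in \Gamma(A^TA)$ (the latter because $\Gamma$ is a group and hence closed under transposition).

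Next I would translate the singular-value hypothesis into a spectral statement. The nonzero singular values of $A$ are the positive square roots of the nonzero eigenvalues of $AA^T$ (equivalently $A^TA$), and $0$ occurs as a singular value precisely when $AA^T$ is singular; consequently, the condition that all $n$ singular values of $A$ are distinct is equivalent to $AA^T$ and $A^TA$ each having $n$ distinct eigenvalues. Both $AA^T$ and $A^TA$ are symmetric matrices with nonnegative integer entries, so each may be regarded as the adjacency matrix of a multigraph, its diagonal recording loops. The cited theorem then applies to both, yielding that every element of $\Gamma(AA^T)$ and of $\Gamma(A^TA)$ squares to the identity; in particular $P^2 = I$ and $Q^2 = I$. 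Finally, a permutation matrix squaring to the identity is an involution, i.e.\ its disjoint-cycle decomposition consists only of fixed points and transpositions, so every cycle has length at most $2$. Since $B = PAQ$ is obtained from $A$ by the row permutation $P$ and the column permutation $Q$, and each is such an involution, the conclusion follows.

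The only delicate point is the passage from the multigraph phrasing of the quoted theorem to the matrices $AA^T$ and $A^TA$, whose diagonals are generally nonzero. I expect this to be the main (though minor) obstacle: one should either note that loops are permitted in the ambient notion of multigraph, or---more robustly---observe that $P^2 = I$ is a purely linear-algebraic consequence of $P$ being a permutation matrix commuting with a symmetric matrix having distinct eigenvalues, since each one-dimensional eigenspace is preserved by $P$, forcing $P$ to act as $\pm 1$ on it and hence $P^2$ to fix every eigenvector. This observation makes the diagonal entries irrelevant and closes the argument.
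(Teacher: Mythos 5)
Your argument is correct and is essentially the paper's own: the paper derives $P\in\Gamma(AA^T)$ and $Q\in\Gamma(A^TA)$ in the paragraph preceding the corollary, observes that distinct singular values of the square matrix $A$ give $n$ distinct eigenvalues for each Gram matrix, and invokes the quoted theorem to conclude $P^2=Q^2=I$. Your closing remark giving a direct linear-algebraic proof that a permutation matrix commuting with a symmetric matrix having distinct eigenvalues is an involution is a nice way to sidestep any quibble about loops in the multigraph formulation, but it is not a departure from the paper's approach.
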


\begin{theorem}\label{Thm:non-isomorphic distinct sv}
	Let $n\times n$ $(0,1)$ matrices $A$ and $B$ be Gram mates of form \eqref{temp:matrices A and B rank 1} with all distinct singular values where $\mathrm{rank}(A-B)=1$. Then, $A$ and $B$ are isomorphic if and only if the remaining matrix is fixable.
\end{theorem}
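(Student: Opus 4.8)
The forward implication is already in hand: if the remaining matrix is fixable, Proposition~\ref{Prop:fixable and isomorphic} gives that $A$ and $B$ are isomorphic. The plan is to prove the converse, and the essential point is that the hypothesis of distinct singular values replaces the row/column-sum hypothesis of Proposition~\ref{Prop:fixable iff row and column sum vectors}: instead of comparing sum vectors, I would read the block structure of the isomorphism off a single Gram singular vector.

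So suppose $B=PAQ$ for permutation matrices $P,Q$. Since $A$ and $B$ are Gram mates, $AA^T=BB^T=PAA^TP^T$ and $A^TA=B^TB=Q^TA^TAQ$, so $P$ commutes with $AA^T$ and $Q$ commutes with $A^TA$. Because all singular values of $A$ are distinct (with $0$ of multiplicity at most one), $AA^T$ and $A^TA$ each have $n$ distinct eigenvalues and hence one-dimensional eigenspaces. By Theorem~\ref{Thm:Gram mates of rank1}, the Gram singular value $\sqrt{k_1k_2}$ has, up to sign, left singular vector $\bu=\tfrac{1}{\sqrt{2k_1}}[-\mathbf{1}_{k_1}^T,\ \mathbf{1}_{k_1}^T,\ \mathbf{0}^T]^T$ and right singular vector $\bv=\tfrac{1}{\sqrt{2k_2}}[\mathbf{1}_{k_2}^T,\ -\mathbf{1}_{k_2}^T,\ \mathbf{0}^T]^T$, which are eigenvectors of $AA^T$ and $A^TA$ for the simple eigenvalue $k_1k_2$. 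As $P$ and $Q$ are orthogonal and commute with these matrices, each maps $\bu$ (resp. $\bv$) to a unit eigenvector for the same eigenvalue, so $P\bu=\epsilon_P\bu$ and $Q\bv=\epsilon_Q\bv$ with $\epsilon_P,\epsilon_Q\in\{1,-1\}$.

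The crux is that these two sign relations determine $P$ and $Q$ at the block level. The entries of $\bu$ take the value $-\tfrac{1}{\sqrt{2k_1}}$ on the first row block, $+\tfrac{1}{\sqrt{2k_1}}$ on the second, and $0$ on the third; since a permutation only rearranges entries, $P\bu=\bu$ forces $P$ to fix each block, giving $P=\mathrm{diag}(P_1,P_2,P_3)$, whereas $P\bu=-\bu$ forces $P$ to interchange the first two (equally sized) blocks and fix the third. The same dichotomy applied to $\bv$ gives the two possible forms of $Q$. To pair the signs, I would verify by direct multiplication---using $X_3\mathbf{1}=X_4\mathbf{1}$, which kills the third block---that $A\bv=\sqrt{k_1k_2}\,\bu$ and $B\bv=-\sqrt{k_1k_2}\,\bu$; then $B\bv=PAQ\bv=\epsilon_P\epsilon_Q\sqrt{k_1k_2}\,\bu$ forces $\epsilon_P\epsilon_Q=-1$. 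Hence $(P,Q)$ is exactly one of the two paired forms in \eqref{permutation P Q mtx form}.

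From here the argument coincides with the first paragraph of the proof of Proposition~\ref{Prop:fixable iff row and column sum vectors}: expanding $B=PAQ$ block by block in the case $P=\mathrm{diag}(P_1,P_2,P_3)$ yields $X_1=P_1X_1Q_3$, $X_2=P_2X_2Q_3$, $X_3=P_3X_4Q_2$, $X_4=P_3X_3Q_1$ and $Y=P_3YQ_3$, which is precisely the assertion that the remaining matrix is fixable (the other sign case gives the transposed set of relations). The only real obstacle is the middle step---seeing that distinct singular values force $P\bu=\pm\bu$ and thereby the rigid block form of $P$ and $Q$---but this becomes routine once the explicit Gram singular vectors of Theorem~\ref{Thm:Gram mates of rank1} are invoked, and everything afterward is the bookkeeping already carried out for Proposition~\ref{Prop:fixable iff row and column sum vectors}.
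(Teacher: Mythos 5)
Your proof is correct, and it takes a genuinely different route from the paper's. The paper first invokes Corollary~\ref{Cor:automorphism and distinct sv} (via the Mowshowitz result) to get $P^2=Q^2=I$, and then runs a fairly long combinatorial case analysis by contradiction, tracking where the index sets $\alpha_1,\alpha_2,\alpha_3$ can be sent and ruling out each bad configuration. You instead exploit the spectral rigidity directly: since $P$ commutes with $AA^T$ and $Q$ with $A^TA$, and $k_1k_2$ is a simple eigenvalue with the explicit eigenvectors $\bu$ and $\bv$ from Theorem~\ref{Thm:Gram mates of rank1}, you get $P\bu=\pm\bu$ and $Q\bv=\pm\bv$, read the block structure of $P$ and $Q$ off the level sets of $\bu$ and $\bv$, and pair the signs via $A\bv=\sqrt{k_1k_2}\,\bu$ versus $B\bv=-\sqrt{k_1k_2}\,\bu$ (both computations check out, using $X_3\mathbf{1}=X_4\mathbf{1}$). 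This lands you exactly at the two forms in \eqref{permutation P Q mtx form}, after which the block bookkeeping is verbatim the first paragraph of the proof of Proposition~\ref{Prop:fixable iff row and column sum vectors}, and the resulting relations do match condition (ii) (resp.\ (i)) of the fixability definition. Your argument is shorter, avoids the contradiction framework and the involution lemma entirely, and in fact uses only the simplicity of the single eigenvalue $k_1k_2$ of $AA^T$ and $A^TA$ rather than distinctness of all singular values, so it proves a marginally stronger statement; the paper's approach, by contrast, stays purely combinatorial and parallels the row/column-sum argument of Proposition~\ref{Prop:fixable iff row and column sum vectors}.
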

\begin{proof}
	Assume for contradiction that $A$ and $B$ are isomorphic and the remaining matrix is not fixable, say $B=P_0AQ_0$ where $P_0$ and $Q_0$ are permutation matrices. By Corollary \ref{Cor:automorphism and distinct sv}, $P_0^2=Q_0^2=I$. Let $\sigma_0$ and $\tau_0$ be permutations corresponding to $P_0$ and $Q_0$, respectively. Adopting the notation in \eqref{temp:matrices A and B rank 1} for $A$ and $B$, let $\alpha_1=\{1,\dots,k_1\}$, $\alpha_2=\{k_1+1,\dots,2k_1\}$, $\alpha_3=\{2k_1+1,\dots,n\}$, $\beta_1=\{1,\dots,k_2\}$, $\beta_2=\{k_2+1,\dots,2k_2\}$ and $\beta_3=\{2k_2+1,\dots,n\}$. Let $\wtA$ be the resulting matrix after applying the permutation $\sigma_0$ to rows of $A$, and let $B$ be the resulting matrix after applying $\tau_0$ to columns of $\wtA$. Consider 
	\begin{align*}
	A=\begingroup 
	\setlength\arraycolsep{2pt}\begin{bmatrix}
	0 & J_{k_1,k_2} & X_1\\
	J_{k_1,k_2} & 0 & X_2\\
	X_3 & X_4 & Y
	\end{bmatrix}\overset{\sigma_0}{\longrightarrow}\widetilde{A}=\begin{bmatrix}
	\widetilde{A}_{11} & \widetilde{A}_{12} & \widetilde{X}_1\\
	\widetilde{A}_{21} & \widetilde{A}_{22} & \widetilde{X}_2\\
	\widetilde{X}_3 & \widetilde{X}_4 & \widetilde{Y}
	\end{bmatrix}\overset{\tau_0}{\longrightarrow}B=\begin{bmatrix}
	J_{k_1,k_2} & 0 & X_1\\
	0 & J_{k_1,k_2} & X_2\\
	X_3 & X_4 & Y
	\end{bmatrix}.
	\endgroup
	\end{align*}
	
	Suppose to the contrary that $\sigma_0(a)\in\alpha_3$ for all $a\in\alpha_3$. We consider three cases: \begin{enumerate*}[label=(\alph*)]
		\item \label{temp:condition a}$\sigma_0(\alpha_1)\neq\alpha_1$ and $\sigma_0(\alpha_1)\neq\alpha_2$, \item \label{temp:condition b}$\sigma_0(\alpha_1)=\alpha_1$, \item\label{temp:condtion c} $\sigma_0(\alpha_1)=\alpha_2$.
	\end{enumerate*}
	Let the condition \ref{temp:condition a} hold. Then, $|\{x\in\alpha_1|\sigma_0(x)\in\alpha_1\}|>0$. So, neither $\begin{bmatrix}
	\widetilde{A}_{11}\\
	\widetilde{A}_{21}
	\end{bmatrix}$ nor $\begin{bmatrix}
	\widetilde{A}_{12}\\
	\widetilde{A}_{22}
	\end{bmatrix}$ contains the columns $\begin{bmatrix}
	\mathbf{1}_{k_1}\\
	\mathbf{0}_{k_1}
	\end{bmatrix}$ or $\begin{bmatrix}
	\mathbf{0}_{k_1}\\
	\mathbf{1}_{k_1}
	\end{bmatrix}$. Note that any cycle of $\tau_0$ is of length either $1$ or $2$. In order to obtain $B$ from $\widetilde{A}$, the first column $\tilde{\bx}_1$ of $\widetilde{A}$ must be swapped with some $j^\text{th}$ column $\tilde{\bx}_j$ of $\widetilde{A}$ for some $j\in \beta_3$. Then, the subvector $\tilde{\bx}_j[\alpha_1\cup\alpha_2]$ must be $\begin{bmatrix}\mathbf{1}_{k_1}\\
	\mathbf{0}_{k_1}
	\end{bmatrix}$. So, for the $j^{\text{th}}$ column $\bx_j$ of $A$, we have  $\bx_j[\alpha_1\cup\alpha_2]\overset{\sigma_0}{\longrightarrow}\begin{bmatrix}
	\mathbf{1}_{k_1}\\
	\mathbf{0}_{k_1}
	\end{bmatrix}$. Further, we can readily find $\begin{bmatrix}
	\mathbf{0}_{k_1}\\
	\mathbf{1}_{k_1}
	\end{bmatrix}\overset{\sigma_0}{\longrightarrow}\tilde{\bx}_1[\alpha_1\cup\alpha_2]$. Since $j\in \beta_3$, the $j^\text{th}$ columns of $A$ and $B$ must coincide, \textit{i.e.}, $\bx_j[\alpha_1\cup\alpha_2]=\tilde{\bx}_1[\alpha_1\cup\alpha_2]$. Hence, we obtain
	$$\begin{bmatrix}
	\mathbf{0}_{k_1}\\
	\mathbf{1}_{k_1}
	\end{bmatrix}\overset{\sigma_0}{\longrightarrow}\tilde{\bx}_1[\alpha_1\cup\alpha_2]=\bx_j[\alpha_1\cup\alpha_2]\overset{\sigma_0}{\longrightarrow}\begin{bmatrix}
	\mathbf{1}_{k_1}\\
	\mathbf{0}_{k_1}
	\end{bmatrix}.$$
	However, the mapping contradicts the fact that $\sigma_0^2$ is the identity. Therefore, the case \ref{temp:condition a} does not hold.
	
	Consider the case \ref{temp:condition b} that $\sigma_0(\alpha_1)=\alpha_1$. Since $\mathbf{1}^TX_1=\mathbf{1}^TX_2$, there are no columns in $\widetilde{A}[\alpha_1\cup\alpha_2,\beta_3]$ that are $\begin{bmatrix}
	\mathbf{1}_{k_1}\\
	\mathbf{0}_{k_1}
	\end{bmatrix}$ or $\begin{bmatrix}
	\mathbf{0}_{k_1}\\
	\mathbf{1}_{k_1}
	\end{bmatrix}$. This implies that $\tau_0(\beta_1)=\beta_2$, $\tau_0(\beta_2)=\beta_1$ and $\tau_0(\beta_3)=\beta_3$. Then, a pair $(P,Q)$ corresponds to the former in \eqref{permutation P Q mtx form}. By the argument of the proof in \ref{Prop:fixable iff row and column sum vectors}, $Y$ is fixable, which is a contradiction to the hypothesis. Finally, suppose that $\sigma_0(\alpha_1)=\alpha_2$. Since $\mathbf{1}^TX_1=\mathbf{1}^TX_2$, we have $\tau_0(\beta_1)=\beta_1$, $\tau_0(\beta_2)=\beta_2$ and $\tau_0(\beta_3)=\beta_3$. Using an analogous argument as for case \ref{temp:condition b}, we have a contradiction for the case \ref{temp:condtion c}. Therefore, there exists $a\in\alpha_3$ such that $\sigma_0(a)\notin\alpha_3$.
	
	We now suppose that there exists $j\in\alpha_3$ such that $\sigma_0(j)\notin\alpha_3$. Let $i=\sigma_0(j)$. Then, $i\in \alpha_1\cup\alpha_2$, say $i\in\alpha_1$. Let $\ba_i^T$ and $\ba_j^T$ be the $i^\text{th}$ and $j^\text{th}$ rows of $A$, respectively, where $\ba_i^T=\begin{bmatrix}
	\mathbf{0}_{k_1}^T & \mathbf{1}_{k_1}^T & \bx_1^T
	\end{bmatrix}$, $\ba_j^T=\begin{bmatrix}
	\bx_3^T & \bx_4^T & \by^T
	\end{bmatrix}$ and $\ba_j^T$ is compatible with the partition of $\ba_i^T$. Since each cycle of $\sigma_0$ is of length either $1$ or $2$, $\ba_j^T$ and $\ba_i^T$ are mapped to the $i^\text{th}$ row $\tilde{\ba}_i^T$ and the $j^\text{th}$ row $\tilde{\ba}_j^T$ of $\widetilde{A}$, respectively, after applying the row permutation $\sigma_0$. After permuting columns of $\tilde{\ba}_i^T$ and $\tilde{\ba}_j^T$ according to $\tau_0$, we must have $\bb_i^T$ and $\bb_j^T$ that are the $i^\text{th}$ and $j^\text{th}$ rows of $B$:
	$$
	\begin{bmatrix}
	\tilde{\ba}_i^T\\
	\tilde{\ba}_j^T
	\end{bmatrix}=\begin{bmatrix}
	\bx_3^T &  \bx_4^T & \by^T\\
	\mathbf{0}^T & \mathbf{1}^T & \bx_1^T
	\end{bmatrix}\overset{\tau_0}{\longrightarrow}\begin{bmatrix}
	\bb_i^T\\
	\bb_j^T
	\end{bmatrix}=\begin{bmatrix}
	\mathbf{1}^T & \mathbf{0}^T & \bx_1^T\\
	\bx_3^T &  \bx_4^T & \by^T
	\end{bmatrix}.
	$$
	Since $A$ and $B$ are Gram mates, the row sums of $\bx_3^T$ and $\bx_4^T$ are the same, say $\ell$. Permuting columns of $\begin{bmatrix}
	\tilde{\ba}_i^T\\
	\tilde{\ba}_j^T
	\end{bmatrix}$ and $\begin{bmatrix}
	\bb_i^T\\
	\bb_j^T
	\end{bmatrix}$ simultaneously, we can obtain
	$$
	Z_1=\begin{bmatrix}
	\mathbf{1}^T_\ell & \mathbf{0}^T_{k-\ell} & \mathbf{1}^T_\ell & \mathbf{0}^T_{k-\ell} & \by^T\\
	\mathbf{0}^T_\ell & \mathbf{0}^T_{k-\ell} & \mathbf{1}^T_\ell & \mathbf{1}^T_{k-\ell} & \bx_1^T
	\end{bmatrix}\overset{\tau_0}{\longrightarrow}Z_2=\begin{bmatrix}
	\mathbf{1}^T_\ell & \mathbf{1}^T_{k-\ell} & \mathbf{0}^T_\ell & \mathbf{0}^T_{k-\ell} & \bx_1^T\\
	\mathbf{1}^T_\ell & \mathbf{0}^T_{k-\ell} & \mathbf{1}^T_\ell & \mathbf{0}^T_{k-\ell} & \by^T
	\end{bmatrix}.
	$$
	Suppose that $\ell>0$. Note that if the $k\text{th}$ column of $Z_1$ for $k\in\beta_3$ is $\begin{bmatrix}
	1\\ 1
	\end{bmatrix}$, so is the $k\text{th}$ column of $Z_2$. Considering that each cycle in $\tau_0$ is of length either $1$ or $2$, the first column $\begin{bmatrix}
	1\\
	0
	\end{bmatrix}$ of $Z_1$ must be swapped with some $k_0^\text{th}$ column $\begin{bmatrix}
	1\\
	1
	\end{bmatrix}$ of $Z_2$ where $k_0\in\beta_3$. Then, $\begin{bmatrix}
	1\\
	0
	\end{bmatrix}$ is the $k_0^\text{th}$ column of $Z_2$, which is a contradiction. Similarly, applying an analogous argument of the case $\ell>0$ to the case $\ell=0$, one can obtain a contradiction. Therefore, our desired result is established.
	
	For the proof of the converse, apply Proposition \ref{Prop:fixable and isomorphic}.
\end{proof}


{\em Acknowledgement}: The authors are grateful to Geir Dahl at the University of Oslo for interesting discussions about some topics in this paper.


\end{document}